 \newcolumntype{L}{>{\raggedright\arraybackslash}X}%
 \newcolumntype{R}{>{\raggedleft\arraybackslash}X}%
 \newcolumntype{C}{>{\centering\arraybackslash}X}%
\def\UrlSpecials{\do\~{\kern -.15em\lower .7ex\hbox{~}\kern .04em}} \catcode`~=13 
\newcommand{\calF}{\mathcal{F}}
\newcommand{\ba}{\mathbf{a}}
\newcommand{\bA}{\mathbf{A}}
\newcommand{\bb}{\mathbf{b}}
\newcommand{\be}{\mathbf{e}}
\newcommand{\bI}{\mathbf{I}}
\newcommand{\bu}{\mathbf{u}}
\newcommand{\bx}{\mathbf{x}}
\newcommand{\bX}{\mathbf{X}}
\newcommand{\bepsilon}{\bm{\epsilon}}
\newtheorem{proposition}{Proposition}
\newtheorem{assumption}{Assumption}
\newcommand{\qednew}{\nobreak \ifvmode \relax \else
      \ifdim\lastskip<1.5em \hskip-\lastskip
      \hskip1.5em plus0em minus0.5em \fi \nobreak
      \vrule height0.75em width0.5em depth0.25em\fi}
\newcommand{\D}{\operatorname{D}_{KL}}
\renewcommand{\bm}{\mathbf}
\renewcommand{\boldsymbol}{\mathbf}
\newtheorem{thm}{Theorem}
\newtheorem{lem}{Lemma}
\newcommand{\target}{\bm{x}^*}
\newcommand{\init}{\bm{x}_{0}}
\newcommand{\RHS}{\bm{b}}
\newcommand{\Prob}[3][m]{#2\exp(-#3 #1)}
\newcommand{\Cforsmallest}[2][n]{ C_1 \frac{1}{#2} \log (\frac{\sqrt{2/\pi}}{#2})#1}
\newcommand{\Pforsmallest}[1]{\Prob{3}{c_2#1}}
\newcommand{\Pforabs}{ 2\exp(-cm)}
\crefname{equation}{}{}
\newcommand{\updateid}[1]{r_{#1}}
\newcommand{\smin}[1]{\sigma_{\min,#1}}
\crefname{algocf}{Algorithm}{Algorithms}
\newcommand{\e}[1]{\epsilon_{#1}}
\renewcommand{\ln}{\log}
\newcommand{\subsample}[2]{\{#1_j^{(#2)}\}_{j=1}^{D}}
\newcommand{\Uniform}{\operatorname{Uniform}(1, \ldots, m)}
\newcommand{\Bin}[2]{{\rm Bin}(#1,#2)}
\newcommand{\note}[1]{\mbox{\footnotesize (#1)}}
\newif\ifshow 
\newif\ifshowwt
\newcommand{\wt}[2][f]{%
  \ifshowwt
    \ifnum\pdfstrcmp{#1}{i}=0%
      \textcolor{purple}{wt: #2}
    \else%
      \footnote{ \textcolor{purple}{wt:#2}
      }
    \fi%
  \else%
  \fi%
}
\newcommand{\jr}[2][f]{%
  \ifshow
    \ifnum\pdfstrcmp{#1}{i}=0%
      \textcolor{blue}{#2}
    \else%
      \footnote{
      \textcolor{blue}{#2}
      }
    \fi%
  \else%
  \fi%
}
\newtheorem*{question*}{Question}
\newtheorem*{theorem*}{Theorem (Informal)}
\title{On Subsample Size of Quantile-Based Randomized Kaczmarz} 
\author{Jian-Feng Cai\thanks{Department of Mathematics,
Hong Kong University of Science and Technology. {\it email:} \url{jfcai@ust.hk}}\and Junren Chen\thanks{Department of Mathematics,
The University of Hong Kong. {\it email:} \url{chenjr58@connect.hku.hk}} \and Anna Ma\thanks{Department of Mathematics,
University of California, Irvine. {\it email:} \url{anna.ma@uci.edu}}\and Tong Wu\thanks{Department of Mathematics,
Hong Kong University of Science and Technology. {\it email:} \url{twubi@connect.ust.hk}}}
\date{\today}
\begin{document}

\maketitle

\begin{abstract}
Quantile-based randomized Kaczmarz (QRK) was recently introduced to efficiently solve sparsely corrupted linear systems $\bA \bx^*+\bm{\epsilon} = \bb$ [{\em SIAM J. Matrix Anal. Appl., 43(2), 605-637}], where $\bA\in \mathbb{R}^{m\times n}$ and $\bm{\epsilon}$ is an arbitrary $(\beta m)$-sparse corruption. However, all existing theoretical guarantees for QRK require quantiles to be computed using all $m$ samples (or a subsample of the same order), thus negating the computational advantage of Kaczmarz-type methods. This paper overcomes the bottleneck. We analyze a subsampling QRK, which computes quantiles from $D$ uniformly chosen samples at each iteration. Under some standard scaling assumptions on the coefficient matrix, we show that QRK with subsample size $D\ge\frac{C\log (T)}{\log(1/\beta)}$ linearly converges over the first $T$ iterations with high probability, where $C$ is some absolute constant. This subsample size is a substantial reduction from $O(m)$ in prior results. For instance, it translates into $O(\log(n))$ even if an approximation error of $\exp(-n^2)$ is desired.  
Intriguingly, our subsample size is also tight up to a multiplicative constant: if $D\le \frac{c\log(T)}{\log(1/\beta)}$ for some constant $c$, the error of the $T$-th iterate could be arbitrarily large with high probability. Numerical results are provided to corroborate our theory.  
\end{abstract}

\section{Introduction}\label{sec11}
Solving large-scale systems of linear equations is a fundamental task with widespread applications in scientific computing, data science, engineering, and related fields. 
In practice, the right-hand side of the linear system is often corrupted due to sensor failures, transmission errors, or adversarial attacks. As a consequence, 
efficient algorithms for solving corrupted linear systems have been highly sought after by applied mathematicians. This problem can be formulated as finding $\bx^*$ from  
\begin{equation} \label{eq: problem}
      \bA\mathbf{x}^* + \bm{\epsilon} =  \mathbf{b},
\end{equation}
where $\bA = [\ba_1,\cdots,\ba_m]^\top \in \mathbb{R}^{m \times n}$ denotes the coefficient matrix with rows $\ba_j \in \mathbb{R}^n$, $\mathbf{x}^* \in \mathbb{R}^n$ denotes the desired solution, $\mathbf{b}=[b_1, ..., b_m]^\top$ is the measurement vector that is corrupted by $(\beta m)$-sparse $\bm{\epsilon}\in \mathbb{R}^m$ whose nonzero entries can take {\it arbitrary} values, and $\beta \in (0,1)$ is small to ensure the well-posedness of the problem.

{Storing the entire linear system may not be possible in the highly overdetermined case with an extremely large $m$.} Kaczmarz methods have proven particularly advantageous in this regime as they only {require} one row per iteration. In particular, the Kaczmarz algorithm iteratively projects the iterate onto the solution hyperplane of a chosen row: suppose $\ba_j$ is chosen for updating the current iterate $\bx_k$, one simply projects $\bx_k$ to $\{\bu\in \mathbb{R}^n:\ba_j^\top\bu=b_j\}$, {i.e.}, 
\begin{align}
    \bx_{k+1} = \bx_k - \frac{(\ba_j^\top\bx_k-b_j)\ba_j}{\|\ba_j\|^2}.  
    \label{eq:rk}
\end{align}
{The rate of convergence of the Kaczmarz algorithm heavily relies on the ordering of the rows that the iterates are projected onto \cite{herman1993algebraic}. As a consequence, only convergence to the solution (for consistent systems) was guaranteed in its initial study~\cite{karczmarz1937angenaherte}.}

The randomized Kaczmarz (RK) method was introduced by Strohmer and Vershynin, who proved that RK converges linearly in expectation \cite{strohmer2009randomized} {when the linear system is {consistent} and rows are selected randomly with probability proportional to each row norm}.  Unfortunately, when the system is noisy and inconsistent, the best one can hope for is convergence in expectation to a ball containing the least squares solution, where the radius of the ball depends on the norm of the noise \cite{needell2010randomized,zouzias2013randomized,wang2015randomized}. When noise is very large and rows are severely corrupted, this convergence error horizon grows with it, even if only one row is corrupted. 

Recently, Haddock, Needell, Rebrova, and Swartworth~\cite{haddock_quantilebased_2022} developed a quantile-based randomized Kaczmarz (QRK) that can solve \textit{sparsely} corrupted linear systems. The core idea is to avoid projections onto corrupted rows by using quantile statistics. To update $\bx_k$ via the $j$-th row, the quantile of the residuals (or a randomly chosen subset) $\{|\ba_i^\top\bx_k-b_i|\}_{i=1}^m$ {is computed} and the projection is only made when $|\ba_{j}^\top\bx_k-b_{j}|$ is below the quantile.     
{To make this precise while simplifying notation}, we assume the rows of $\bA$ have unit Euclidean norm and introduce the QRK algorithm from \cite{haddock_quantilebased_2022} {in \cref{algo:quantileRK}}. 
In~\cref{algo:quantileRK}, {$\subsample{i}{k+1}$} is the set of $D$ uniformly chosen row indices for computing the quantile (referred to as {\it quantile subsample}) and $r_{k+1}$ is the index of the row for potential projection (referred to as {\it update sample}).

\begin{algorithm}[ht!]
    \caption{QuantileRK($q$) \cite{haddock_quantilebased_2022}\label{algo:quantileRK}}
    \KwIn{$\mathbf{A}$, $\mathbf{b}$,   $\mathbf{x}_0$, $q\in(0,1)$,  quantile subsample size $D$, iteration number $T$}
    \For{$k = 0$ \KwTo $T-1$}{
    sample $i_1^{(k+1)}, \ldots, i_D^{(k+1)} \sim \operatorname{Uniform}(1, \ldots, m)$ \;  
    sample $\updateid{k+1} \sim \operatorname{Uniform}(1, \ldots, m)$\;
    Compute $h_{r_{k+1}}:=\ba_{r_{k+1}}^\top\bx_k-b_{r_{k+1}}$\;
    \eIf{$|h_{r_{k+1}}| \leq q\text{-}\mathrm{quantile}\Big(\big\{|\ba_{i^{(k+1)}_j}^\top\bx_k-b_{i^{(k+1)}_j}|\big\}_{j=1}^D\Big)$}{
    $\bx_{k+1} = \bx_{k} - h_{r_{k+1}}\ba_{\updateid{k+1}}$\;
    }{
    $\bx_{k+1} = \bx_{k}$\;
    }
    }
    \Return{$\bx_T$}
\end{algorithm}

Under a class of random coefficient matrices (e.g., $\bA$ with i.i.d. rows uniformly drawn from the unit sphere), the authors of \cite{haddock_quantilebased_2022} established linear convergence{to the solution $\bx^*$ of the corrupt system} for QRK, similar to \cite{strohmer2009randomized}. Later, Steinerberger \cite{steinerberger2023quantile} provided an analysis over deterministic $\bA$. However, both these guarantees are only valid for QRK using {\it full samples}, meaning that the quantile over a random subsample of size $D$ in   \cref{algo:quantileRK} is replaced by the quantile of $\{|\ba_i^\top\bx_k-b_i|\}_{i=1}^m$. Consequently, existing {theoretical guarantees for} QRK need to access the entire {matrix $\bA$} at each iteration. This negates the major computational advantage of Karcmarz methods, i.e., requiring only one row per iteration, and a small part of the system overall.

On the other hand, \cref{algo:quantileRK} with $D \ll m $ works well in numerical simulations (e.g., see \cite{haddock_quantilebased_2022}). To the best of our knowledge, providing a theoretical guarantee to this small quantile subsample regime remains an important open question. Indeed, there has only been one work \cite{haddock_subsampled_2023} that analyzed a slightly different subsampled QRK. In particular,~\cite{haddock_quantilebased_2022} established linear convergence using a quantile subsample of size $\alpha m$ for some $\alpha\ge \max\{\frac{\beta}{q},\frac{\beta}{1-q}\}$. However, accessing a constant fraction of the system remains computationally demanding.

The goal of this paper is to identify the subsample size 
required for \cref{algo:quantileRK} to succeed. Unlike previously established  guarantees~\cite{strohmer2009randomized,haddock_quantilebased_2022,steinerberger2023quantile,haddock_subsampled_2023} which allow for infinite iterations, we restrict our attention to a given finite number of iterations (say, $T$). This is somewhat necessary: a bad step which projects the iterate onto a corrupted row will happen if sufficiently many iterations are run, and one such bad step can make the estimate arbitrarily distant from $\bx^*$. While this might appear as a limitation, in practice, one always runs finite iterations to achieve a desired approximation error. 
We now formally introduce the question that we focus on.

\begin{question*}
    Given a positive integer $T$, what is the size of the quantile subsample needed for \cref{algo:quantileRK} to linearly converge over the first $T$ iterations?   
\end{question*}

We denote the size of the quantile subsample (also called subsample size) by $D$ throughout the paper. Our main contributions are to provide upper and lower bounds, which match up to a multiplicative constant, for $D$. For simplicity, we assume that the rows of $\bA$ are uniformly distributed over the unit Euclidean sphere.
\footnote{As we shall see, our results only require certain properties on $\bA$ that are satisfied with high probability by the class of random matrices considered in \cite[Assumptions 1--2]{haddock_quantilebased_2022}; see our   \cref{assum1}--\cref{assump2}. Moreover, it is not hard to adapt our results to a deterministic manner, as done in \cite{steinerberger2023quantile}.} 
We first provide an upper bound showing that a uniformly chosen quantile subsample of size $D=O\big(\frac{\log T}{\log(1/\beta)}\big)$ is sufficient to guarantee linear convergence. Let us provide an informal statement here; see \cref{thm: main Gaussian} for the complete statement. 
\begin{theorem*}
   There exist some constants $C_1,c_2$ such that for any $T\in\mathbb{Z}_+$ and for arbitrary $(\beta m)$-sparse $\bepsilon$,   \cref{algo:quantileRK} with $D\ge \frac{C_1\log T}{\log(1/\beta)}$ satisfies 
   $
    \|\bx_T - \bx^*\|^2\le \big(1-\frac{c_2}{n}\big)^T \|\bx_0-\bx^*\|^2$ with $1-o(1)$ probability. 
\end{theorem*}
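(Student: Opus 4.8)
The plan is to exhibit a high‑probability ``favorable'' event on which the iterates contract at the advertised rate, and to argue that the subsample size $D$ is exactly what is needed to keep this event from failing over the first $T$ steps. I would write $\bz_k := \bx_k-\bx^*$, $\hat\bz_k := \bz_k/\|\bz_k\|$, let $r:=\updateid{k+1}$ be the update index and $\tau_k$ the $q$‑quantile threshold produced from the subsample at step $k$. Since the rows are unit‑norm, one step of \cref{algo:quantileRK} either leaves $\bz_k$ unchanged (rejection) or, upon acceptance, satisfies
\[
\|\bz_{k+1}\|^2 = \|\bz_k\|^2 - (\ba_r^\top\bz_k)^2 + \epsilon_r^2 ,
\]
so a good update ($\epsilon_r=0$) never increases the error and lowers it by $(\ba_r^\top\bz_k)^2$, whereas accepting a corrupted row injects $\epsilon_r^2$. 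The entire task thus reduces to (i) extracting enough progress from good updates and (ii) preventing any damaging corrupted update, uniformly over $k=0,\dots,T-1$.

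The catastrophic failure is that $\tau_k$ lands in the \emph{corrupted regime}, i.e.\ equals the residual of a heavily corrupted row, since then a corrupted update row with large $\epsilon_r$ can pass the test and blow up the error. The key observation I would exploit is that whether $\tau_k$ is a good‑row or a corrupted‑row residual is governed \emph{only} by how many of the $D$ sampled indices are corrupted — a quantity $K_k$ that is independent of the (random, evolving) iterate and distributed as $\mathrm{Bin}(D,\beta)$. Because good‑row residuals are all $\le\|\bz_k\|$ while heavily corrupted residuals are $\gg\|\bz_k\|$, the $\lceil qD\rceil$‑th order statistic is a good‑row residual — in particular $\tau_k\le\|\bz_k\|$ — whenever $K_k<(1-q)D$. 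A Chernoff bound then gives $P(K_k\ge(1-q)D)\le\exp\!\big(-c(1-q)D\log(1/\beta)\big)$ for small $\beta$, and a union bound over the $T$ steps yields
\[
P\big(\exists\,k<T:\ K_k\ge(1-q)D\big)\ \le\ T\exp\!\big(-c(1-q)D\log(1/\beta)\big),
\]
which is $o(1)$ precisely when $D\ge \tfrac{C_1\log T}{\log(1/\beta)}$. This counting argument is the origin of the claimed subsample size, and the binomial tail it isolates is what the paper's matching lower bound confirms to be the true bottleneck.

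On the favorable event $\{K_k<(1-q)D\ \forall k<T\}$ we have $\tau_k\le\|\bz_k\|$, so every corrupted row with $|\epsilon_r|>2\|\bz_k\|$ is rejected, and the only surviving corrupted updates have $|\epsilon_r|\lesssim\|\bz_k\|$; by anti‑concentration of $\ba_r^\top\hat\bz_k$ on the sphere (the regularity underlying \cref{thm: main Gaussian}) these are accepted only with exponentially small probability once $|\epsilon_r|$ is a constant multiple of $\|\bz_k\|$, so their expected contribution is dominated by the progress term. A matching \emph{lower} bound on $\tau_k$ — which I expect to need only $D=O(1)$, hence not the binding constraint — guarantees that an $\approx q$ fraction of good update rows are accepted, and the sphere geometry supplies $\mathbb{E}\big[(\ba_r^\top\bz_k)^2\,\mathbf{1}\{\text{accept good }r\}\big]\ge \tfrac{c_2'}{n}\|\bz_k\|^2$. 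Combining progress with the lower‑order corruption error gives a one‑step drift $\mathbb{E}[\|\bz_{k+1}\|^2\mid\mathcal F_k]\le(1-\tfrac{a}{n})\|\bz_k\|^2$ with $a>c_2$. Stopping the trajectory at the first bad step and applying a supermartingale/Markov argument to $\|\bz_{k}\|^2/(1-c_2/n)^{k}$ — with the slack $a>c_2$ absorbing the deviation — converts the drift into the multiplicative bound $\|\bz_T\|^2\le(1-c_2/n)^T\|\bz_0\|^2$; intersecting with the $1-o(1)$ favorable event finishes the argument.

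The hard part will be the contraction step on the favorable event, not the counting bound for $D$. I must show that restricting updates to rows passing a \emph{subsampled} quantile offers the same threshold guarantees as the full‑sample quantile of prior work: this demands two‑sided control of $\tau_k$ — an upper bound to reject heavy corruptions (secured by $K_k<(1-q)D$) and a lower bound to admit enough good rows — while the threshold, the acceptance decision, and the progress all depend on the random, evolving iterate $\bz_k$, and the corruption is adversarial and fixed in advance. The delicate point is that a corrupted row with $|\epsilon_r|$ comparable to $\|\bz_k\|$ does harm only if it is accepted, which forces $\ba_r$ to be nearly aligned with the \emph{current} $\hat\bz_k$ — an alignment the adversary cannot arrange ahead of time and that sphere anti‑concentration renders exponentially unlikely. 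Quantifying this balance uniformly in $k$, and verifying that the lower‑tail requirement on $\tau_k$ is genuinely milder so that the upper tail alone sets $D\asymp\log T/\log(1/\beta)$, is the technical heart of the proof.
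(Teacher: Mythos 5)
Your high-level architecture matches the paper's: a per-step Chernoff bound plus a union bound over $T$ iterations is what sets $D\gtrsim \log T/\log(1/\beta)$, a stopping-time argument iterates a one-step drift restricted to the favorable event (this is exactly \cref{lem: induction}), and you are right that the quantile \emph{lower} bound is only needed with constant probability, so it does not constrain $D$. The genuine gap is in your damage control for corrupted update rows. Your favorable event $\{K_k<(1-q)D\}$ yields only $\tau_k\le\max_{i\in B^c}|\ba_i^\top\bz_k|\le\|\bz_k\|$, and this bound is too weak in the constant-$\beta$ regime the theorem covers: an accepted corrupted row changes the squared error by $\epsilon_r^2-(\ba_r^\top\bz_k)^2$, which under your event can be of order $\|\bz_k\|^2$, and the update row is corrupted with probability $\beta$; the permitted expected per-step damage of order $\beta\|\bz_k\|^2$ then swamps the $O(\|\bz_k\|^2/n)$ progress unless $\beta=O(1/n)$.

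Your patch — that accepting a row with $|\epsilon_r|$ comparable to $\|\bz_k\|$ forces $\ba_r$ to align with $\hat\bz_k$, which sphere concentration makes exponentially unlikely — is circular: it is valid only once one already knows $\tau_k\ll\|\bz_k\|$. For a moderate corruption, say $|\epsilon_r|=\|\bz_k\|/2$, acceptance requires no alignment at all ($\ba_r^\top\bz_k\approx0$ suffices); it requires only $\tau_k\gtrsim\|\bz_k\|/2$, which your event does not exclude (an adversary can plant some huge corruptions to inflate the subsampled quantile and some moderate ones to do the damage). The mechanism that actually closes this is a sharper high-probability upper bound on the quantile itself, $\tau_k\le\tilde{Q}_{q'}(\bx_k)$ with $1-q'=\Theta(\beta)$, which by \cref{lem: A and ideal quantile} gives $\tau_k=O\bigl(\|\bz_k\|/\sqrt{\beta n}\bigr)$. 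This is precisely the paper's event $S_{k+1}$ from \cref{lem: sub quantile}: by definition of the uncorrupted quantile, the set of rows whose residual exceeds $\tilde{Q}_{q'}(\bx_k)$ has mass at most $\beta+(1-q')=\Theta(\beta)$, so the Chernoff exponent for $(1-q)D$ subsample indices landing there is still $\Theta\bigl((1-q)\log(1/\beta)\bigr)D$ — i.e.\ the sharper event costs nothing in the order of $D$, but it is what makes the bookkeeping work. With it, the expected damage $\beta\bigl(\tau_k^2+2\tau_k|\ba_r^\top\bz_k|\bigr)$ becomes $O(\|\bz_k\|^2/n)$ after also invoking the $\ell_1$ spectral bound of \cref{lem: absolute sum bound} over the corrupted rows, with a prefactor that the contraction beats once $\beta$ is a small constant. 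Separately, your progress step asserts that an $\approx q$ fraction of good rows is accepted, but to apply \cref{Stbound} one must exhibit a \emph{fixed} set of $\Omega(m)$ uncorrupted rows all guaranteed acceptance under the quantile lower bound (the paper's set $S$ in \cref{eq:def_S_set}); this construction is missing from your sketch, though it is a smaller omission than the one above.
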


In the most interesting regime where $\beta$ is some positive constant, we need a quantile subsample of size $O(\log T)$ to run $T$ iterations. As a consequence, to achieve a squared $\ell_2$ approximation error of $\varepsilon\|\bx_0-\bx^*\|^2$, we can set $T=\frac{n\log(1/\varepsilon)}{c_2}$ and hence require a subsample size of $$D=O\big(\log n +\log\log(1/\varepsilon)\big).$$ Such subsample size reduces to $D=O(\log n)$ that is substantially smaller than $O(m)$ even under $\varepsilon=\exp(-n^2)$, an approximation error that is more than sufficient for most practical applications. \footnote{In fact, our $D=O(\log n +\log\log(1/\varepsilon))$ improves on $O(m)$ so long as $\varepsilon\ge \exp(-\exp(m))$.}

It is interesting to note that the subsample size can be further reduced if $\beta = o(1)$. Let us consider a sublinear number of corruptions, i.e., $\beta = \Theta(m^{-\xi})$ for some $\xi\in (0,1]$. To achieve a squared $\ell_2$ approximation error of $\varepsilon\|\bx_0-\bx^*\|^2$, we only require a subsample size of $D=O(\frac{\log n+\log\log\varepsilon^{-1}}{\xi\log m})$. If additionally $\varepsilon \ge \exp(-n^2)$, then we only require $D=O(\frac{\log n}{\xi\log m})=O(\xi^{-1})$, meaning that a constant subsample size (depending only on $\xi$) suffices under any $\xi\in(0,1]$.

We also provide a converse result which indicates that a subsample size at the order of $\frac{\log T}{\log(1/\beta)}$ is necessary for   \cref{algo:quantileRK} to converge linearly over the first $T$ iterations. The following is an informal version of   \cref{thm:lower}.
\begin{theorem*}
    Under some mild scaling assumptions on $(\beta,T)$, there exists an absolute constant $c_1$ such that for some $(\beta m)$-sparse corruption $\bepsilon$, \cref{algo:quantileRK} with $D\le \max\big\{\frac{c_1\log T}{\log(1/\beta)},1\big\}$ returns $\bx_T$ with arbitrarily large $\|\bx_T-\bx^*\|^2$ with $1-o(1)$ probability. 
\end{theorem*}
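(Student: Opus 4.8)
The plan is to exhibit a single adversarial corruption under which, with probability $1-o(1)$, \cref{algo:quantileRK} performs at least one projection onto a corrupted row, and then to show that any such projection inflates the error so severely that it cannot be undone within the remaining iterations. Fix a set $S^*\subset\{1,\dots,m\}$ with $|S^*|=\beta m$ and set $\epsilon_i=M$ for $i\in S^*$ and $\epsilon_i=0$ otherwise, where $M$ is a large parameter chosen at the very end (taking $M\to\infty$ is what yields the ``arbitrarily large'' error). Write $\bu_k:=\bx_k-\bx^*$ and $g_i:=\ba_i^\top\bu_k$, so the residual of row $i$ is $|h_i|=|g_i-\epsilon_i|$: for clean rows $|h_i|=|g_i|$ is of order $\|\bu_k\|/\sqrt n$, whereas for corrupted rows $|h_i|\approx M$. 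The key deterministic fact is the update identity $\|\bu_{k+1}\|^2=\|\bu_k\|^2-(\ba_{r_{k+1}}^\top\bu_k)^2+\epsilon_{r_{k+1}}^2$ on a projection step: a projection onto any corrupted row adds $M^2$ and forces $\|\bu_{k+1}\|\gtrsim M$.

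Define the bad event $E_k$ as the conjunction of (i) all $D$ quantile-subsample indices lie in $S^*$, (ii) the update index $r_{k+1}$ lies in $S^*$, and (iii) the acceptance test passes. Since the indices are drawn independently and uniformly, $\Pr[\text{(i)}]=\beta^D$ and $\Pr[\text{(ii)}]=\beta$. For (iii), condition on (i)--(ii): all subsample residuals equal $M-g_{i_j}$ and the update residual equals $M-g_{r_{k+1}}$ (using $M\gg\|\bu_k\|$), so the test reduces to $g_{r_{k+1}}\ge(1-q)\text{-quantile}\big(\{g_{i_j}\}_{j=1}^D\big)$. As $g_{r_{k+1}},g_{i_1},\dots,g_{i_D}$ are i.i.d.\ samples of $\ba^\top\bu_k$ over $S^*$, exchangeability makes the rank of $g_{r_{k+1}}$ uniform, so $\Pr[\text{(iii)}\mid\text{(i)},\text{(ii)}]\ge c_q$ for a constant $c_q=c_q(q)>0$ \emph{independent of $\bu_k$}. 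Hence $\Pr[E_k\mid\calF_k]\ge c_q\beta^{D+1}$ on the event that no bad step has occurred before $k$ (where $\|\bu_k\|\le\|\bu_0\|$ guarantees $M\gg\|\bu_k\|$ once $M\ge2\|\bu_0\|$). Multiplying survival probabilities gives $\Pr[\text{no bad step in }[0,T)]\le(1-c_q\beta^{D+1})^T\le\exp(-c_qT\beta^{D+1})$, which is $o(1)$ precisely when $T\beta^{D+1}\to\infty$; the hypothesis $D\le c_1\log T/\log(1/\beta)$ gives $\beta^{D+1}\ge\beta T^{-c_1}$, so $T\beta^{D+1}\ge\beta T^{1-c_1}\to\infty$ for $c_1<1$ under the stated mild scaling of $(\beta,T)$.

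It remains to show that one corrupted projection is not washed out by the remaining (at most $T$) iterations. From the update identity, every iteration satisfies $\|\bu_{k+1}\|^2\ge\|\bu_k\|^2\big(1-(\ba_{r_{k+1}}^\top\bu_k)^2/\|\bu_k\|^2\big)$, regardless of whether the step is clean, corrupted, or skipped (the corrupted case even adds $M^2$). Writing $\gamma_k:=\ba_{r_{k+1}}^\top\bu_k/\|\bu_k\|$, this telescopes to $\|\bu_T\|^2\ge\|\bu_{\tau+1}\|^2\prod_{k>\tau}(1-\gamma_k^2)\ge(M/2)^2\prod_{k=0}^{T-1}(1-\gamma_k^2)$, where $\tau<T$ is the first bad step. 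Under \cref{assum1}--\cref{assump2} the quantities $\gamma_k^2$ obey a uniform-over-directions control, so with probability $1-o(1)$ one has $\prod_{k=0}^{T-1}(1-\gamma_k^2)\ge\rho_0(T,n)>0$, a bound \emph{independent of $M$}. Combining, $\|\bx_T-\bx^*\|\ge\tfrac12\sqrt{\rho_0}\,M$ with probability $1-o(1)$, which exceeds any prescribed $R$ once $M\ge 2R/\sqrt{\rho_0}$.

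\textbf{Main obstacle.} The delicate point is the recovery bound: I must rule out that a long run of favorable clean projections after time $\tau$ shrinks the inflated error back down, and this requires controlling $\prod_{k}(1-\gamma_k^2)$ \emph{uniformly over all iterates that the (possibly already corrupted) trajectory can reach}, not merely for a fixed direction. The scale-invariance of $\gamma_k$ is what decouples this estimate from $M$ and lets a single high-probability event serve for every $M$; establishing it cleanly from the matrix assumptions, together with verifying that the conditional lower bound $\Pr[E_k\mid\calF_k]\ge c_q\beta^{D+1}$ holds on the survival event, is the technical crux. The scaling hypotheses on $(\beta,T)$ enter only to guarantee $T\beta^{D+1}\to\infty$ and that $T$ is not so large relative to $n$ that $\rho_0$ degenerates.
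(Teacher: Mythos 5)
Your first half (forcing a projection onto a corrupted row) is essentially the paper's argument: your event $E_k$ with probability $\gtrsim c_q\beta^{D+1}$ plays the role of the paper's \cref{lem: prob-disaster-event}, which gets $(\beta/2)^{D+1}$ by a cleaner deterministic device (split $B$ into a low-residual half $B_1$ and a high-residual half $B_2$, and ask the update index to land in $B_1$ while the whole quantile subsample lands in $B_2$), avoiding your exchangeability/tie issues and working for arbitrary corruption values rather than the large-$M$ linearization $|M-g|\approx M-g$. The error-inflation step is the paper's \cref{lem: corrupted error}. Up to here your proposal is sound.

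The genuine gap is in the recovery bound, and you have correctly flagged it as the crux without resolving it. You take $\tau$ to be the \emph{first} corrupted projection and then need $\prod_{k>\tau}(1-\gamma_k^2)\ge\rho_0(T,n)>0$ with probability $1-o(1)$ over up to $T$ subsequent clean steps. The claimed ``uniform-over-directions control'' of $\gamma_k^2$ cannot exist: $\sup_{\bu\neq 0}\max_i |\ba_i^\top\bu|/\|\bu\|=1$ (take $\bu=\ba_i$), so no deterministic uniform bound keeps $1-\gamma_k^2$ away from $0$. The only available control is conditional and probabilistic, namely the paper's \cref{lem: fast-contraction}: with probability $1-\kappa$ over the uniform choice of $r_{k+1}$, one has $\gamma_k^2\le C/(\kappa n)$. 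A union bound over the $N$ post-corruption steps then costs $N\kappa$ in probability while requiring $C/(\kappa n)<1$, i.e.\ $\kappa>C/n$; these are compatible only when $N=o(n)$. Since the theorem's scaling assumptions (\cref{scaling}) explicitly allow $T$ polynomially larger than $n$, controlling all $T-\tau$ steps after the \emph{first} bad step is infeasible by this route, and your parenthetical hope that ``$T$ is not so large relative to $n$'' is not an assumption you are entitled to. The paper's fix is structural: it defines $k^*$ as the \emph{last} corrupted projection among the first $T$ steps, shows $k^*\ge T-\lceil n/\log n\rceil$ with high probability (this is where the stronger scaling conditions in \cref{scaling} are consumed), and then only needs to control $\lfloor n/\log n\rfloor$ remaining steps, applying \cref{lem: fast-contraction} with $\kappa=2C/n$ and a union bound to get the factor $(1/2)^{\lfloor n/\log n\rfloor}$ with probability $1-O(1/\log n)$. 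To repair your proof you would either need to adopt this last-bad-step device, or supply a genuinely new concentration argument for $\sum_k\log(1-\gamma_k^2)$ over $T\gg n$ steps that tolerates the heavy lower tail of $\log(1-\gamma_k^2)$ — which is precisely the difficulty the paper's construction is designed to avoid.
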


Our proof techniques for the upper bound depart from existing analysis \cite{haddock_quantilebased_2022,steinerberger2023quantile,haddock_subsampled_2023} in many ways.   In particular, we first develop two-sided probabilistic bounds on the subsampled quantile. We then rely on the upper bound to control the impact of accepting a corrupted row, and utilize the lower bound to establish the contraction after accpeting an uncorrupted row. Both of these two steps require a number of new ideas, especially in iterating an appropriate one-step contraction and constructing a sufficiently large set of acceptable uncorrupted rows. See \cref{sec:newideas} for details. On the other hand, we build the lower bound by ensuring the projection   onto some corrupted row in some step and then showing the subsequent iterations cannot reduce the approximation error too much.

 The paper is organized as follows.  We introduce the notation and some useful technical tools in  
 \cref{sec:prelim}. We present our main theoretical results and their proofs in   \cref{sec:main}. We also provide a set of numerical examples in \cref{sec:numerics} which support our theoretical findings. Lastly, we provide concluding remarks in \cref{sec:conclusion}.   

\section{Preliminaries}

\label{sec:prelim}
    In this section, we present notation that will be used throughout for easy referencing. In addition to notation, we present key technical tools and summarize preliminary observations from prior works when \cref{algo:quantileRK} chooses a corrupted row in \cref{sec: Projection onto Corrupted Rows} and a non-corrupted row in \cref{sec:  Projection onto Uncorrupted Rows}. Some properties of the coefficient matrix $\bA$ are discussed in \cref{matrixA}.

    We adopt the convention $[m] = \{1, \dots, m\}$ and for simplicity, we assume that the proportions of $m$ (such as $qm$, $\beta m$, $\alpha m$, where $\alpha, \beta, q \in [0,1]$) and $qD$ are positive integers. If not, rounding can be applied without significantly affecting any results. 
    The inner product between $\bm{a},\bm{b}\in \mathbb{R}^n$ is defined as $\langle \bm{a}, \bm{b} \rangle = \bm{a}^\top \bm{b}$, and the norm $\|\ba\| := \sqrt{\langle \bm{a}, \bm{a} \rangle}$.  
    We work with natural logarithm $\log(\cdot)$ with base $e$. 
    The Kullback–Leibler (KL) divergence between two Bernoulli distributions with parameters $p,q\in[0,1]$, is defined as $${\rm D_{KL}}(p\|q)= p\log\Big(\frac{p}{q}\Big)+(1-p)\log\Big(\frac{1-p}{1-q}\Big).$$ 
    We denote absolute constants by $C,C_i,c,c_i$ whose values may vary from line to line. We will adopt standard complexity notation, by writing $I_1=O(I_2)$ if $I_1\le CI_2$ for some absolute constant $C$, $I_1=\Omega(I_2)$ if $I_1\ge cI_2$ for some absolute constant $c$, and $I_1=\Theta(I_2)$ if $I_1=O(I_2)$ and $I_1=\Omega(I_2)$ simultaneously hold. We will generically use $o(1)$ to denote quantities that tend to $0$ when $m,n,T\to\infty$. 

We assume the rows of $\bm{A} \in \mathbb{R}^{m \times n}$ are independent random vectors uniformly distributed on the unit sphere $\mathbb{S}^{n-1}$. The Frobenius norm of $\bm{A}$ is denoted by $\|\bm{A}\|_F$. The smallest non-zero singular value and the largest singular value of $\bm{A}$ are $\sigma_{\min}(\bm{A})$ and $\sigma_{\max}(\bm{A})$, respectively. For any $S \subset [m]$, $\bm{A}_S \in \mathbb{R}^{|S| \times n}$ denotes the submatrix of $\bm{A}$ with rows indexed by $S$.

The $q$-quantile of a {\it multi-set} $\{z_1,z_2, \dots, z_N\}$ (in which $z_i=z_j$ is possible for $i\ne j$) is defined as $z^*_{\lfloor qN\rfloor}$ where we let $z^*_1\le z_2^*\le\cdots \le z_N^*$ be the non-decreasing rearrangement of the $N$ elements. (If $qN<1$, we will instead define the $q$-quantile as $z_1^*$.) 
Let  $B = \{ i: \epsilon_i \neq 0 \}$ be the index set of the corrupted measurements in the right-hand side $\RHS$, and let $B^c:=[m]\setminus B$ be its complement.
At runtime, we can only use the possibly corrupted $\RHS$ to compute the quantile, where the quantile is denoted by $$Q_q(\bm{x}, S) := \text{$q$-quantile of } \left\{ \left| b_i - \langle \bm{a}_i, \bm{x} \rangle \right| : i \in S \right\},$$ 
for a given multi-set $S$.
We shall denote the full-sample quantile by $$Q_q(\bm{x}) := Q_q(\bm{x}, [m]).$$ 
If $b_i$ is uncorrupted, then $b_i=\langle\ba_i,\bx^*\rangle$ and hence the absolute residual for the $i$th row can be expressed as $|\langle\ba_i,\bx-\bx^*\rangle|$. Hence, quantiles computed from uncorrupted measurements are denoted $$\tilde{Q}_q(\bm{x}, S) := \text{$q$-quantile of } \left\{ \left| \langle \bm{x} - \bm{x}^*, \bm{a}_i \rangle \right| : i \in S \right\}.$$  Similarly,   the full-sample uncorrupted quantile will be written as $$\tilde{Q}_q(\bm{x}) := \tilde{Q}_q(\bm{x}, [m]).$$
We denote by $\bX_k$ the QRK iterate after $k$ steps when we want to emphasize its randomness. We may also write it as $\bx_k$ when working with a deterministic realization of $\bX_k$. 
The error vector at iteration $k$ is defined as $\bm{e}_k := \bm{x}_k - \bm{x}^*$. 

We now proceed to introduce the technical tools used in our paper. At the center of our analysis are various binomial variables; hence, our major technical tool is the following tight Chernoff bound. We emphasize that the Chernoff bound is tighter than the one used in \cite{haddock_quantilebased_2022,steinerberger2023quantile}, and such tightness is essential for capturing the role of $\beta$ in the subsample size. 

\begin{lem}  \label{thm: chernoff}
Let $X\sim \Bin{N}{q}$, then we have the Chernoff bound: for $k \leq Nq$,
    \[
        \mathbb{P}\left(X \leq k\right) \leq \exp\left(-N \cdot \D\left(\frac{k}{N} \| q\right)\right).
    \]
    Similarly, for $k \geq Nq$,
    $
        \mathbb{P}(X \geq k) \leq \exp\left(-N \cdot \D\left(\frac{k}{N} \| q\right)\right).
    $
\end{lem}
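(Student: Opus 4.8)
The plan is to apply the standard Chernoff (exponential Markov) method and then verify that the optimized exponent collapses exactly to the relative entropy. I focus first on the lower tail $\mathbb{P}(X\le k)$ with $k\le Nq$. Writing $X=\sum_{i=1}^N X_i$ as a sum of i.i.d.\ Bernoulli$(q)$ variables, for any $t\ge 0$ I would bound
\[
\mathbb{P}(X\le k)=\mathbb{P}\big(e^{-tX}\ge e^{-tk}\big)\le e^{tk}\,\mathbb{E}[e^{-tX}]=e^{tk}\big(qe^{-t}+1-q\big)^N,
\]
using Markov's inequality and then factoring the moment generating function by independence.

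Next I would optimize this bound over $t\ge 0$. Setting $p=k/N$, the quantity inside the exponential is $\exp\!\big(N(tp+\log(qe^{-t}+1-q))\big)$; differentiating the bracketed expression in $t$ and setting the derivative to zero gives the stationarity condition $e^{-t^\star}=\frac{p(1-q)}{q(1-p)}$. Since $p\le q$ forces $e^{-t^\star}\le 1$, i.e.\ $t^\star\ge 0$, the optimizer respects the feasibility constraint $t\ge 0$. Substituting $t^\star$ back, one checks the simplification $qe^{-t^\star}+1-q=\frac{1-q}{1-p}$, so the exponent reduces to
\[
p\log\frac{q}{p}+(1-p)\log\frac{1-q}{1-p}=-\D\Big(\frac{k}{N}\,\Big\|\,q\Big),
\]
which yields the claimed bound $\mathbb{P}(X\le k)\le\exp\big(-N\,\D(k/N\|q)\big)$.

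For the upper tail I would invoke symmetry rather than repeat the optimization. Since $N-X\sim\Bin{N}{1-q}$ and $\{X\ge k\}=\{N-X\le N-k\}$, with $N-k\le N(1-q)$ holding exactly when $k\ge Nq$, the lower-tail bound just established (applied to $N-X$, parameter $1-q$, threshold $N-k$) gives $\mathbb{P}(X\ge k)\le\exp\big(-N\,\D((N-k)/N\|1-q)\big)$. The elementary identity $\D(1-p\|1-q)=\D(p\|q)$ for the Bernoulli relative entropy, which follows immediately by swapping the two terms in the definition, then converts this back into $\exp\big(-N\,\D(k/N\|q)\big)$.

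The computation is entirely routine, so there is no genuine obstacle here; the only step requiring care is the algebraic substitution of the optimal tilt $t^\star$ to confirm that the exponent collapses \emph{exactly} to $-\D(k/N\|q)$, rather than to a looser Hoeffding-type relaxation. It is precisely this sharp relative-entropy rate — and not a Gaussian surrogate — that the subsequent arguments exploit to track the dependence of the subsample size on $\beta$, as emphasized before the statement.
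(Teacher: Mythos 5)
Your proof is correct. Note, however, that the paper offers no proof of this lemma at all: it is stated as a standard tool (the classical Chernoff--Hoeffding bound), so there is nothing paper-side to compare against. Your exponential-tilting argument --- Markov applied to $e^{-tX}$, optimization giving $e^{-t^\star}=\frac{p(1-q)}{q(1-p)}$, the exact collapse of the exponent to $-\D(k/N\,\|\,q)$, and the reduction of the upper tail to the lower tail via $N-X\sim\mathrm{Bin}(N,1-q)$ together with $\D(1-p\,\|\,1-q)=\D(p\,\|\,q)$ --- is precisely the canonical derivation of this result, and it correctly delivers the sharp relative-entropy rate (rather than a Hoeffding-type surrogate) that the paper's later arguments need. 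The only pedantic caveat is the endpoint $k=0$, where the optimal tilt runs off to $t\to\infty$; there the bound still holds (with equality, $\mathbb{P}(X=0)=(1-q)^N$) by taking the infimum or by direct computation, so this does not affect correctness.
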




\subsection{Projection onto Corrupted Row}
\label{sec: Projection onto Corrupted Rows}
In \cref{algo:quantileRK}, it is possible that a corrupted row is selected for updating the iterate. However, since the subsampled quantile bounds the residual, the resulting error increase can be effectively controlled, as shown in \cref{lem: quantile bound error}.
\begin{lem} \label{lem: quantile bound error}
Given $\bx_k$, the update sample $\updateid{k+1}=i$, and the quantile subsample $\subsample{i}{k+1} = \subsample{(i')}{k+1}$. If $Q(\bx_{k},\subsample{(i')}{k+1}) \leq m_Q$, then
\begin{equation}
 \left\|\be_{k+1}\right\|^2 \leq \left\|\bm{e}_k\right\|^2 + m_Q^2 + 2 m_Q \left|\left\langle \bm{e}_k, \bm{a}_{i} \right\rangle\right|.  
\label{eq:quantile-error-bound}
\end{equation}
\end{lem}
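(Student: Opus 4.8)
The plan is to observe that \cref{lem: quantile bound error} follows from a direct expansion of the Kaczmarz update, with the acceptance test supplying exactly the bound on the step size needed to control the cross term. First I would recall the two possible outcomes of iteration $k+1$ in \cref{algo:quantileRK} with update index $\updateid{k+1}=i$: either the residual $h_i := \ba_i^\top\bx_k - b_i$ passes the quantile test and we set $\bx_{k+1} = \bx_k - h_i\ba_i$, or it fails and $\bx_{k+1} = \bx_k$. In the latter case $\be_{k+1} = \be_k$, so the claimed inequality holds trivially, since the extra terms $m_Q^2 + 2 m_Q|\langle\be_k,\ba_i\rangle|$ are nonnegative. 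Thus it suffices to treat the accepting case.

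Next, in the accepting case I would subtract $\bx^*$ from the update rule to obtain $\be_{k+1} = \be_k - h_i\ba_i$, and then expand the squared norm. Using that the rows have unit Euclidean norm, i.e., $\|\ba_i\| = 1$, this yields
\begin{equation*}
\|\be_{k+1}\|^2 = \|\be_k\|^2 - 2 h_i\langle\be_k,\ba_i\rangle + h_i^2.
\end{equation*}
The crucial point is that although row $i$ may be corrupted (so that $h_i$ need not equal $\langle\ba_i,\be_k\rangle$), its magnitude is still controlled: the acceptance test reads $|h_i| \le Q(\bx_k,\subsample{(i')}{k+1})$, and by hypothesis $Q(\bx_k,\subsample{(i')}{k+1}) \le m_Q$, hence $|h_i| \le m_Q$.

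Finally I would bound the two non-leading terms using $|h_i| \le m_Q$: the quadratic term satisfies $h_i^2 \le m_Q^2$, and the cross term satisfies $-2 h_i\langle\be_k,\ba_i\rangle \le 2|h_i|\,|\langle\be_k,\ba_i\rangle| \le 2 m_Q|\langle\be_k,\ba_i\rangle|$. Substituting these two estimates into the expansion above gives exactly \eqref{eq:quantile-error-bound}.

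This argument is entirely elementary, and I do not anticipate any genuine obstacle beyond careful bookkeeping: the quantile threshold is designed precisely so that an accepted step cannot move the iterate by more than $m_Q$ along the direction $\ba_i$. The only subtlety worth flagging is that the bound does not require row $i$ to be uncorrupted — the estimate holds uniformly for any accepted update — which is exactly what will make it useful later for controlling the damage incurred when \cref{algo:quantileRK} occasionally projects onto a corrupted row.
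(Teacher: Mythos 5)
Your proof is correct and follows essentially the same route as the paper's: split into the accepted/rejected cases, expand $\|\be_k - h_i\ba_i\|^2$ using $\|\ba_i\|=1$, and invoke the acceptance test together with the hypothesis $Q(\bx_k,\subsample{(i')}{k+1}) \leq m_Q$ to bound $|h_i|$ and hence the quadratic and cross terms. Your treatment of the rejected case (noting the added terms are nonnegative) is just a slightly more explicit rendering of the same observation in the paper.
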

\begin{proof}
In   \cref{algo:quantileRK},  we can bound the residual of the update sample $r_{k+1}=i$ as    $\left|h_{i}\right| = \left|\left\langle \bx_k, \ba_{i} \right\rangle - b_{i
        }\right| \leq Q(\bm{x}_k,\subsample{(i')}{k+1})\leq m_Q$. Thus, we have  
        $$ 
            \begin{aligned}
                 \left\|\be_{k+1}\right\|^2 & \ \ \  =  \left\|\be_{k} - h_i \ba_i\right\|^2 \\
                 & \begin{cases}
                      \leq
                      \left(\left\|\bm{e}_k\right\|^2 + Q\left(\bm{x}_k,\subsample{(i')}{k+1}\right)^2 + 2 Q\left(\bm{x}_k,\subsample{(i')}{k+1}\right) \left|\left\langle\bm{e}_k, \bm{a}_{i}\right\rangle\right| \right) & \text{if accepted}      \\
                       =  \left\|\bm{e}_k\right\|^2                                                                                                                                      & \text{if not accepted}
                   \end{cases} \\
                 & \ \ \ \leq \left\|\bm{e}_k\right\|^2 + m_Q^2 + 2 m_Q \left|\left\langle\bm{e}_k, \bm{a}_{i}\right\rangle\right|.
            \end{aligned}
        $$
\end{proof}
\cref{lem: full quantile} relates the realized quantiles $Q_q$ with the uncorrupted quantiles $\tilde{Q}_q$ in a full sample regime. (We will derive an analogous lemma for the subsampled regime in \cref{sec:main}.)

\begin{lem}[Lemma 3.6 in \cite{haddock_quantilebased_2022}] \label{lem: full quantile}
    Let $\bx_k$ be a fixed vector. Under  $(\beta m)$-sparse corruption, we have 
    $$
        \tilde{Q}_{q-\beta}\left(\bm{x}_k\right) \leq  \tilde{Q}_{\frac{q-\beta}{1 - \beta}}\left(\bm{x}_k, B^c\right) \leq Q_q\left(\bm{x}_k\right) \leq \tilde{Q}_{q+\beta}\left(\bm{x}_k\right).
    $$
\end{lem}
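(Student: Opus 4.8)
The statement is purely combinatorial: it concerns order statistics of finite multisets, so my plan is to reduce all three inequalities to one elementary principle. Write $u_i := |\langle \bx_k - \bx^*, \ba_i\rangle|$ for the uncorrupted residual and $r_i := |b_i - \langle \ba_i, \bx_k\rangle|$ for the realized residual at index $i$. The only structural facts I would use are that $r_i = u_i$ whenever $i\in B^c$, and that $|B|\le\beta m$. Recall the quantile convention: $Q_q(\bx_k)$ is the $\lfloor qm\rfloor$-th smallest of $\{r_i\}_{i\in[m]}$, $\tilde{Q}_p(\bx_k)$ is the $\lfloor pm\rfloor$-th smallest of $\{u_i\}_{i\in[m]}$, and $\tilde{Q}_p(\bx_k,B^c)$ is the $\lfloor p|B^c|\rfloor$-th smallest of $\{u_i\}_{i\in B^c}$. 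I would state the governing principle as: the $\ell$-th smallest element of a multiset $\{z_i\}_{i\in S}$ is $\le t$ if and only if $|\{i\in S : z_i\le t\}|\ge \ell$ (for any valid rank $1\le\ell\le|S|$).

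For the two inner inequalities, which compare a realized quantile to an uncorrupted one, this principle does all the work through a corruption-counting argument. For the rightmost inequality $Q_q(\bx_k)\le\tilde{Q}_{q+\beta}(\bx_k)$, I would set $t := \tilde{Q}_{q+\beta}(\bx_k)$; by definition at least $(q+\beta)m$ of the $u_i$ satisfy $u_i\le t$, and since at most $\beta m$ indices are corrupted, at least $(q+\beta)m-|B|\ge qm$ of these lie in $B^c$, where $r_i=u_i\le t$. Thus at least $qm$ realized residuals are $\le t$, and the principle yields $Q_q(\bx_k)\le t$. For the middle inequality $\tilde{Q}_{\frac{q-\beta}{1-\beta}}(\bx_k,B^c)\le Q_q(\bx_k)$, I would set $s := Q_q(\bx_k)$; at least $qm$ realized residuals are $\le s$, of which at most $|B|$ are corrupted, so at least $qm-|B|$ uncorrupted residuals satisfy $u_i\le s$. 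A short computation, namely $(1-q)(\beta m-|B|)\ge 0$, shows $qm-|B|\ge\lfloor\frac{q-\beta}{1-\beta}|B^c|\rfloor$, the rank defining the $B^c$-restricted quantile, so the principle again gives the bound.

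For the leftmost inequality $\tilde{Q}_{q-\beta}(\bx_k)\le\tilde{Q}_{\frac{q-\beta}{1-\beta}}(\bx_k,B^c)$, both sides involve only the $u_i$, so it reduces to sub-multiset monotonicity: since $B^c\subseteq[m]$, restricting to $B^c$ can only raise a fixed-rank order statistic, i.e.\ the $\ell$-th smallest of $\{u_i\}_{i\in B^c}$ is at least the $\ell$-th smallest of $\{u_i\}_{i\in[m]}$. Combining this with the observation that the rank $\lfloor\frac{q-\beta}{1-\beta}|B^c|\rfloor\ge(q-\beta)m$ (because $|B^c|=m-|B|\ge(1-\beta)m$) and that order statistics are nondecreasing in rank closes the chain.

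The only obstacle is bookkeeping, not analysis: I must check that every rank used is a valid index (does not exceed the size of the multiset it indexes, which needs $0\le q-\beta$ and $q+\beta\le 1$) and that the floors align correctly. The integrality assumptions $qm,\beta m\in\mathbb{Z}$ remove the floors in the leading cases, and the general bound $|B|\le\beta m$ is absorbed by the elementary inequality $(1-q)(\beta m-|B|)\ge 0$ used above. No probabilistic or spectral property of $\bA$ enters; the lemma holds deterministically for every fixed $\bx_k$ and every $(\beta m)$-sparse $\bepsilon$.
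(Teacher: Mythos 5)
Your proof is correct. The paper does not actually prove this lemma itself (it is cited verbatim as Lemma 3.6 of Haddock et al.), but your rank-counting argument --- absorbing the at most $\beta m$ corrupted indices into the quantile-level slack and invoking sub-multiset monotonicity of order statistics --- is the standard proof and is exactly the same technique the paper deploys for its subsampled analogue in \cref{lem: sub quantile}, where the sets $S_l$ and $S_u$ play the role of your threshold-counting sets.
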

While the uncorrupted quantiles are not accessible during runtime, they are more technically amenable  and can be bounded by \cref{lem: A and ideal quantile}. 
\begin{lem}[\cite{steinerberger2023quantile}, Lemma 1] \label{lem: A and ideal quantile}
    For any $q' \in (0,1)$, the uncorrupted quantile satisfies:
    \begin{equation} \label{eq:ideal-quantile-upper} 
        \tilde{Q}_{q'}(\bm{x}_k) \leq \Phi_{q'} \frac{\left\|\bm{x}_k - \target\right\|}{\sqrt{n}}
        \quad \text{where}~~\Phi_{q'} := \frac{\sigma_{\max}(\bm{A}) \sqrt{n}}{\sqrt{m} \sqrt{1 - q'}}.
    \end{equation}
\end{lem}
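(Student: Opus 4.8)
The plan is to prove the bound by a direct energy-counting argument that converts the singular-value control of $\bm{A}$ into a bound on how many uncorrupted residuals can be large. Write $\bm{e}_k := \bm{x}_k - \target$ and abbreviate the target value as $\tau := \tilde{Q}_{q'}(\bm{x}_k)$, so that $\tau$ is the $\lfloor q'm\rfloor$-th smallest element of the multiset $\{|\langle \bm{e}_k,\bm{a}_i\rangle|\}_{i=1}^m$. The single global fact I would exploit is that the full collection of uncorrupted residuals has bounded total energy: stacking the inner products gives $\bm{A}\bm{e}_k$, so
\begin{equation*}
\sum_{i=1}^m |\langle \bm{e}_k,\bm{a}_i\rangle|^2 = \|\bm{A}\bm{e}_k\|^2 \le \sigma_{\max}^2(\bm{A})\,\|\bm{e}_k\|^2.
\end{equation*}

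Next I would translate the definition of the quantile into a counting statement. By the sorted-rearrangement definition of $\tilde{Q}_{q'}$, every element ranked at position $\lfloor q'm\rfloor$ or higher is at least $\tau$; hence at least $m-\lfloor q'm\rfloor \ge (1-q')m$ of the residuals satisfy $|\langle \bm{e}_k,\bm{a}_i\rangle|\ge \tau$. Restricting the energy sum above to just this sub-collection and bounding each retained term below by $\tau^2$ yields
\begin{equation*}
(1-q')m\,\tau^2 \le \sum_{i:\,|\langle \bm{e}_k,\bm{a}_i\rangle|\ge \tau} |\langle \bm{e}_k,\bm{a}_i\rangle|^2 \le \sigma_{\max}^2(\bm{A})\,\|\bm{e}_k\|^2.
\end{equation*}

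Finally I would solve for $\tau$ and match constants. Dividing by $(1-q')m$ and taking square roots gives $\tau \le \sigma_{\max}(\bm{A})\,\|\bm{e}_k\|/\sqrt{(1-q')m}$, and substituting the definition of $\Phi_{q'}$ shows this equals exactly $\Phi_{q'}\|\bm{e}_k\|/\sqrt{n}$, which is the claim. The only point that requires any care — and the closest thing to an obstacle here — is the floor-function bookkeeping: I must confirm that the count of residuals exceeding $\tau$ is genuinely at least $(1-q')m$ rather than off by the rounding hidden in $\lfloor q'm\rfloor$. Since $\lfloor q'm\rfloor \le q'm$ gives $m-\lfloor q'm\rfloor \ge (1-q')m$, the inequality points the right way, so no separate argument is needed. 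Everything else is a one-line rearrangement, and notably the whole argument is deterministic: it holds for any fixed $\bm{x}_k$ and any realization of $\bm{A}$, with all the randomness of the model entering only later through control of $\sigma_{\max}(\bm{A})$.
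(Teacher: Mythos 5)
Your proof is correct and is essentially the paper's argument: the paper derives this lemma as a ``straightforward consequence'' of its absolute-sum bound (\cref{lem: absolute sum bound}), which encodes the same energy-counting idea you use — at least $(1-q')m$ residuals are at least the quantile, while the total residual energy satisfies $\sum_i |\langle \bm{e}_k,\bm{a}_i\rangle|^2 = \|\bm{A}\bm{e}_k\|^2 \le \sigma_{\max}^2(\bm{A})\|\bm{e}_k\|^2$. The only cosmetic difference is that you work directly with the squared residuals, skipping the Cauchy--Schwarz passage through the $\ell_1$ sum that forms the first link in the paper's chain of inequalities; the conclusion and constants are identical.
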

\cref{lem: A and ideal quantile} is a straightforward consequence of the following lemma.   
\begin{lem}[\cite{haddock_quantilebased_2022}, Lemma 3.7] 
    Let $\mathbf{A} \in \mathbb{R}^{m \times n}$. For all unit vectors $\mathbf{x} \in \mathbb{R}^n$ and every subset $S \subseteq [m]$, the following holds: 
    \begin{align}
          \sum_{i \in S}\left|\left\langle\mathbf{x}, \mathbf{a}_i\right\rangle\right| 
        \leq \sqrt{|S|} \cdot \left(\sum_{i \in S}\left|\left\langle\mathbf{x}, \mathbf{a}_i\right\rangle\right|^2\right)^{1/2} 
        \leq \sigma_{\max}(\bm{A}) \sqrt{\frac{n}{m}} \cdot \sqrt{\frac{m|S|}{n}}.
    \end{align}
    \label{lem: absolute sum bound}
\end{lem}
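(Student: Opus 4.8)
The plan is to prove the displayed chain by handling its two inequalities separately, since the first is a purely finite-dimensional norm comparison while the second is an operator-norm bound on the row-submatrix $\mathbf{A}_S$.

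For the first inequality, I would collect the quantities $\langle\mathbf{x},\mathbf{a}_i\rangle$ for $i\in S$ into a single vector $\mathbf{v}\in\mathbb{R}^{|S|}$, so that $\sum_{i\in S}|\langle\mathbf{x},\mathbf{a}_i\rangle|=\|\mathbf{v}\|_1$ and $\big(\sum_{i\in S}|\langle\mathbf{x},\mathbf{a}_i\rangle|^2\big)^{1/2}=\|\mathbf{v}\|_2$. The bound $\|\mathbf{v}\|_1\le\sqrt{|S|}\,\|\mathbf{v}\|_2$ is then immediate from the Cauchy--Schwarz inequality applied to $\mathbf{v}$ and the all-ones vector $\mathbf{1}\in\mathbb{R}^{|S|}$, namely $\|\mathbf{v}\|_1=\langle|\mathbf{v}|,\mathbf{1}\rangle\le\|\mathbf{v}\|_2\,\|\mathbf{1}\|_2=\sqrt{|S|}\,\|\mathbf{v}\|_2$. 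This settles the left-hand $\le$ with no reference to $\mathbf{A}$ at all.

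For the second inequality, the key observation is that $\sum_{i\in S}|\langle\mathbf{x},\mathbf{a}_i\rangle|^2=\|\mathbf{A}_S\mathbf{x}\|^2$, where $\mathbf{A}_S$ is the row-submatrix introduced in the preliminaries. Since passing from $S$ to $[m]$ only adds nonnegative terms, for every unit $\mathbf{x}$ we have $\|\mathbf{A}_S\mathbf{x}\|^2\le\|\mathbf{A}\mathbf{x}\|^2\le\sigma_{\max}(\mathbf{A})^2$ (equivalently, deleting rows cannot increase the top singular value, so $\sigma_{\max}(\mathbf{A}_S)\le\sigma_{\max}(\mathbf{A})$). Taking square roots and using $\|\mathbf{x}\|=1$ gives $\big(\sum_{i\in S}|\langle\mathbf{x},\mathbf{a}_i\rangle|^2\big)^{1/2}\le\sigma_{\max}(\mathbf{A})$, and multiplying through by $\sqrt{|S|}$ yields $\sigma_{\max}(\mathbf{A})\sqrt{|S|}$. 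To match the stated form, I would finish by noting the identity $\sigma_{\max}(\mathbf{A})\sqrt{n/m}\cdot\sqrt{m|S|/n}=\sigma_{\max}(\mathbf{A})\sqrt{|S|}$, which closes the chain exactly.

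There is essentially no analytic obstacle here: each step reduces either to Cauchy--Schwarz or to the monotonicity of the largest singular value under deletion of rows. The only points needing a moment's care are the explicit use of $\|\mathbf{x}\|=1$ when converting the operator-norm bound into the numerical constant $\sigma_{\max}(\mathbf{A})$, and the trivial algebraic simplification of the right-hand side so that it reads as in the statement rather than as the cleaner $\sigma_{\max}(\mathbf{A})\sqrt{|S|}$.
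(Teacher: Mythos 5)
Your proof is correct. Note that the paper does not prove this lemma at all---it is imported directly from \cite{haddock_quantilebased_2022} as a citation---and your argument (Cauchy--Schwarz applied to the vector of residuals against the all-ones vector, followed by $\sigma_{\max}(\mathbf{A}_S)\le\sigma_{\max}(\mathbf{A})$ since deleting rows cannot increase the top singular value, plus the trivial simplification $\sqrt{n/m}\cdot\sqrt{m|S|/n}=\sqrt{|S|}$) is precisely the standard proof of this fact.
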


\subsection{ Projection onto Uncorrupted Row}
\label{sec:  Projection onto Uncorrupted Rows}
In this scenario, the error is non-expansive since
\begin{equation} \label{eq: non-expansive}
\|\bX_{k+1} - \target\|^2 = \left\| \left(\bI - \frac{\ba_{\updateid{k+1}}\ba_{\updateid{k+1}}^{\top}}{\|\ba_{\updateid{k+1}}\|^2}\right) \bm{e}_k \right\|^2 \leq \|\bm{e}_k\|^2.
\end{equation}
In expectation, one actually has an error contraction captured by the following
Strohmer-Vershynin bound. 
\begin{lem}[\cite{strohmer2009randomized}]
Let $\bm{A} \bx = \bb$ be a consistent linear system. The iterates $\bX_{k+1}$ generated by RK~\eqref{eq:rk} satisfy
\begin{equation} \label{eq: contraction-noiseless2}
    \mathbb{E}\|\bX_{k+1} - \target\|^2
    \leq \left(1 - \frac{\sigma_{\min}^2(\bm{A})}{\|\bm{A}\|_F^2}\right) \|\bm{e}_k\|^2,
\end{equation}
where $\ba_{\updateid{k+1}}$ is selected with probability proportional to $\|\ba_{\updateid{k+1}}\|^2$.
\label{Stbound}
\end{lem}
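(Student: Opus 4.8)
The plan is to exploit the orthogonal-projection structure of a single RK step under consistency, and then average over the row-selection distribution so that the row-norm weights cancel cleanly.

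First I would use consistency: since $\bA\target=\bb$ we have $b_j=\langle\ba_j,\target\rangle$, so the residual of row $j$ equals $\ba_j^\top\bx_k-b_j=\langle\ba_j,\be_k\rangle$. Substituting into \eqref{eq:rk} and subtracting $\target$ from both sides gives, exactly as in \eqref{eq: non-expansive},
\[
\be_{k+1}=\Big(\bI-\frac{\ba_j\ba_j^\top}{\|\ba_j\|^2}\Big)\be_k .
\]
The matrix $\bI-\ba_j\ba_j^\top/\|\ba_j\|^2$ is the orthogonal projector onto the hyperplane $\ba_j^\perp$, so the Pythagorean identity yields the \emph{exact} one-step decrease $\|\be_{k+1}\|^2=\|\be_k\|^2-\langle\ba_j,\be_k\rangle^2/\|\ba_j\|^2$.

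Next I would take the conditional expectation over the update index $\updateid{k+1}=j$, which is drawn with probability $\|\ba_j\|^2/\|\bA\|_F^2$. The crucial point is that this weighting exactly cancels the $\|\ba_j\|^2$ in the denominator of the subtracted term, so that
\[
\mathbb{E}\,\|\be_{k+1}\|^2=\|\be_k\|^2-\frac{1}{\|\bA\|_F^2}\sum_{j=1}^m\langle\ba_j,\be_k\rangle^2=\|\be_k\|^2-\frac{\|\bA\be_k\|^2}{\|\bA\|_F^2}.
\]
Finally, lower-bounding $\|\bA\be_k\|^2\ge\sigma_{\min}^2(\bA)\,\|\be_k\|^2$ and substituting gives the claimed contraction factor $1-\sigma_{\min}^2(\bA)/\|\bA\|_F^2$.

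The only subtlety — and the step I would treat most carefully — is justifying the bound $\|\bA\be_k\|^2\ge\sigma_{\min}^2(\bA)\|\be_k\|^2$, since $\sigma_{\min}$ denotes the smallest \emph{nonzero} singular value; the inequality holds only when $\be_k$ lies in the row space of $\bA$. I would establish this as an invariant: each RK step changes $\bx_k$ only along $\ba_j\in\mathrm{row}(\bA)$, so if the initial error $\be_0$ lies in $\mathrm{row}(\bA)$ (e.g.\ when $\bx_0$ is taken in the row space, or when the solution is unique), the error remains in $\mathrm{row}(\bA)$ for every $k$ by induction. On that subspace the Rayleigh quotient of $\bA^\top\bA$ is bounded below by $\sigma_{\min}^2(\bA)$, which closes the argument.
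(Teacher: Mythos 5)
Your proposal is correct: the paper itself states this lemma without proof (deferring to the cited Strohmer--Vershynin paper), and your argument is exactly the classical one from that reference — exact Pythagorean decrease from the orthogonal projection, cancellation of the row-norm weights under the $\|\ba_j\|^2/\|\bA\|_F^2$ sampling, and a Rayleigh-quotient bound. Your explicit handling of the row-space invariant needed because $\sigma_{\min}$ denotes the smallest \emph{nonzero} singular value is a careful touch that the original (which assumes full column rank) does not need, and it is the right way to justify the bound in the paper's more general setting.
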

In the analysis of QRK, the quantity
\begin{equation} \label{eq:uniform-smin}
    \smin{\alpha} := \inf _{\substack{S \subseteq[m]:|S| \geq \alpha m}} \sigma_{\min }\left(\mathbf{A}_S\right)  \sqrt{\frac{n}{m}}\quad \alpha \in (0,1),
\end{equation}
plays an important role in our application of \cref{Stbound}. The reason is that accepting an uncorrupted update sample in QRK can be treated as running one iteration of standard RK for an uncorrupted linear subsystem with a number of rows greater than $\alpha m$.  
\subsection{Coefficient Matrix $\bA$}\label{matrixA}
When $\bA$ have rows sampled i.i.d. uniformly from $\mathbb{S}^{n-1}$, the constants $\smin{\alpha}$ in \cref{eq:uniform-smin} and $\Phi_{q'}$ in \cref{eq:ideal-quantile-upper} are bounded with high probability when $m=\Omega(n)$. 
\begin{lem}[\cite{haddock_quantilebased_2022}, Lemma 3.7 \& Proposition 3.4] \label{lem: randomness-Gaussian}
 Let $\alpha_0 \in (0,1)$, there exists absolute constants $C_1, c_2, c, C$ such that if $
m \geq \max\big\{\Cforsmallest{\alpha_0}, n\big\},
$   then with probability at least $1 - \Pforsmallest{\alpha_0} - \Pforabs$,
we have: 
\begin{equation} \label{eq: Gaussian}
    \smin{\alpha_0} \geq \frac{\sqrt{2\pi}\alpha_0^{3 / 2}}{24}, \qquad \frac{\sigma_{\max}(\bm{A}) \sqrt{n}}{\sqrt{m}} \leq C, \qquad  \Phi_{q'} \leq \frac{C}{\sqrt{1-q'}}\quad  (\forall q' \in (0,1)).  
\end{equation}

\end{lem}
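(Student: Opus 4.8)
The statement has three parts, and the third is immediate: by definition $\Phi_{q'}=\frac{\sigma_{\max}(\bA)\sqrt n}{\sqrt m\,\sqrt{1-q'}}$, so $\Phi_{q'}\le C/\sqrt{1-q'}$ follows at once from the operator-norm bound $\frac{\sigma_{\max}(\bA)\sqrt n}{\sqrt m}\le C$. Thus the real work is (B) the operator-norm upper bound and (A) the uniform lower bound $\smin{\alpha_0}\ge\frac{\sqrt{2\pi}\alpha_0^{3/2}}{24}$, and I would prove (B) first so it can feed into (A). For (B), rescale the rows: the vectors $\sqrt n\,\ba_i$ are isotropic (covariance $\bI_n$) and bounded (norm $\sqrt n$), hence subgaussian with an absolute constant. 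The standard deviation bound for the operator norm of a matrix with independent isotropic subgaussian rows applied to $\sqrt n\,\bA$ gives $\sigma_{\max}(\sqrt n\,\bA)\le\sqrt m+C_0\sqrt n+t$ with probability at least $1-2\exp(-c t^2)$; taking $t=c'\sqrt m$ and using $m\ge n$ collapses the right-hand side to $C\sqrt m$, so $\sigma_{\max}(\bA)\sqrt n/\sqrt m\le C$ with probability at least $1-\Pforabs$, matching the claimed failure probability.

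For (A), adding rows only increases $\sigma_{\min}$ (as $\bA_{S'}^\top\bA_{S'}\succeq\bA_S^\top\bA_S$ whenever $S\subseteq S'$), so the infimum is attained at $|S|=\alpha_0 m$, and
\[
\smin{\alpha_0}\sqrt{m/n}=\inf_{\bx\in\ball}\ \min_{|S|=\alpha_0 m}\|\bA_S\bx\|=\inf_{\bx\in\ball}g(\bx),\qquad g(\bx):=\Big(\textstyle\sum_{j\le\alpha_0 m}\langle\ba_{(j)},\bx\rangle^2\Big)^{1/2},
\]
where $\langle\ba_{(1)},\bx\rangle^2\le\cdots\le\langle\ba_{(m)},\bx\rangle^2$ is the increasing rearrangement of the squared projections. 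Crucially, this eliminates any union bound over the $\binom{m}{\alpha_0 m}$ subsets: for each fixed $\bx$ the worst subset is explicit (the $\alpha_0 m$ smallest projections), so I only need a union bound over an $\epsilon$-net of the sphere. Since $g$ is a minimum of the maps $\bx\mapsto\|\bA_S\bx\|$, each $\sigma_{\max}(\bA_S)\le\sigma_{\max}(\bA)$-Lipschitz, $g$ is $\sigma_{\max}(\bA)$-Lipschitz, so a net $\mathcal N\subset\ball$ with $|\mathcal N|\le(3/\epsilon)^n$ together with (B) yields $\inf_{\bx}g(\bx)\ge\min_{\bx\in\mathcal N}g(\bx)-\epsilon\,\sigma_{\max}(\bA)$.

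It remains to lower-bound $g(\bx)$ at a fixed net direction. Fixing a threshold $\tau\asymp\alpha_0/\sqrt n$, the projections $\langle\ba_i,\bx\rangle$ are independent across $i$, so $N_0(\bx):=\#\{i:|\langle\ba_i,\bx\rangle|<\tau\}\sim\Bin{m}{p_0}$; using that the sphere-projection density is maximized at $0$ with value $\approx\sqrt{n/(2\pi)}$ gives $p_0\le 2\tau\sqrt{n/(2\pi)}\le\alpha_0/4$. On the event $N_0(\bx)\le\alpha_0 m/2$, at least $\alpha_0 m/2$ of the $\alpha_0 m$ smallest projections exceed $\tau$, whence $g(\bx)^2\ge(\alpha_0 m/2)\tau^2\gtrsim\alpha_0^3 m/n$, and calibrating constants yields $g(\bx)\ge\frac{\sqrt{2\pi}\alpha_0^{3/2}}{24}\sqrt{m/n}$ (this is where $\alpha_0^{3/2}$ is born: $\tau\sim\alpha_0/\sqrt n$ times $\sqrt{\alpha_0 m}$). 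The tight Chernoff bound of \cref{thm: chernoff} controls the bad event, $\mathbb{P}(N_0(\bx)>\alpha_0 m/2)\le\exp(-c\alpha_0 m)$; a union bound over $\mathcal N$ costs $(3/\epsilon)^n$, and choosing $\epsilon\asymp\alpha_0^{3/2}$ (so the net-to-sphere slack $\epsilon\,\sigma_{\max}(\bA)$ is at most half the target) makes $\log|\mathcal N|\asymp n\log(1/\alpha_0)$. The hypothesis $m\ge C_1\frac1{\alpha_0}\log(\frac{\sqrt{2/\pi}}{\alpha_0})n$ is exactly what forces $c\alpha_0 m\ge\log|\mathcal N|+\Omega(\alpha_0 m)$, closing the union bound with failure probability $\Pforsmallest{\alpha_0}$.

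I expect the main obstacle to be this simultaneous calibration in (A): the threshold $\tau$, the small-ball probability $p_0$, the net resolution $\epsilon$, and the Chernoff exponent must be tuned together so that (i) the per-direction bound lands on the sharp constant $\frac{\sqrt{2\pi}\alpha_0^{3/2}}{24}$, (ii) the exponent $c\alpha_0 m$ strictly dominates the net entropy $n\log(1/\epsilon)$ under precisely the stated scaling of $m$, and (iii) the Lipschitz slack $\epsilon\,\sigma_{\max}(\bA)$ does not erode the bound. Tracking the sphere-projection density accurately—the source of the $\sqrt{2/\pi}$ and $\sqrt{2\pi}$ constants—rather than invoking a crude subgaussian estimate is what makes the constants come out as stated.
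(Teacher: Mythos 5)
The paper never proves this lemma---it is imported verbatim (statement, constants, and failure probabilities) from \cite{haddock_quantilebased_2022}, Lemma 3.7 and Proposition 3.4---so the comparison is against that source, and your proposal is a correct reconstruction of essentially the same argument: the exchange $\inf_{|S|=\alpha_0 m}\sigma_{\min}(\bA_S)=\inf_{\bx\in\mathbb{S}^{n-1}}\min_{|S|=\alpha_0 m}\|\bA_S\bx\|$ to kill the union bound over subsets, the anti-concentration bound on sphere marginals (density at most $\approx\sqrt{n/(2\pi)}$, the source of the $\sqrt{2\pi}$ and $\sqrt{2/\pi}$ constants), a binomial Chernoff bound for the count of small projections, an $\epsilon$-net with Lipschitz constant $\sigma_{\max}(\bA)$, and a subgaussian-rows operator-norm bound, with the calibration $\tau\asymp\alpha_0/\sqrt{n}$, $\epsilon\asymp\alpha_0^{3/2}$ indeed closing under the stated hypothesis $m\gtrsim\frac{n}{\alpha_0}\log(\frac{\sqrt{2/\pi}}{\alpha_0})$. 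One justification should be repaired: ``isotropic and bounded, hence subgaussian with an absolute constant'' is false as a general implication (an isotropic vector of norm $\sqrt{n}$ can have $\psi_2$-norm as large as order $\sqrt{n/\log n}$, e.g.\ a random scaled coordinate vector); the correct citation is the specific fact that the uniform distribution on the sphere of radius $\sqrt{n}$ is subgaussian with an absolute constant, which follows from L\'evy concentration on $\mathbb{S}^{n-1}$ (or directly from the marginal density), after which the standard operator-norm theorem for independent isotropic subgaussian rows applies as you use it.
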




\section{Main Results}\label{sec:main}
We present our main theoretical results. First, we introduce an upper bound on the subsample size in \cref{sec:upper_bound} and present its proof in \cref{sec: Proof of the main}. Then, we provide the lower bound in \cref{sec: lower bound} and prove it in   \cref{sec:lowerproof} to complement the tightness of our upper bound. 


\subsection{Upper Bound} \label{sec:upper_bound}
In our upper bound, we make the following assumption on $\bA$. 
\begin{assumption}\label{assum1}
    $\bA$ has  rows in $\mathbb{S}^{n-1}$ and satisfies all the spectral properties in~\cref{eq: Gaussian} with 
$\alpha_0 = \frac{q}{2}$. 
\end{assumption}

When $\bA$ has i.i.d. rows uniformly sampled from $\mathbb{S}^{n-1}$,
 these properties hold with high probability under $m \geq C_q n$, where $C_q$ is some constant depending only on $q$; see \cref{lem: randomness-Gaussian}.

We now formally present the upper bound on the subsample size $D$. 


\begin{thm} \label{thm: main Gaussian}
 Consider problem \cref{eq: problem} with arbitrary $(\beta m)$-sparse $\bm{\epsilon}$ and suppose \cref{assum1} holds. Let $q \in (0,1)$, $T>1$ be the number of iterations run, and $D$ be the subsample size used in  \cref{algo:quantileRK}. There exist constants $c_1,C_2,c_3,c_4,c_5$ depending only on $q$, such that for any $\beta \in (0, c_1)$, if the positive integer $D$ satisfies 
    $$D\ge \frac{C_2\log T}{\log(1/\beta)},$$ then  we have the following  guarantees on iterate $\bX_{T}$ of \cref{algo:quantileRK}:
    \begin{itemize}
        \item (Convergence in expectation)
         There exists a failure event $\Omega$ 
        satisfying $\mathbb{P}(\Omega)\le T^{-5}\exp(-\frac{1-q}{4}\log(1/\beta)\cdot D) \leq \frac{1}{2}$ 
        such that  
    \begin{equation}\label{eq:main-thm-convergence}
        \mathbb{E}\left(\left\|\bm{X}_T - \target\right\|^2 1_{\Omega^c}\right) \leq \left(1-\frac{c_3}{n}\right)^T \left\|\init - \target\right\|^2.
    \end{equation}
   \item (Convergence in probability) We have 
   \begin{align} \label{eq: main failure-prob-Gaussian}
       \mathbb{P}\left(\|\bX_T-\bx^*\|^2\le \Big(1-\frac{c_4}{n}\Big)^T\|\bx_0-\bx^*\|^2\right) \ge 1 - T^{-5} - 2\exp\Big(-\frac{c_5T}{n}\Big).
   \end{align}
   \end{itemize}
\end{thm}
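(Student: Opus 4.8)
The plan is to establish two–sided control on the subsampled quantile $Q_q(\bx_k,S^{(k+1)})$ and then feed it into a one–step drift estimate that I iterate over the $T$ steps. For the upper side, I would note (via \cref{lem: full quantile} and \cref{lem: A and ideal quantile}) that the subsample quantile can exceed the benign uncorrupted quantile $\tilde{Q}_{q+\delta}(\bx_k)\le \Phi_{q+\delta}\,\|\be_k\|/\sqrt{n}$ only when the $D$-subsample happens to contain an unusually large number of corrupted indices, roughly $\ge(1-q)D$ of them, so that corrupted residuals can be pushed past the $\lfloor qD\rfloor$-th order statistic. Since the number of corrupted indices in the subsample is $\Bin{D}{\beta}$, the tight Chernoff bound \cref{thm: chernoff} shows this ``dirty subsample'' event has probability at most $\exp(-D\,\D(1-q\,\|\,\beta))\approx\beta^{(1-q)D}$ for small $\beta$. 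I would then define the global failure event $\Omega$ as the union over the $T$ steps of the dirty-subsample events, so $\mathbb P(\Omega)\le T\beta^{c(1-q)D}$; the hypothesis $D\ge C_2\log T/\log(1/\beta)$ forces $\beta^{(1-q)D}=T^{-\Theta(C_2)}$, driving $\mathbb P(\Omega)$ below the stated level.

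Next I would analyze a single clean step (no dirty subsample) by splitting $\mathbb E[\|\be_{k+1}\|^2\mid\mathcal F_k]$ into an uncorrupted-acceptance part and a corrupted-acceptance part. For the former, a matching \emph{lower} bound on the quantile (again \cref{thm: chernoff}, holding with at least constant probability) guarantees that the acceptable uncorrupted set $S_k$ has size $\ge\alpha_0 m=(q/2)m$; averaging the projection over the uniformly drawn update sample then reproduces one Strohmer–Vershynin step (\cref{Stbound}) on the subsystem $\bA_{S_k}$, which by the definition of $\smin{\alpha_0}$ in \cref{eq:uniform-smin} and \cref{assum1} contracts the error by $1-c_3/n$. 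The subtle part is the corrupted term: bounding a single accepted corrupted row through \cref{lem: quantile bound error} costs $2Q\,|\langle\be_k,\ba_i\rangle|\lesssim\|\be_k\|^2/\sqrt n$, which is fatal at constant $\beta$. Instead I would keep the expectation over the uniform update sample and control $\tfrac1m\sum_{i\in B}2Q\,|\langle\be_k,\ba_i\rangle|$ by Cauchy–Schwarz together with \cref{lem: absolute sum bound}, i.e. $\le\tfrac{2Q}{m}\sqrt{|B|}\,\sigma_{\max}(\bA)\,\|\be_k\|$; with $|B|=\beta m$, $Q\le\Phi_{q+\delta}\|\be_k\|/\sqrt n$, and the spectral bounds \cref{eq: Gaussian}, this is only $O(\sqrt\beta)\,\|\be_k\|^2/n$. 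Combining the two pieces gives the clean-step contraction $\mathbb E[\|\be_{k+1}\|^2\mid\mathcal F_k]\le\bigl(1-(c_3-c'\sqrt\beta)/n\bigr)\|\be_k\|^2$, a genuine contraction once $\beta<c_1$.

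To obtain \cref{eq:main-thm-convergence} I would iterate this contraction along the process stopped at the first dirty step, so that on $\Omega^c$ every step is clean; chaining the conditional expectations yields $\mathbb E\bigl(\|\bX_T-\target\|^2\mathbf 1_{\Omega^c}\bigr)\le(1-c_3/n)^T\|\init-\target\|^2$. For the high-probability bound \cref{eq: main failure-prob-Gaussian} I would instead treat $\log\|\be_k\|^2$ as a supermartingale on $\Omega^c$: its conditional increments have mean $\le -c/n$ by the one-step drift, while any single step moves it by at most $O(1/\sqrt n)$ (the worst case being one accepted corrupted row, whose damage \cref{lem: quantile bound error} caps at $Q\le\Phi_{q+\delta}\|\be_k\|/\sqrt n$). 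An Azuma–Hoeffding bound with increment range $O(1/\sqrt n)$ over $T$ steps then produces a deviation probability $\exp(-c_5 T/n)$, and combining it with $\mathbb P(\Omega)\le T^{-5}$ gives the claimed $1-T^{-5}-2\exp(-c_5T/n)$.

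The hard part is precisely the corrupted-acceptance term. Any estimate that treats the accepted corrupted row individually loses a factor $\sqrt n$ against the $1/n$-scale contraction and cannot survive a constant corruption level; the whole argument hinges on replacing the per-row worst case by the average over the uniform update sample and on the aggregate inequality $\sum_{i\in B}|\langle\be_k,\ba_i\rangle|\le\sqrt{|B|}\,\sigma_{\max}(\bA)\,\|\be_k\|$, which converts the cost into the benign $O(\sqrt\beta/n)$ drift. Making this averaging rigorous requires the quantile upper bound to hold deterministically on the conditioning event, which is exactly why the clean-subsample event must be isolated into $\Omega$ and its probability pushed below $T^{-5}$ by the choice $D\gtrsim\log T/\log(1/\beta)$. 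It is also this same averaging that tames the drift but not the per-step \emph{range}, which is what degrades the concentration in \cref{eq: main failure-prob-Gaussian} to the $1/n$-in-the-exponent rate rather than an $\exp(-\Theta(T))$ one.
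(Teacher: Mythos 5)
Your overall architecture is the same as the paper's: two-sided Chernoff bounds on the subsampled quantile, a failure event $\Omega$ controlled by a union bound over the $T$ steps, a one-step analysis that splits on whether the update row is corrupted (taming the corrupted case by averaging over the uniform update sample via \cref{lem: absolute sum bound}, and extracting a Strohmer--Vershynin contraction from a large acceptable uncorrupted set obtained from a quantile \emph{lower} bound), iteration along the stopped process, and a final conversion to a probability statement. However, there is a genuine gap at the very first step, and it propagates: the two halves of your argument require \emph{incompatible} choices of the slack $\delta$.

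The gap is your claim that the subsampled quantile can exceed $\tilde{Q}_{q+\delta}(\bx_k)$ only when the subsample contains roughly $(1-q)D$ \emph{corrupted} indices, so that the dirty-subsample probability is governed by $\Bin{D}{\beta}$ and equals $\approx\beta^{(1-q)D}$. This implication is false: uncorrupted indices whose residuals exceed $\tilde{Q}_{q+\delta}(\bx_k)$ --- and there are about $(1-q-\delta)m$ of these --- push the $\lfloor qD\rfloor$-th order statistic past the threshold just as effectively. The correct per-step failure probability (cf.\ \cref{lem: sub quantile}) is $\exp\left(-D\,\D\!\left(q\,\|\,q+\delta-\beta\right)\right)$, and this scales like $\beta^{(1-q)D}$ \emph{only} when the threshold level $q+\delta$ lies within $O(\beta)$ of $1$. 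If $\delta$ is a fixed constant --- which is exactly what your clean-step analysis needs in order to claim $\Phi_{q+\delta}=O(1)$ and hence a corrupted-acceptance cost of $O(\sqrt{\beta})\|\be_k\|^2/n$ --- then the per-step failure probability is only $e^{-c(\delta)D}$, and $\mathbb{P}(\Omega)\le T e^{-c(\delta)D}\le T^{-5}$ forces $D\gtrsim\log T$, not $D\gtrsim\log T/\log(1/\beta)$: for small $\beta$ the theorem's subsample size is never reached. Conversely, if you repair the probability bound by raising the threshold to level $1-C_1'\beta$ (this is what the paper does through $\e{u}$ in \cref{eq:def eu}), then $\Phi_{1-C_1'\beta}=\Theta(1/\sqrt{C_1'\beta})$, and your corrupted cost $\frac{2C\Phi_{q+\delta}\sqrt{\beta}}{n}\|\be_k\|^2$ becomes $\Theta(1/\sqrt{C_1'})\cdot\frac{\|\be_k\|^2}{n}$ --- a constant over $n$, \emph{not} $O(\sqrt{\beta})/n$. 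The contraction then survives not because $\beta$ is small, but because the free constant $C_1'$ is tuned large relative to the Strohmer--Vershynin constant $p_l^c\sigma_{\min,\alpha_0}^2$; this tuning, carried out in \cref{eq: C_1}, is precisely the ingredient your proposal omits and the only way to make both halves of the argument hold simultaneously.

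Two smaller points. Your Azuma/supermartingale route to \cref{eq: main failure-prob-Gaussian} is both unnecessary and incomplete: the increments of $\log\|\be_k\|^2$ are unbounded below (a single projection can nearly annihilate the error), so plain Azuma--Hoeffding does not apply and one would need a one-sided Freedman-type inequality with variance control. The paper obtains the same $2\exp(-cT/n)$ term directly from the expectation bound by a conditional Markov inequality on $\Omega^c$ (which has probability $\ge\tfrac12$) with the geometric threshold $(1-\tfrac{c}{2n})^T$. Finally, your ``chaining the conditional expectations along the stopped process'' does require the filtration/stopping-time formalization of \cref{lem: induction}, since $1_{\Omega^c}$ is not measurable at step $k$; your stopped-process phrasing is, however, essentially the right fix for that issue.
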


We mention that the above \cref{eq: main failure-prob-Gaussian} is what we claimed as the first informal theorem in  \cref{sec11}. 
 Due to Markov's inequality, \cref{eq: main failure-prob-Gaussian} is indeed an immediate consequence of the expected convergence \cref{eq:main-thm-convergence}. 

\begin{proposition}
    In  \cref{thm: main Gaussian}, the convergence in expectation \cref{eq:main-thm-convergence} implies the high-probability one \cref{eq: main failure-prob-Gaussian}.  
\end{proposition}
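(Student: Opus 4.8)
The plan is to derive the high-probability bound \cref{eq: main failure-prob-Gaussian} from the truncated expectation bound \cref{eq:main-thm-convergence} by a single application of Markov's inequality, after splitting over the failure event $\Omega$. Write $R := \|\init - \target\|^2$, which is deterministic (if $R=0$ the claim is trivial since then $\init=\target$ and the algorithm stays put), and abbreviate $E_T := \|\bX_T - \target\|^2$. First I would decompose the target bad event according to whether $\Omega$ occurs:
\begin{align}
\mathbb{P}\big(E_T > (1-\tfrac{c_4}{n})^T R\big) \le \mathbb{P}(\Omega) + \mathbb{P}\big(\{E_T > (1-\tfrac{c_4}{n})^T R\}\cap \Omega^c\big). \nonumber
\end{align}
The first term is immediately controlled by the hypothesis $\mathbb{P}(\Omega)\le T^{-5}\exp\big(-\tfrac{1-q}{4}\log(1/\beta)\,D\big)\le T^{-5}$, which supplies the $T^{-5}$ term in \cref{eq: main failure-prob-Gaussian}.

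For the second term, the key observation is that on $\Omega^c$ one has $E_T 1_{\Omega^c}=E_T$, so the event is contained in $\{E_T 1_{\Omega^c} > (1-\tfrac{c_4}{n})^T R\}$. Since $E_T 1_{\Omega^c}\ge 0$, Markov's inequality with threshold $(1-\tfrac{c_4}{n})^T R$ together with \cref{eq:main-thm-convergence} gives
\begin{align}
\mathbb{P}\big(E_T 1_{\Omega^c} > (1-\tfrac{c_4}{n})^T R\big) \le \frac{\mathbb{E}(E_T 1_{\Omega^c})}{(1-\tfrac{c_4}{n})^T R} \le \Big(\frac{1-c_3/n}{1-c_4/n}\Big)^T. \nonumber
\end{align}
It then remains to convert this ratio of contraction factors into the term $2\exp(-c_5 T/n)$. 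Here I would fix $c_4$ to be any constant strictly smaller than $c_3$, taking $c_4 = c_3/2$ for concreteness. With $u := c_3/n$, the elementary inequality $\tfrac{1-u}{1-u/2}\le 1-\tfrac{u}{2}$ (which reduces to $0\le u^2/4$ after cross-multiplying) followed by $1+x\le e^x$ yields $\tfrac{1-c_3/n}{1-c_4/n}\le 1-\tfrac{c_3}{2n}\le \exp(-\tfrac{c_3}{2n})$; raising to the $T$-th power gives $(\tfrac{1-c_3/n}{1-c_4/n})^T \le \exp(-\tfrac{c_3 T}{2n})=\exp(-c_5 T/n)$ with $c_5 := c_3/2$. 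Summing the two contributions and absorbing the slack into the harmless factor $2$ delivers \cref{eq: main failure-prob-Gaussian}.

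I do not expect any substantive obstacle: this is a routine truncation-plus-Markov argument. The only point requiring mild care is the choice $c_4<c_3$, which is precisely what makes $\tfrac{1-c_3/n}{1-c_4/n}$ contract geometrically rather than stay at $1$; were one to insist on $c_4=c_3$, Markov would only yield a trivial $O(1)$ bound on the second term. The remaining ingredients — the truncation identity $E_T 1_{\Omega^c}=E_T$ on $\Omega^c$, which makes $E_T 1_{\Omega^c}$ the correct nonnegative variable to feed Markov, and the crude estimate $\mathbb{P}(\Omega)\le T^{-5}$ — follow directly from the hypotheses of \cref{thm: main Gaussian}.
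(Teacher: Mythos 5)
Your proof is correct and takes essentially the same route as the paper: both split the bad event over the failure event $\Omega$, bound $\mathbb{P}(\Omega)$ by $T^{-5}$, and apply Markov's inequality with the threshold $(1-\tfrac{c_3}{2n})^T\|\init-\target\|^2$, using the identical ratio estimate $\tfrac{1-u}{1-u/2}\le 1-\tfrac{u}{2}$ to extract the $\exp(-c_5 T/n)$ decay. The only differences are cosmetic: you apply Markov directly to the truncated variable $\|\bX_T-\target\|^2\,1_{\Omega^c}$ while the paper first passes to a conditional expectation via $\mathbb{P}(\Omega^c)\ge\tfrac12$ (whence its factor $2$), and your parenthetical for the degenerate case $\init=\target$ is slightly inaccurate (the iterate can still move by accepting a corrupted row), though that case is immaterial since \cref{eq:main-thm-convergence} then forces $\|\bX_T-\target\|^2 1_{\Omega^c}=0$ almost surely.
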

\begin{proof}
    
Since $\mathbb{P}(\Omega^c) \geq \frac{1}{2}$, we have
    \begin{align*}
        \mathbb{E}\big(\|\bm{X}_T-\target\|^2 1_{\Omega^c}\big) = \mathbb{P}(\Omega^c)\cdot\mathbb{E}\big(\|\bm{X}_T-\target\|^2\big|\Omega^c\big) \ge \frac{1}{2}\mathbb{E}\big(\|\bm{X}_T-\target\|^2\big|\Omega^c\big),
    \end{align*}
    which gives
    \begin{align*}
        \mathbb{E}\big(\|\bm{X}_T- \target \|^2\big|\Omega^c\big)\le 2 \Big(1-\frac{c_2}{n}\Big)^T \|\bm{x}_0-\target\|^2.
    \end{align*}
    Given $\epsilon  > 0$, by Markov's inequality we have
    \begin{align*}
        \mathbb{P}\Big(\|\bm{X}_T-\target\|^2 \ge \epsilon \|\bm{x}_0-\target\|^2\Big|\Omega^c\Big) \le \frac{2(1-\frac{c_2}{n})^T\|\bm{x}_0-\target\|^2}{\epsilon\|\bm{x}_0-\target\|^2} = \frac{2(1-\frac{c_2}{n})^T}{\epsilon},
    \end{align*}
    and hence
    \begin{align*}
         & \mathbb{P}\Big(\|\bm{X}_T -\target\|^2 \ge \epsilon\|\bm{x}_0-\target\|^2\Big) \\&\le \mathbb{P}(\Omega^c)   \mathbb{P}\Big(\|\bm{X}_T -\target\|^2 \ge \epsilon\|\bm{x}_0-\target\|^2\Big|\Omega^c\Big)+\mathbb{P}(\Omega)\\
         & \le \frac{2(1-\frac{c_2}{n})^T}{\epsilon} + \mathbb{P}(\Omega).
    \end{align*}
    To obtain the desired result \cref{eq: main failure-prob-Gaussian}, we set $\epsilon = (1 - \frac{c_2}{2n})^T$ and notice  
    \[
    \frac{2(1 - c_2/n)^T}{\epsilon} \leq 2 \left( \frac{1 - c_2/n}{1 - c_2/(2n)} \right)^T \leq 2 \Big(1 - \frac{c_2}{2n}\Big)^T \leq 2 \exp\Big(-\frac{c_2T}{2n}\Big),
    \]
    where the last step uses $\ln(1-x) \leq -x$. Setting $c_4 = c_2/2$ and $c_5 = c_2/2$ yields the result in \cref{eq: main failure-prob-Gaussian}.
\end{proof}

Therefore, all that remains is to prove the convergence in expectation in \cref{eq:main-thm-convergence}. To that end, we require some new ideas beyond \cite{haddock_quantilebased_2022,haddock_subsampled_2023,steinerberger2023quantile}.
We shall especially compare to the proof of \cite[Thm. 1.1]{haddock_quantilebased_2022} for an algorithm nearly identical to \cref{algo:quantileRK}, with the only difference being the replacement of our subsampled quantile by the full-sample one $Q_q(\bx_k)$.\footnote{In contrast,  \cite{steinerberger2023quantile,haddock_subsampled_2023} analyzed algorithms which select the update sample from acceptable rows to avoid the potential rejection of some update.} Before presenting the proof of \cref{thm: main Gaussian} in the next subsection, we shall pause to summarize these new ideas as our technical contributions.

\subsubsection{New Ideas for Proving \cref{eq:main-thm-convergence}}\label{sec:newideas}
Prior works relied on \cref{lem: full quantile} to relate the full-sample quantile to the uncorrupted quantile $\tilde{Q}$ (which can be further bounded by \cref{lem: A and ideal quantile}). Such a relation no longer holds arbitrarily for quantiles computed from $D$ samples with $D\ll \beta m$, as the entire quantile subsample can be corrupted and the subsampled quantile can take an arbitrary value. Fortunately, we can still bound our subsampled quantile from both sides by the uncorrupted quantile, with high probability, using the Chernoff bound. 


\begin{lem} \label{lem: sub quantile}
Given $\bm{x}_k$, $q \in (0,1)$, $\e{l} \in (0,q)$, $\e{u} \in (0,1-q)$ and $\beta \in (0, \min\{q - \e{l}, 1-q - \e{u}\})$, 
    with respect to the randomness in the selection of subsample $\subsample{i}{k+1}$, we have the following statements: 
    \begin{itemize}
    \item {\it (Upper bound)}  With probability at least $1- \exp(- \D(q \| q + \e{u})  D)$ , we have
    \begin{equation} \label{eq: sub quantile upper bound}
        Q_q\left(\bm{x}_k,\subsample{i}{k+1}\right) \leq \tilde{Q}_{q+\beta + \e{u}}\left(\bm{x}_k\right)
    \end{equation} 

    \item  {\it (Lower bound)} With probability at least $1- \exp(- \D(q \|q - \e{l})  D)$, we have
    \begin{equation} \label{eq: sub quantile lower bound}
       Q_q\left(\bm{x}_k,\subsample{i}{k+1}\right)\ge\tilde{Q}_{\frac{q-\beta - \e{l}}{1 - \beta}}\left(\bm{x}_k, B^c\right) \ge \tilde{Q}_{q-\beta - \e{l}}\left(\bm{x}_k\right)  
    \end{equation}
    \end{itemize} 
\end{lem}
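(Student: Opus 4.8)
The plan is to reduce each of the two inequalities to a statement about how many of the $D$ i.i.d.\ uniform draws in $\subsample{i}{k+1}$ fall into a suitable index set, and then to control that count with the sharp Chernoff estimate of \cref{thm: chernoff}. The one structural fact that makes this work is that on an \emph{uncorrupted} row $i\in B^c$ the realized residual $|b_i-\langle\bm{a}_i,\bm{x}_k\rangle|$ coincides with the ideal residual $|\langle\bm{x}_k-\bm{x}^*,\bm{a}_i\rangle|$, whereas there are at most $|B|=\beta m$ corrupted rows, whose residuals I treat as adversarial. Throughout I use that the $q$-quantile of a multi-set is its $qD$-th smallest entry (recall $qD$ is assumed integer), so $Q_q(\bm{x}_k,\subsample{i}{k+1})\le t$ is equivalent to ``at least $qD$ subsampled residuals are $\le t$'', while $Q_q(\bm{x}_k,\subsample{i}{k+1})\ge s$ is equivalent to ``fewer than $qD$ of them are $<s$''.

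For the \textbf{upper bound} I would fix $t:=\tilde{Q}_{q+\beta+\e{u}}(\bm{x}_k)$ and set $G:=\{i\in[m]:|b_i-\langle\bm{a}_i,\bm{x}_k\rangle|\le t\}$. By the definition of $t$ as the $(q+\beta+\e{u})m$-th smallest ideal residual over $[m]$, at least $(q+\beta+\e{u})m$ indices have ideal residual $\le t$; discarding the at most $\beta m$ corrupted ones leaves at least $(q+\e{u})m$ indices of $B^c$, all lying in $G$. Thus $p:=|G|/m\ge q+\e{u}$, and since the draws are i.i.d.\ uniform, the count $Y$ of subsampled indices in $G$ is $\Bin{D}{p}$. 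The event $\{Q_q\le t\}$ fails only on $\{Y<qD\}\subseteq\{Y\le qD\}$; applying the lower-tail bound of \cref{thm: chernoff} (legitimate since $qD\le pD$) and the monotonicity of $\D(q\,\|\,\cdot)$ on $[q,1]$ gives $\mathbb{P}(Y\le qD)\le\exp(-D\,\D(q\,\|\,p))\le\exp(-D\,\D(q\,\|\,q+\e{u}))$, which is \cref{eq: sub quantile upper bound}.

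For the \textbf{lower bound}, the deterministic inequality $\tilde{Q}_{\frac{q-\beta-\e{l}}{1-\beta}}(\bm{x}_k,B^c)\ge\tilde{Q}_{q-\beta-\e{l}}(\bm{x}_k)$ follows from \cref{lem: full quantile} applied with the level $q-\e{l}$ in place of $q$ (both sides are the order statistic at the common absolute rank $(q-\beta-\e{l})m$, and passing from the sub-multiset $B^c$ to all of $[m]$ at that rank can only decrease the value). For the probabilistic part I set $s:=\tilde{Q}_{\frac{q-\beta-\e{l}}{1-\beta}}(\bm{x}_k,B^c)$ and $H:=\{i\in[m]:|b_i-\langle\bm{a}_i,\bm{x}_k\rangle|<s\}$. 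Fewer than $(q-\beta-\e{l})m$ indices of $B^c$ have ideal residual below $s$, and at most $\beta m$ indices are corrupted, so $|H|<(q-\e{l})m$ and the count $Z$ of subsampled indices in $H$ is $\Bin{D}{p'}$ with $p'<q-\e{l}$. The event $\{Q_q\ge s\}$ fails only on $\{Z\ge qD\}$; the upper-tail bound of \cref{thm: chernoff} (legitimate since $qD\ge p'D$) together with the monotonicity of $\D(q\,\|\,\cdot)$ on $[0,q]$ yields $\mathbb{P}(Z\ge qD)\le\exp(-D\,\D(q\,\|\,p'))\le\exp(-D\,\D(q\,\|\,q-\e{l}))$, which is \cref{eq: sub quantile lower bound}.

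The step I expect to be most delicate is the bookkeeping that converts the nominal quantile levels into the binomial success probabilities: the slacks $\e{u},\e{l}$ must simultaneously absorb the uniform-sampling fluctuation (handled by Chernoff) and the shift by $\beta$ caused by the $\beta m$ corrupted rows, which is exactly why the thresholds sit at the levels $q+\beta+\e{u}$ and $\frac{q-\beta-\e{l}}{1-\beta}$ rather than at $q\pm\e{}$. Getting every strict/non-strict inequality in the right direction (so that the failure events are precisely the one-sided binomial tails to which \cref{thm: chernoff} applies) and selecting the correct monotone branch of the KL divergence are the remaining points needing care; the hypothesis $\beta<\min\{q-\e{l},\,1-q-\e{u}\}$ is what keeps both threshold levels inside $(0,1)$, so that the quantiles and the associated probabilities $p,p'$ are well defined.
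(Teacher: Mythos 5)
Your proposal is correct and follows essentially the same route as the paper: identify an index set of size at least $(q+\e{u})m$ (resp.\ at most $(q-\e{l})m$) whose realized residuals are controlled via the ideal quantiles after discarding/absorbing the $\beta m$ corrupted rows, reduce the quantile event to a binomial count of subsample hits, and invoke \cref{thm: chernoff}. The only cosmetic difference is that for the lower bound you count hits in the small-residual set and use monotonicity of $\D(q\|\cdot)$, whereas the paper counts hits in the complementary large-residual set and uses the symmetry $\D(1-q\|1-q+\e{l})=\D(q\|q-\e{l})$; the two are equivalent.
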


\begin{proof}

   For the lower bound of $Q_q\left(\bm{x}_k,\subsample{i}{k+1}\right)$, it suffices that more than $(1-q)D$ indices in $\subsample{i}{k+1}$ fall in the set
    \[
        S_{l} := \left\{ i \in B^c : \left|\left\langle \bm{x}_k - \bm{x}^*, \bm{a}_i \right\rangle\right| \geq \tilde{Q}_{\frac{q-\beta - \e{l}}{1 - \beta}}\left(\bm{x}_k, B^c\right) \right\},
    \]
    which contains at least $(1 - q + \e{l}+\beta)\frac{|B^c|}{1-\beta} - \beta m = (1 - q + \e{l})m$ indices. Let $Y_j$ indicate whether $i_j^{(k+1)}$ belongs to these $(1 - q + \e{l})m$ indices,  then $\sum_{j=1}^D Y_j \sim \Bin{D}{1 - q + \e{l}}$. By \cref{thm: chernoff}, we have
    \[
        \mathbb{P}\left( \sum_{j=1}^D Y_j \leq (1 - q) D \right)
        \leq \exp(- \D(1 - q \| 1 - q + \e{l}) D), \quad \D(1 - q \| 1 - q + \e{l}) = \D( q \| q - \e{l}) > 0.
    \]
    Moreover, observe that $\tilde{Q}_{q-\beta - \e{l}}\left(\bm{x}_k\right) \leq \tilde{Q}_{\frac{q-\beta - \e{l}}{1 - \beta}}\left(\bm{x}_k, B^c\right)$, since the $(q - \beta - \e{l})m$-th smallest ideal residual among all indices $[m]$ cannot exceed the $(q - \beta - \e{l})m$-th smallest among the uncorrupted set $B^c$.

    For the upper bound, it suffices that at least $q D$ indices in $\subsample{i}{k+1}$ fall in
     \[
        S_{u} := \left\{i \in [m]: \left|\left\langle \bm{x}_k - \bm{x}^*, \bm{a}_i \right\rangle\right| \leq \tilde{Q}_{q+\beta+\e{u}}\left(\bm{x}_k\right) \right\},
        \]
    which has cardinality at least $(q+\beta+\e{u})m-\beta m=(q+\e{u})m$. Let $Y_j$ indicate whether $i_j^{k+1}$ belongs to these $(q + \e{u})m$ indices, then $\sum_{j=1}^D Y_j \sim \Bin{D}{q + \e{u}}$. By \cref{thm: chernoff}, we have
    \[
        \mathbb{P}\left( \sum_{j=1}^D Y_j  \leq  q D \right)
        \leq  \exp(- \D(q \| q + \e{u}) D).
    \]
    The proof is complete. 
\end{proof}

Our overall approach for proving \cref{eq:main-thm-convergence} remains similar to prior works, in that the two central steps are controlling the impact of selecting a corrupted row and utilizing the expected contraction induced by accepting an uncorrupted row (due to the Strohmer-Vershynin bound). Nonetheless, each of these two steps involves new technicalities, as we present below.

The key to controlling the impact of the update sample being  corrupted  is an appropriate upper bound on the subsampled quantile. As mentioned, this does not hold arbitrarily, and we shall rely on the probabilistic upper bound \cref{eq: sub quantile upper bound} from \cref{lem: sub quantile}.  Focusing on the first $T$ iterations, we advocate to work with a failure event $\Omega$ defined as \cref{eq: sub quantile upper bound} fails for some $k < T$, whose probability is bounded by a union bound; see \cref{lem: failure probability small}.

Then we seek to bound $ \mathbb{E}(\left\|\bm{X}_T - \bx^*\right\|^2 1_{\Omega^c})$. Due to the indicator function, it is unclear what per-iteration contraction we can use to obtain the desired bound. Specifically, 
previous works \cite{haddock_quantilebased_2022,haddock_subsampled_2023,steinerberger2023quantile} showed $\mathbb{E}(\|\bX_{k+1}-\bx^*\|^2)\le (1-\frac{c}{n})\|\bX_{k}-\bx^*\|^2$ and iterated this bound. Unfortunately, this is no longer feasible in our subsampled setting.\footnote{To see this, one may use   \cref{lem: corrupted error}--\cref{lem: prob-disaster-event} to reach the lower bound
\begin{align*}
    \mathbb{E}\big(\|\bX_{T+1}-\bx^*\|^2\big) \ge \Big(\frac{\beta}{2}\Big)^{D+1}\min_{i\in B}|\epsilon_i|^2
\end{align*}
and then set the corruption large enough. 
}
Our remedy is to define an event $S_k$ for the $k$-th iteration in \cref{eq: def for Sk_relax} and a stopping time $\tau$ in \cref{eq:def_tau2 3}, and then relate these to $\Omega^c$ in \cref{eq:tau2 and Sk}. Further using some concepts from stochastic process theory \cite{durrett2019probability}, we observe that the one-step contraction bound on $\mathbb{E}\big[\|\bX_{k+1}-\bx^*\|^21_{S_{k+1}}\big]$ in \cref{ifwehave} can be iterated to achieve our goal; see \cref{lem: induction}. 
Similar arguments were employed in \cite{tan_phase_2019} to address a similar issue in phase retrieval---the error contraction was only established within a neighborhood of the true signal, yet the randomized Kaczmarz iterates do not reside in this neighborhood arbitrarily. 

Therefore, it remains to establish \cref{ifwehave} in which the contraction stems from an application of Strohmer-Vershynin bound to the case of accepting an uncorrupted row. Note that exactly $qm$ rows are acceptable under full-sample quantile, the authors of \cite{haddock_quantilebased_2022}  could easily find a set of acceptable 
uncorrupted rows of size $(q-2\beta)m$ (called $I_2$ therein) and then apply Strohmer-Vershynin bound to the uncorrupted linear system constituted by the rows in $I_2$. However, this becomes more intricate under our subsampled quantile, since it is much less clear which rows will be accepted. Our idea is to identify {\it a large enough uncorrupted subset of the acceptable rows}. To find such a subset, we first establish a lower bound on the subsampled quantile by \cref{eq: sub quantile lower bound} from \cref{lem: sub quantile} and then consider the uncorrupted rows with residuals smaller than such a lower bound; see the lower bound event in \cref{eq: def for Sk_lower} and the set $S$ in \cref{eq:def_S_set}. 
We manage to ensure $|S|=\Omega(m)$ by carefully setting the relevant parameters.

\subsection{Proof of \cref{thm: main Gaussian}} \label{sec: Proof of the main}
We define the failure event $\Omega$ as 
\begin{align} 
    \Omega:= \big\{\text{\cref{eq: sub quantile upper bound} fails for some $0 \leq k \leq T-1$}\big\}.\label{defineOmega}
\end{align} 
Applying a union bound over $T$ iterations, we see that the probability of $\Omega$ can be made arbitrarily small by choosing $D$ sufficiently large.
\begin{lem} \label{lem: failure probability small}
We have 
$$
\mathbb{P}(\Omega) \leq T \exp( -\D(q \| q + \e{u}) D) ,
$$
 where $\e{u}$ is a constant as in \cref{lem: sub quantile}. 
\end{lem}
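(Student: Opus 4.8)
The plan is to obtain \cref{lem: failure probability small} by a straightforward union bound over the $T$ iterations, where the per-iteration bound is supplied by the upper-bound half \cref{eq: sub quantile upper bound} of \cref{lem: sub quantile}. Concretely, for each $0\le k\le T-1$ let $E_k$ denote the event that \cref{eq: sub quantile upper bound} fails at iteration $k$, i.e. the event
\[
Q_q\big(\bX_k,\subsample{i}{k+1}\big) > \tilde{Q}_{q+\beta+\e{u}}(\bX_k).
\]
By definition $\Omega=\bigcup_{k=0}^{T-1}E_k$, so subadditivity gives $\mathbb{P}(\Omega)\le\sum_{k=0}^{T-1}\mathbb{P}(E_k)$, and it suffices to show $\mathbb{P}(E_k)\le\exp(-\D(q\|q+\e{u})D)$ for each $k$.

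The one genuine subtlety is that \cref{lem: sub quantile} is stated for a \emph{fixed} (deterministic) vector $\bx_k$, whereas the iterate $\bX_k$ appearing in $E_k$ is random and correlated with all of the algorithm's past choices. First I would introduce the natural filtration $\{\calF_k\}$, with $\calF_k$ generated by all subsamples and update indices drawn through step $k$, so that $\bX_k$ is $\calF_k$-measurable. The key structural fact from \cref{algo:quantileRK} is that the quantile subsample $\subsample{i}{k+1}$ used at iteration $k$ is drawn \emph{afresh}, independently of $\calF_k$. Hence, conditioning on $\calF_k$ freezes $\bX_k$ to a deterministic realization $\bx_k$ while leaving $\subsample{i}{k+1}$ a genuinely uniform, independent subsample to which \cref{lem: sub quantile} applies verbatim. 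This yields the conditional bound
\[
\mathbb{P}\big(E_k\mid\calF_k\big)\le \exp\big(-\D(q\|q+\e{u})D\big),
\]
and taking expectations (the tower property) removes the conditioning: $\mathbb{P}(E_k)=\mathbb{E}[\mathbb{P}(E_k\mid\calF_k)]\le\exp(-\D(q\|q+\e{u})D)$. Summing over $k=0,\dots,T-1$ gives the claimed $\mathbb{P}(\Omega)\le T\exp(-\D(q\|q+\e{u})D)$.

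I expect the main (and only real) obstacle to be cleanly articulating this conditioning/independence step, so that the application of \cref{lem: sub quantile} to the random iterate is rigorously justified rather than merely asserted; everything else is a routine union bound. A secondary point worth stating explicitly is that $\e{u}\in(0,1-q)$ is held fixed throughout (as in \cref{lem: sub quantile}), so $\D(q\|q+\e{u})>0$ is a strictly positive constant and the per-iteration failure probability decays exponentially in $D$, which is precisely what makes the factor $T$ harmless once $D=\Omega(\log T)$.
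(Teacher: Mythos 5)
Your proof is correct and follows essentially the same route as the paper: a union bound over the $T$ iterations combined with the per-iteration bound from \cref{lem: sub quantile}. The conditioning-on-$\mathcal{F}_k$/tower-property step you spell out is exactly the justification the paper leaves implicit when it applies \cref{lem: sub quantile} (stated for fixed $\bx_k$) to the random iterate $\bX_k$, so your write-up is, if anything, slightly more careful than the paper's.
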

\begin{proof}
    $$
    \begin{aligned} \label{eq: failure probability}
        \mathbb{P}(\Omega) & =  \mathbb{P}( \cup_{k=0}^{T-1} \{\mbox{\cref{eq: sub quantile upper bound} fails at step $k$} \} )                                             \\
                                  & \leq \sum_{k=0}^{T-1} \mathbb{P}(\{\mbox{\cref{eq: sub quantile upper bound} fails at step $k$} \} )                  \\
                                  &\stackrel{\note{a}}{\leq}   T \exp( -\D(q \| q + \e{u}) D) ,                                               \\
    \end{aligned}
$$ 
where $(a)$ follows by \cref{lem: sub quantile}.
\end{proof}
We define the event that \cref{eq: sub quantile upper bound} holds for step $k$ as
\begin{equation} \label{eq: def for Sk_relax}
    S_{k+1} := \left\{ Q_q(\bm{X}_{k}, \subsample{i}{k+1}) \leq \tilde{Q}_{q + \beta + \e{u}}(\bm{X}_{k})\right\},
\end{equation}
as well as introduce the stopping time as
\begin{equation} \label{eq:def_tau2 3}
    \tau := \inf_{ t \geq 1} \left\{ t : Q_q(\bm{X}_{t-1}, \subsample{i}{t}) > \tilde{Q}_{q + \beta + \e{u}}(\bm{X}_{t-1}) \right\}.
\end{equation}
These conventions can further characterize $\Omega$ in \cref{defineOmega} as  
\begin{align} \label{eq:tau2 and Sk}
\{\tau > k + 1\} = \{\tau > k, S_{k+1} \mbox{ holds}\} \subset \{\tau > k\}\qquad\text{and}  \qquad \Omega^c = \{\tau > T\}.
\end{align}
These observations allow us to bound $ \mathbb{E}(\left\|\bm{x}_T - \bx^*\right\|^2 1_{\Omega^c})$ by induction.
To formalize this, we introduce the standard filtration from stochastic processes~\cite{durrett2019probability}. For each $k$, let $\mathcal{F}_k$ denote the $\sigma$-algebra capturing the history up to $\bX_{k}$, generated by all update and quantile sample selections:
\begin{align*}
    \updateid{1},\, i^{(1)}_1, \ldots, i^{(1)}_D;\;
    \updateid{2},\, i^{(2)}_1, \ldots, i^{(2)}_D;\;
    \ldots;\;
    \updateid{k},\, i^{(k)}_1, \ldots, i^{(k)}_D, \label{Fk}
\end{align*}
where these random variables are sampled uniformly from $\{1, \ldots, m\}$. We then have:
\begin{lem} \label{lem: induction}
If there exists a constant $c_2 > 0$ such that 
\begin{equation}
    \label{ifwehave}
    \mathbb{E}\left[
        \|\bm{X}_{k+1}-\target\|^2 1_{S_{k+1}}
        \mid \bm{X}_k = \bm{x}_k
    \right] \leq \left(1 - \frac{c_2}{n}\right) \|\bm{x}_k - \target\|^2
\end{equation}
holds for all fixed $\bx_k$ and $0\leq k \leq T-1$,
then we have
$$
  \mathbb{E}\left(\left\|\bm{X}_T - \target\right\|^2 1_{\Omega^c}\right) \leq \left(1-\frac{c_2}{n}\right)^T \left\|\init - \target\right\|^2.
  $$
  
\end{lem}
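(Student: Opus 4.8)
The plan is to convert the pointwise one-step bound \cref{ifwehave} into a bound on the full product of indicators via a supermartingale-style induction against the filtration $\{\mathcal{F}_k\}$. To this end I would introduce the auxiliary sequence
$$Z_k := \|\bX_k - \target\|^2 \, 1_{\{\tau > k\}}, \qquad k = 0, 1, \ldots, T,$$
and record two elementary facts. Since $\tau \geq 1$ always, we have $Z_0 = \|\init - \target\|^2$; and since $\Omega^c = \{\tau > T\}$ by \cref{eq:tau2 and Sk}, the quantity we must bound is exactly $\mathbb{E}(Z_T)$. The proof then reduces to the single one-step inequality $\mathbb{E}(Z_{k+1} \mid \mathcal{F}_k) \leq (1 - \tfrac{c_2}{n}) Z_k$ for $0 \leq k \leq T-1$: taking total expectations and chaining this over $k$ from $0$ to $T-1$ immediately gives $\mathbb{E}(Z_T) \leq (1-\tfrac{c_2}{n})^T \|\init - \target\|^2$, which is the claim.

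To establish the one-step inequality I would first use $\{\tau > k+1\} = \{\tau > k\} \cap S_{k+1}$ from \cref{eq:tau2 and Sk} to write $Z_{k+1} = \|\bX_{k+1} - \target\|^2 \, 1_{\{\tau > k\}} \, 1_{S_{k+1}}$. The crucial structural observation is that $\{\tau > k\} = \bigcap_{s=1}^{k} S_s$ is $\mathcal{F}_k$-measurable, since each $S_s$ is determined by $\bX_{s-1}$ together with the quantile subsample drawn at step $s \leq k$, all of which are encoded in $\mathcal{F}_k$; by contrast $\bX_{k+1}$ and the event $S_{k+1}$ depend on $\mathcal{F}_k$ only through $\bX_k$, the remaining dependence being on the fresh samples $\updateid{k+1}, i^{(k+1)}_1, \ldots, i^{(k+1)}_D$ that are independent of $\mathcal{F}_k$. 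Pulling the $\mathcal{F}_k$-measurable factor out of the conditional expectation yields
$$\mathbb{E}(Z_{k+1} \mid \mathcal{F}_k) = 1_{\{\tau > k\}} \, \mathbb{E}\!\left(\|\bX_{k+1} - \target\|^2 \, 1_{S_{k+1}} \mid \mathcal{F}_k\right).$$

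The step I expect to demand the most care is justifying that the remaining conditional expectation collapses to one conditioned on $\bX_k$ alone, so that the pointwise hypothesis \cref{ifwehave} can be invoked. Here I would use the independence of the step-$(k+1)$ samples from $\mathcal{F}_k$: because $(\bX_{k+1}, 1_{S_{k+1}})$ is a fixed measurable function of $\bX_k$ and of those independent fresh samples, its conditional law depends on $\mathcal{F}_k$ only through $\bX_k$, whence $\mathbb{E}(\|\bX_{k+1} - \target\|^2 1_{S_{k+1}} \mid \mathcal{F}_k) = g(\bX_k)$ with $g(\bx_k) := \mathbb{E}(\|\bX_{k+1} - \target\|^2 1_{S_{k+1}} \mid \bX_k = \bx_k)$. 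Applying \cref{ifwehave} gives $g(\bX_k) \leq (1 - \tfrac{c_2}{n}) \|\bX_k - \target\|^2$, and hence
$$\mathbb{E}(Z_{k+1} \mid \mathcal{F}_k) \leq \left(1 - \frac{c_2}{n}\right) 1_{\{\tau > k\}} \|\bX_k - \target\|^2 = \left(1 - \frac{c_2}{n}\right) Z_k.$$
Taking expectations and iterating this inequality over the $T$ steps closes the induction and completes the proof.
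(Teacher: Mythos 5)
Your proposal is correct and follows essentially the same route as the paper's own proof: both hinge on the identity $\{\tau>k+1\}=\{\tau>k\}\cap S_{k+1}$, the $\mathcal{F}_k$-measurability of $1_{\{\tau>k\}}$, the Markov property to collapse conditioning on $\mathcal{F}_k$ to conditioning on $\bX_k$, and then an application of \cref{ifwehave} iterated over the $T$ steps. Packaging the argument as the supermartingale-type bound $\mathbb{E}(Z_{k+1}\mid\mathcal{F}_k)\le(1-\tfrac{c_2}{n})Z_k$ before taking total expectations is only a cosmetic reorganization of the paper's chain of unconditional inequalities.
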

\begin{proof}
For any $0\le k\le T-1$, we have 
\begin{equation} \label{eq: induction}
    \begin{aligned}
        & \mathbb{E}\left[\|\bm{X}_{k+1}-\target\|^2 1_{\tau>k+1} \right] \\
        & \stackrel{\note{a}}{=} \mathbb{E}\left[\|\bm{X}_{k+1}-\target\|^2 1_{\tau>k} 1_{S_{k+1}}\right] \\
        &\stackrel{\note{b}}{\leq} \mathbb{E}\left[
            \mathbb{E}\left[
                \|\bm{X}_{k+1}-\target\|^2 1_{S_{k+1}}
                \mid  \mathcal{F}_k
            \right] 1_{\tau>k}
        \right] \\
        &\stackrel{\note{c}}{\leq} \mathbb{E}\left[
            \mathbb{E}\left[
                \|\bm{X}_{k+1}-\target\|^2 1_{S_{k+1}}
                \mid \bm{X}_k
            \right] 1_{\tau>k}
        \right] \\
         &\stackrel{\note{d}}{\leq} \left(1 - \frac{c_2}{n}\right) \mathbb{E}\left[ \|\bX_{k}-\target\|^2
            1_{\tau>k}
        \right],
    \end{aligned}
\end{equation}
where (a) follows from \cref{eq:tau2 and Sk}, (b) follows from the fact that $1_{\tau > k}$ is measurable with respect to $\mathcal{F}_k$, (c) follows from the Markov property of $\{\bm{X}_k\}$ (i.e., $\bX_{k+1}$ depends on $\calF_k$ only through $\bX_k$), (d) follows from \cref{ifwehave}. 
Now further using $\Omega^c=\{\tau>T\}$, we can iterate \cref{eq: induction} to arrive at
\begin{align*}
  &\mathbb{E}\left(\left\|\bm{X}_T - \target\right\|^2 1_{\Omega^c}\right) = 
  \mathbb{E}\left(\left\|\bm{X}_T - \target\right\|^2 1_{\tau>T}\right) \le \Big(1-\frac{c_2}{n}\Big)  \mathbb{E}\left(\left\|\bm{X}_{T-1} - \target\right\|^2 1_{\tau>T-1}\right) \\
  &\le  \Big(1-\frac{c_2}{n}\Big)  ^2\mathbb{E}\left(\left\|\bm{X}_{T-2} - \target\right\|^2 1_{\tau>T-2}\right) \le \cdots \le \Big(1-\frac{c_2}{n}\Big)^T\|\bx_0-\bx^*\|^2,
\end{align*}
as desired. 
\end{proof}

    
With the above preparations, we are in a position to prove \cref{thm: main Gaussian}.
\begin{proof}[Proof of \cref{thm: main Gaussian}]
Before proceeding to the main arguments, we pause to define some constants involved in our proof. 

\textbf{Defining the Constants:} 
By \cref{assum1}, 
 it holds for $\alpha_0 = q/2$ that
\begin{equation} \label{eq:constants}
    \frac{\sigma_{\max}(\bm{A}) \sqrt{n}}{\sqrt{m}} \leq C \quad \text{and} \quad \smin{\alpha_0} \geq \frac{\sqrt{2\pi}(q/2)^{3/2}}{24} > 0,
\end{equation}
  where $C$ is an absolute constant.
Choose $\e{l} = q/4 \in (0, q - \alpha_0)$, and define
\begin{equation}
    p_l := \exp(-\D(q \|  q - \e{l})  D), \qquad p_l^c := 1 - p_l \geq 1 - \exp(-\D(q \|  q - \e{l})  )> 0,
    \label{eq:pl-plc}
\end{equation}
where $\D(q \|  q - \e{l}) = \D(q \|  \frac{3}{4}q) > 0$.  We assume
\begin{equation}
    \beta < q - \alpha_0 - \e{l} = q/4,
    \label{eq: valid conditions beta low}
\end{equation}
and note that this leads to
\begin{equation} 
    \alpha := q - \beta - \e{l} > \alpha_0, \qquad \smin{\alpha} \geq \smin{\alpha_0} > 0.
    \label{eq:alpha-lowerbound}
\end{equation}
We also define 
\begin{equation} \label{eq: C_1}
    C_1 := \max\left\{ \frac{(\sigma_{\max}(\bm{A}) \sqrt{n/m})^2}{\frac{1}{4}p_l^c \sigma_{\min, \alpha_0}^2}, \left(\frac{2\sigma_{\max}(\bm{A}) \sqrt{n/m}}{\frac{1}{4}p_l^c \sigma_{\min, \alpha_0}^2}\right)^2 \right\} \leq C_1'\qquad\text{and}\qquad \alpha' := C_1' \beta,
\end{equation}
where $C_1'$ can be a constant depending only on $q$ from \cref{eq:constants,eq:pl-plc}.
Assume $\beta$ is small enough such that 
\begin{equation}
    \beta < \frac{1-q}{C_1' + 1},
    \label{eq: valid conditions beta upper}
\end{equation}
then we have
\begin{equation} \label{eq:def eu}
    \e{u} := 1 - q - \beta - \alpha' > 0, \quad \beta + \alpha' = (1 + C_1')\beta = 1 - q - \e{u}< 1-q.
\end{equation}

\textbf{Bounding $\mathbb{P}(\Omega)$:} 
The failure event $\Omega$, $S_k$, and $\tau$ can be defined based on the constants $q,\beta,\e{u}$ defined above, as in the beginning of \cref{sec: Proof of the main}. 
$\mathbb{P}(\Omega)$ can be upper bounded by \cref{lem: failure probability small} and by definition of $\D(q \| q + \e{u})$:
\begin{align*}
    \mathbb{P}(\Omega) & \leq T \exp\left(-\D(q \| q + \e{u}) D\right) \\
    &\stackrel{\note{a}}{=} T  \exp\left(-\left( (1-q) \ln \frac{1-q}{(1+C_1')\beta} + q \ln \frac{q}{1-(1+C_1')\beta} \right) D\right) \\
    &\stackrel{\note{b}}{=} T  \exp\left(-\left( (1-q) \ln \frac{1}{\beta} - C_2 \right) D\right) \\
    &\stackrel{\note{c}}{\leq} T  \exp\left(-\left( \frac{1}{2}(1-q) \ln \frac{1}{\beta} \right) D\right), \label{eq: constant q and D}
\end{align*}
where (a) uses \cref{eq:def eu}, (b) defines $C_2 := - (1-q) \ln (\frac{1-q}{1+C_1'}) - q \ln(\frac{q}{1-(1+C_1')\beta})$, 
and (c) notes that for some $c_q>0$, for all
\begin{equation}\label{eq:beta-upperbound-lnabsorb}
    \beta < c_q,
\end{equation}
we have
$
(1-q) \ln \frac{1}{\beta} - C_2 > \frac{1}{2}(1-q)\ln\frac{1}{\beta} > 0.
$
Moreover, there exists a constant $C_1 = \frac{6}{(1-q)/4}> 0$ such that if $D \geq C_1 \frac{\log T}{\log(1/\beta)}$ and $T>1$, then
\begin{equation}
    \mathbb{P}(\Omega) \leq T^{-5} \exp\left(-\frac{1-q}{4} \log(1/\beta) \cdot D\right) \leq \frac{1}{2}.
    \label{eq:main-failure-prob in proof}
\end{equation}

\textbf{Bounding $\mathbb{E}(\left\|\bm{X}_T - \bx^*\right\|^2 1_{\Omega^c})$}: By \cref{lem: induction}, it suffices to show a one-step contraction:
\[
    \mathbb{E}\left[
        \|\bm{X}_{k+1}-\target\|^2 1_{S_{k+1}}
        \mid \bm{X}_k = \bm{x}_k
    \right] \leq \left(1 - \frac{c_2}{n}\right) \|\bm{x}_k - \target\|^2
\]
for some $c_2 > 0$. We proceed by conditioning on whether the update sample $\updateid{k+1}$ is corrupted:
 \begin{align}
        & \mathbb{E}\left[\|\bm{X}_{k+1}-\target\|^2 1_{S_{k+1}} \mid \bm{X}_k = \bm{x}_k\right] \nonumber\\
        &= \beta\, \mathbb{E}\left[\|\bm{X}_{k+1}-\target\|^2 1_{S_{k+1}} \mid \bm{X}_k = \bm{x}_k,\, \updateid{k+1} \in B\right] \nonumber\\
        &\quad + (1 - \beta)\, \mathbb{E}\left[\|\bm{X}_{k+1}-\target\|^2 1_{S_{k+1}} \mid \bm{X}_k = \bm{x}_k,\, \updateid{k+1} \in B^c\right] \nonumber \\
        :&= \beta I_1 + (1 - \beta) I_2. \label{eq: expectation_decompose_beta}
    \end{align}

    \subsubsection*{Case I: Corrupted Update Sample $\updateid{k+1} \in B$}
    With the aid of $1_{S_{k+1}}$, we can proceed as follows:
        \begin{align}
            I_1 & = \mathbb{E}\left[\left\|\boldsymbol{X}_{k+1}-\target\right\|^2 1_{S_{k+1}} \mid \boldsymbol{X}_k=\boldsymbol{x}_k, \updateid{k+1}\in B\right] \nonumber\\
                & = \frac{1}{\beta m^{D+1}}\sum_{i \in B} \sum_{\subsample{(i')}{k+1}}\mathbb{E}\left[ \|\be_{k+1}\|^2 1_{S_{k+1}}\mid \bm{X}_k=\bm{x}_k, \updateid{k+1} = i, \subsample{i}{k+1} =\subsample{(i')}{k+1}\right] \nonumber \\ 
                & \stackrel{\note{a}}{\leq} \sum_{i \in B} \sum_{\subsample{(i')}{k+1}}
            \left( \left\|\bm{e}_k\right\|^2 + \tilde{Q}_{q+\beta+\e{u}}(\bm{x}_{k})^2 + 2 \tilde{Q}_{q+\beta+\e{u}}(\bm{x}_{k}) \left|\left\langle\bm{e}_k, \bm{a}_{i}\right\rangle\right| \right) \frac{1}{\beta m^{D+1}}                                        \nonumber \\
                & \stackrel{\note{b}}{\leq} \sum_{i \in B}  \left(\|\bm{e}_k\|^2 + \frac{(\Phi_{1 - \alpha'})^2}{n} \|\bm{e}_k\|^2
            + \frac{2(\Phi_{1 - \alpha'})}{\sqrt{n}} \|\bm{e}_k\|  |\langle\bm{a}_i, \bm{e}_k \rangle| \right) \frac{1}{\beta m}  \nonumber \\ 
                & \stackrel{\note{c}}{\leq} \left( 1 + \frac{\Phi_{1 - \alpha'}^2 + 2\Phi_{1 - \alpha'}c_B }{n } \right) \|\bm{e}_k\|^2,   \label{eq:I1-last}
        \end{align}
    where the sum $\sum_{\subsample{(i')}{k+1}}$ is over all possible quantile subsamples of size $D$ from $[m]$ (i.e., $m^D$ terms).
     Step (a) uses the definition of $S_{k+1}$, which ensures the subsampled quantile is upper-bounded by $\tilde{Q}_{q+\beta+\e{u}}(\bm{x}_k)$, allowing application of \cref{lem: quantile bound error}. Step (b) applies \cref{lem: A and ideal quantile} with $q' = q + \beta + \e{u} = 1 - \alpha'$ and $\Phi_{1 - \alpha'} = \sigma_{\max}(\bm{A}) \sqrt{n/m} / \sqrt{\alpha'}$. Step (c) uses the bound $ \frac{1}{\beta m} \sum_{i \in B}  |\langle\bm{e}_k, \bm{a}_i\rangle| \leq \frac{\sigma_{\max}(\bm{A}) \sqrt{n/m}}{\sqrt{n \beta }}\|\bm{e}_k\|$ (by \cref{lem: absolute sum bound}), and defines $c_B := \sigma_{\max}(\bm{A}) \sqrt{n/m} / \sqrt{\beta}$.
    
    \subsubsection*{Case II: Uncorrupted Update Sample $\updateid{k+1} \in B^c$}

    In this case, the error is non-expansive. To ensure contraction, we need to identify {\it a large enough uncorrupted subset of the acceptable rows} for applying \cref{Stbound}. According to \cref{algo:quantileRK}, acceptable rows have residuals smaller than the subsampled quantile.
    To facilitate analysis, we define the event $S_{k+1}^*$, where the subsampled quantile exceeds $\tilde{Q}_{\frac{q-\beta - \e{l}}{1 - \beta}}(\bm{x}_k, B^c)$. By the lower bound in \cref{lem: sub quantile}, this event occurs with probability at least $1 - p_l$ ($p_l$ defined in \cref{eq:pl-plc}):
     \begin{equation} \label{eq: def for Sk_lower}
        S_{k+1}^* := \left\{ \subsample{i}{k+1}: Q_q\left(\bm{x}_{k},\subsample{i}{k+1}\right) \ge \tilde{Q}_{\frac{q-\beta - \e{l}}{1 - \beta}}\left(\bm{x}_{k}, B^c\right)  \right\}, \quad \mathbb{P}\left(\left(S_{k+1}^*\right)^c\right) \leq p_l.
    \end{equation}
 And we define $S$ as the set of indices in $B^c$ that are guaranteed to be accepted when $S_{k+1}^*$ holds:
    \begin{equation}
        S := \left\{ i \in B^c : \left| \langle \bm{x}_k - \bm{x}^*, \bm{a}_i \rangle \right| \leq \tilde{Q}_{\frac{q-\beta - \e{l}}{1 - \beta}}\left(\bm{x}_k, B^c\right) \right\}
        \label{eq:def_S_set},
    \end{equation}
     where $|S| = (q - \beta - \e{l}) m = \alpha m \geq \alpha_0 m = qm/2$ (assumed to be integer for simplicity). Its complement, $S^c = B^c \setminus S$, consists of uncorrupted indices with large residuals. Thus, the expectation can be decomposed as follows:
    \begin{align}
        I_2 & =  \mathbb{E}\left[\left\|\boldsymbol{X}_{k+1}-\target\right\|^2 1_{S_{k+1}}  \mid \boldsymbol{X}_k=\boldsymbol{x}_k, \updateid{k+1}\in B^c\right]                              \nonumber  \\
        & \leq \mathbb{E}\left[\left\|\boldsymbol{X}_{k+1}-\target\right\|^2  \mid \boldsymbol{X}_k=\boldsymbol{x}_k, \updateid{k+1}\in B^c\right]                                \nonumber   \\
        & =
        \mathbb{E}\left[\left\|\boldsymbol{X}_{k+1}-\target\right\|^2  
        \mid \boldsymbol{X}_k=\boldsymbol{x}_k, \updateid{k+1} \in S \right] \frac{\alpha}{1 - \beta}  \nonumber\\
        & + \mathbb{E}\left[\left\|\boldsymbol{X}_{k+1}-\target\right\|^2 \mid \boldsymbol{X}_k=\boldsymbol{x}_k, \updateid{k+1} \in S^c \right] \left(1 - \frac{\alpha}{1 - \beta}\right),                                          \nonumber   \\
        &\stackrel{\note{a}}{\leq}
         \mathbb{E}\left[\left\|\boldsymbol{X}_{k+1}-\target\right\|^2  
        \mid \boldsymbol{X}_k=\boldsymbol{x}_k, \updateid{k+1} \in S \right] + \left\|\be_{k}\right\|^2 \left(1 - \frac{\alpha}{1 - \beta}\right),
        \label{eq:I2-medium}
    \end{align}
    where (a) uses that the error is non-expansive for $\updateid{k+1} \in S^c \subset B^c$.
     Then we have:
    \begin{align*}
    &\mathbb{E}\left[\left\|\boldsymbol{X}_{k+1}-\target\right\|^2 
                \mid \boldsymbol{X}_k=\boldsymbol{x}_k, \updateid{k+1} \in S \right] \\
        & \stackrel{\note{a}}{=}
        \mathbb{E}\left[\left\|\boldsymbol{X}_{k+1}-\target\right\|^2 
                \mid \boldsymbol{X}_k=\boldsymbol{x}_k, \updateid{k+1} \in S, \subsample{i}{k+1} \in S_{k+1}^*\right] \mathbb{P}(S_{k+1}^*)\\
        &+ \mathbb{E}\left[\left\|\boldsymbol{X}_{k+1}-\target\right\|^2 
                \mid \boldsymbol{X}_k=\boldsymbol{x}_k, \updateid{k+1} \in S, \subsample{i}{k+1} \in (S_{k+1}^*)^c\right] \mathbb{P}((S_{k+1}^*)^c)  \\
         &\stackrel{\note{b}}{\leq}   \mathbb{E}\left[\left\|\bm{e}_{k} - \langle\bm{e}_k, \bm{a}_{\updateid{k+1}}\rangle \bm{a}_{\updateid{k+1}}\right\|^2 
                \mid \boldsymbol{X}_k=\boldsymbol{x}_k, \updateid{k+1} \in S\right] \mathbb{P}(S_{k+1}^*)\\
        &+ \mathbb{E}\left[\left\|\be_k\right\|^2 
                \mid \boldsymbol{X}_k=\boldsymbol{x}_k, \updateid{k+1} \in S\right] \mathbb{P}((S_{k+1}^*)^c) \\   
        &\stackrel{\note{c}}{\leq} 
        \mathbb{E} \left[ (1 - p_l) \left\|\bm{e}_{k} - \langle\bm{e}_k, \bm{a}_{\updateid{k+1}}\rangle \bm{a}_{\updateid{k+1}}\right\|^2  + p_l \|\be_k\|^2  \mid \bX_k = \bx_k, \updateid{k+1} \in S\right],
        \end{align*}
    where (a) uses the independence of $\subsample{i}{k+1}$ and $\updateid{k+1}$. In (b), when $\updateid{k+1} \in S$ and $S_{k+1}^*$ holds, the update is accepted and the error becomes $\left\|\be_{k+1}\right\|^2 = \left\|\be_k - \langle\be_k, \ba_{\updateid{k+1}}\rangle \ba_{\updateid{k+1}}\right\|^2$; otherwise, the error is non-expansive since $\updateid{k+1} \in B^c$. In (c), we use \cref{eq: def for Sk_lower} and $\left\|\be_k - \langle\be_k, \ba_{\updateid{k+1}}\rangle \ba_{\updateid{k+1}}\right\|^2 \leq \left\|\be_k\right\|^2$. Substituting the final estimation into \cref{eq:I2-medium} yields:
        \begin{align*}
                 I_2   & \leq
                \mathbb{E}\left[\left\|\bm{e}_{k} - \langle\bm{e}_k, \bm{a}_{\updateid{k+1}}\rangle \bm{a}_{\updateid{k+1}}\right\|^2 
                \mid \boldsymbol{X}_k=\boldsymbol{x}_k, \updateid{k+1} \in S \right] \frac{\alpha}{1 - \beta} p_l^c\\
                & +   \mathbb{E}\left[\left\|\bm{e}_{k}\right\|^2 
                \mid \boldsymbol{X}_k=\boldsymbol{x}_k, \updateid{k+1} \in S \right] \frac{\alpha}{1 - \beta}p_l   \\
                    & + \mathbb{E}\left[\left\|\bm{e}_{k} \right\|^2  \mid \boldsymbol{X}_k=\boldsymbol{x}_k, \updateid{k+1} \in S^c \right] \left(1 - \frac{\alpha}{1 - \beta}\right)                                             \\
                    & \leq \left(1 - \frac{\alpha}{1 - \beta}p_l^c \right)\left\|\bm{e}_k \right\|^2                                                                   +
                \underbrace{\mathbb{E}\left[\left\|\bm{e}_{k} - \langle\bm{e}_k, \bm{a}_{\updateid{k+1}}\rangle \bm{a}_{\updateid{k+1}}\right\|^2   \mid \boldsymbol{X}_k=\boldsymbol{x}_k,
                \updateid{k+1} \in S \right] }_{\leq
                 \left(1 - \frac{\sigma_{\min}^2\left(\bm{A}_{S}\right)}{\left\|\bm{A}_{S}\right\|_F^2}\right) \left\|\bm{e}_k \right\|^2 
                } \frac{\alpha}{1 - \beta}p_l^c, 
            \end{align*}
        where we apply the Strohmer-Vershynin bound in \cref{Stbound} to the submatrix $\bm{A}_{S}$ (with $|S| = \alpha m$) to estimate the contraction in the last term. By definition \cref{eq:uniform-smin} and the lower bound in \cref{eq:alpha-lowerbound}, we have $\sigma_{\min}\left(\bm{A}_{S}\right) \geq \sigma_{\min, \alpha} \sqrt{\frac{m}{n}} \geq \sigma_{\min, \alpha_0} \sqrt{\frac{m}{n}}  > 0$. Therefore,
            \begin{align}
                 I_2   & \leq \left(1 - \frac{\alpha}{1 - \beta}p_l^c \right)  \left\|\bm{e}_k \right\|^2
                + \frac{\alpha}{1 - \beta}p_l^c \left(1 - \frac{\sigma_{\min, \alpha}^2}{\alpha}\frac{1}{n}\right)\left\|\bm{e}_k \right\|^2                                                                                                    \nonumber    \\
                & \leq \left(1 - p_l^c\frac{\alpha}{1 - \beta} \frac{\sigma_{\min, \alpha}^2}{\alpha} \frac{1}{n}\right) \left\|\bm{e}_k \right\|^2                                                                                       \nonumber      \\
                & = \left(1 - p_l^c \frac{\sigma_{\min, \alpha}^2}{1 - \beta} \frac{1}{n} \right)\left\|\bm{e}_k \right\|^2. \label{eq: I2 last}
            \end{align}   
    \textbf{Combing Cases I and II}: Substituting \cref{eq:I1-last,eq: I2 last} into \cref{eq: expectation_decompose_beta}, we have
     $$
        \begin{aligned}
             & \mathbb{E}\left[\left\|\boldsymbol{X}_{k+1}-\target\right\|^2 1_{S_{k+1}} \mid \boldsymbol{X}_k=\boldsymbol{x}_k\right]  \\
             & \leq \beta \left(\left\|\bm{e}_{k}\right\|^2+ \left(\Phi_{1 - \alpha'}^2 + 2\Phi_{1 - \alpha'} c_B \right)\frac{\left\|\bm{e}_{k}\right\|^2}{n} \right)  + \left(1 - \beta\right)
            \left(1 - p_l^c    \frac{\sigma_{\min, \alpha}^2}{1 - \beta} \frac{1}{n} \right)\left\|\bm{e}_k \right\|^2                                                                 \\
             & = \left(1 -  \frac{p_l^c\sigma_{\min, \alpha}^2 - \beta\left(\Phi_{1 - \alpha'}^2 + 2\Phi_{1 - \alpha'} c_B\right)}{n} \right) \left\|\bm{e}_k\right\|^2 \\
        \end{aligned}
        $$
    Combining the upper bounds for $\beta$ from \cref{eq:beta-upperbound-lnabsorb,eq: valid conditions beta low,eq: valid conditions beta upper}, define $c_1 := \min\{q/4, \frac{1-q}{C_1' + 1}, c_q\}$. Let $c_2 := \frac{1}{2} (1 -\exp(-\D(q \| \frac{3}{4} q ))\sigma_{\min, \alpha_0}^2 > 0$. 
    Both $c_1$ and $c_2$ are positive constants depending only on $q$. For $\beta \in (0, c_1)$, the following lower bound holds:
    \begin{equation*} \label{eq: valid conditions contration}
       p_l^c \sigma_{\min, \alpha}^2 - \beta(\Phi_{1 - \alpha'}^2 + 2\Phi_{1 - \alpha'} c_B) > p_l^c \sigma_{\min, \alpha}^2 - p_l^c \frac{1}{2} \sigma_{\min, \alpha_0}^2 \geq  c_2 > 0,
    \end{equation*}
    where the first inequality follows from the definition of $C_1'$ and $\alpha'$ in \cref{eq: C_1}, and the second inequality follows from \cref{eq:alpha-lowerbound,eq:pl-plc}.
    This finishes the proof of \cref{eq:main-thm-convergence} by \cref{lem: induction}, and the proof of \cref{eq: main failure-prob-Gaussian} follows from the upper bound on the failure probability that we established in \cref{eq:main-failure-prob in proof}. 
\end{proof}

\subsection{Lower Bound}\label{sec: lower bound}
 We   assume the following on $\bA$.  With high probability, it is satisfied by $\bA$ with rows being i.i.d. uniformly distributed over $\mathbb{S}^{n-1}$ when $m\ge C_1n$ for large enough $C_1$; see \cref{lem: randomness-Gaussian}. 
 \begin{assumption}\label{assump2}
      $\bA$ has rows in $\mathbb{S}^{n-1}$ and satisfies \begin{align}\label{assum2bound}
          \Phi_{q'}=\frac{\sigma_{\max}(\bA)\sqrt{n}}{\sqrt{m}\sqrt{1-q'}}\le \frac{C_0}{\sqrt{1-q'}},\qquad\forall q'\in(0,1)
      \end{align} 
      for some absolute constant $C_0.$ 
 \end{assumption}

 We now provide the formal theorem regarding the lower bound on $D$.  
\begin{thm}\label{thm:lower}
 Consider problem \cref{eq: problem} with arbitrary $(\beta m)$-sparse $\bm{\epsilon}$ and suppose \cref{assump2} holds. Let $q \in (0,1)$, and the number of iterations $T$ be such that $T\ge  \lceil \frac{n}{\log n}\rceil$. For any $c_0\in(0,1)$, if the subsample size $D$ is a positive integer satisfying  
    $$D \le \max\Big\{\frac{c_0\log T}{\log(2/\beta)},1\Big\},$$
    then with probability at least $1- \frac{C_1}{\log n}-\exp(-\frac{\beta^2 n}{4\log n})-\exp(-\frac{\beta}{2}\frac{n}{T^{c_0}\log n})$ we have
    \[
    \|\bX_T - \target\|^2 \geq \Big(\frac{1}{2}\Big)^{\lfloor \frac{n}{\log n}\rfloor} \min_{i \in B} |\epsilon_i|^2.
    \]
 
\end{thm}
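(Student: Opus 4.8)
The plan is to exhibit an adversarial corruption and show the algorithm must fail on it. Take all nonzero entries of $\bm{\epsilon}$ equal to a common value (whose magnitude may be taken arbitrarily large, making $\min_{i\in B}|\epsilon_i|$ arbitrarily large). The argument has two halves: (I) with high probability the algorithm projects onto a corrupted row -- a \emph{disaster} -- during the last $L:=\lfloor n/\log n\rfloor$ iterations, and (II) after the last such disaster the error cannot shrink by more than a factor $2$ per step. The cornerstone of (I) is an exact identity for projecting onto a corrupted row $\updateid{k+1}=i\in B$: since $\langle \ba_i,\bm{x}_{k+1}-\target\rangle=\epsilon_i$ one gets $\|\bm{e}_{k+1}\|^2=\|\bm{e}_k\|^2+\epsilon_i^2-\langle\ba_i,\bm{e}_k\rangle^2\ge \epsilon_i^2\ge \min_{j\in B}|\epsilon_j|^2$, valid regardless of $\|\bm{e}_k\|$; I would isolate this as a ``corrupted-error'' lemma.

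For (I) I would define the disaster at step $k$ as the event that all $D+1$ sampled indices $\updateid{k+1},i_1^{(k+1)},\dots,i_D^{(k+1)}$ lie in $B$ and the update row is accepted. Conditioning on $\calF_k$, the indices are i.i.d.\ uniform, so conditioned on all lying in $B$ the residuals $\{|\langle\ba_{\updateid{k+1}},\bm{e}_k\rangle-\epsilon|,\,|\langle\ba_{i_j^{(k+1)}},\bm{e}_k\rangle-\epsilon|\}$ are i.i.d.; hence the update residual is the smallest of the $D+1$ residuals with probability at least $1/(D+1)\ge 2^{-(D+1)}$, in which case it lies below every subsampled quantile and is accepted. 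Thus the conditional disaster probability is at least $\beta^{D+1}2^{-(D+1)}=(\beta/2)^{D+1}$. The hypothesis $D\le \max\{c_0\log T/\log(2/\beta),1\}$ gives $(\beta/2)^D\ge T^{-c_0}$ in the first regime and forces $D=1$, so $(\beta/2)^{D+1}=(\beta/2)^2$, in the second; a geometric bound over the $L$ window steps (the per-step lower bound holds conditionally on $\calF_k$, so prior disasters are immaterial) shows that no disaster occurs with probability at most $\exp(-(\beta/2)T^{-c_0}L)=\exp(-\beta n/(2T^{c_0}\log n))$ or $\exp(-(\beta/2)^2 L)=\exp(-\beta^2 n/(4\log n))$ respectively -- the second and third failure terms. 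I would record this as a ``disaster-probability'' lemma.

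The crux is (II): controlling the contraction once the last corrupted acceptance $k^\ast\ge T-L$ has occurred. After $k^\ast$ every accepted step is an uncorrupted projection $\bm{e}_{k+1}=\bm{e}_k-\langle\ba_{\updateid{k+1}},\bm{e}_k\rangle\ba_{\updateid{k+1}}$, contracting by the factor $1-\langle\ba_{\updateid{k+1}},\bm{e}_k\rangle^2/\|\bm{e}_k\|^2$; to keep this at least $1/2$ I need $\langle\ba_{\updateid{k+1}},\bm{e}_k\rangle^2\le\tfrac12\|\bm{e}_k\|^2$ at each of the at most $L-1$ remaining steps. The difficulty is that $\bm{e}_k$ evolves adaptively and $\updateid{k+1}$ may coincide with a previously used index, so $\ba_{\updateid{k+1}}$ is \emph{not} independent of $\bm{e}_k$ and spherical concentration is unavailable. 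I expect this to be the main obstacle. The decisive idea is to use only the independence of the \emph{index} $\updateid{k+1}$ from $\calF_k$ together with the deterministic spectral bound of \cref{assump2}: for any fixed unit $\bm{x}$, $\frac1m\sum_{i=1}^m\langle\bm{x},\ba_i\rangle^2=\frac1m\|\bA\bm{x}\|^2\le \sigma_{\max}(\bA)^2/m\le C_0^2/n$, so conditionally on $\calF_k$ Markov's inequality yields $\mathbb{P}(\langle\ba_{\updateid{k+1}},\bm{e}_k\rangle^2>\tfrac12\|\bm{e}_k\|^2\mid\calF_k)\le 2C_0^2/n$. Crucially this holds whether or not $\updateid{k+1}$ collides with an earlier index, so it sidesteps the adaptivity entirely; a union bound over the $L$ window steps then produces the first failure term $C_1/\log n$ with $C_1=2C_0^2$.

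Finally I would combine: on the complement of the three bad events, some disaster occurs in $\{T-L,\dots,T-1\}$, so the last corrupted acceptance satisfies $T-L\le k^\ast\le T-1$ and $\|\bm{e}_{k^\ast+1}\|^2\ge\min_{i\in B}|\epsilon_i|^2$ by the corrupted-error identity; the subsequent at most $L-1$ steps are uncorrupted, each either rejected (no change) or accepted with contraction factor at least $1/2$, giving $\|\bm{e}_T\|^2\ge (1/2)^{L-1}\|\bm{e}_{k^\ast+1}\|^2\ge (1/2)^{\lfloor n/\log n\rfloor}\min_{i\in B}|\epsilon_i|^2$. The condition $T\ge\lceil n/\log n\rceil$ is exactly what is needed for the window of $L$ steps to fit inside $\{0,\dots,T-1\}$. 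I anticipate all routine pieces (the two small lemmas, the Chernoff/geometric bookkeeping) to be straightforward, with the averaged-correlation bound of the previous paragraph being the one genuinely new ingredient.
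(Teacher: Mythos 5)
Your proposal is correct and follows essentially the same route as the paper's proof: locate the last projection onto a corrupted row (which with high probability occurs within the final window of $\approx n/\log n$ iterations), lower-bound the error at that moment by $\min_{i\in B}|\epsilon_i|^2$, and show that each subsequent step can shrink the squared error by at most a factor of $2$, yielding exactly the same three failure terms. The two places where you argue differently are both sound and are really the paper's mechanisms in different clothing: where \cref{lem: prob-disaster-event} splits $B$ into halves $B_1,B_2$ ordered by residual size, you use exchangeability of the $D+1$ residuals induced by i.i.d.\ index sampling to get acceptance probability at least $\beta^{D+1}/(D+1)\ge(\beta/2)^{D+1}$; and where \cref{lem: fast-contraction} controls $|\langle\ba_{\updateid{k+1}},\be_k\rangle|$ via the quantile $\tilde{Q}_{1-\kappa}$ together with \cref{lem: A and ideal quantile} and \cref{assump2} (with $\kappa=\Theta(1/n)$), you apply Markov's inequality to $\frac{1}{m}\|\bA\be_k\|^2\le C_0^2\|\be_k\|^2/n$, which is the same spectral bound exploited through the uniform randomness of the fresh index. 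One point you should fix: the theorem asserts the bound for \emph{arbitrary} $(\beta m)$-sparse corruption, whereas you open by restricting to corruptions whose nonzero entries share a common value, so as written you prove a narrower statement. The restriction is unnecessary: conditioned on all $D+1$ sampled indices landing in $B$, the residuals are i.i.d.\ because the \emph{indices} are i.i.d.\ uniform on $B$ and the residual is a fixed function of the index given $\bx_k$ — this holds for any corruption values, so your exchangeability argument (and hence the entire proof) goes through verbatim once you delete that opening assumption.
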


As a consequence, under large enough $\min_{i\in B}|\epsilon_i|$, $\|\bX_T - \target\|^2$ can be made arbitrarily large with $1-o(1)$ probability as long as 
\begin{align} \label{scaling}
    \frac{\log n}{\beta^2n}=o(1)\qquad\text{and}\qquad \frac{T^{c_0}\log n}{\beta n}=o(1).
\end{align}
Note that such scaling assumptions are mild when $c_0\in(0,1)$ is small. For instance, if $\beta$ is a given positive constant, then (\cref{scaling}) holds as long as $T\le n^{\xi}$ for some $\xi<c_0^{-1}$, where $c_0^{-1}$ is a large constant. It also encompasses some settings with $\beta = o(1)$, with a specific example being
$$\beta= \Theta(n^{-\xi})\qquad\text{and}\qquad T\le n^{\xi_1}$$
 for some positive $\xi,\xi_1$ satisfying $\xi<\frac{1}{2}$ and $\xi_1<\frac{1-\xi}{c_0}$. In addition, when $\frac{\log T}{\log(2/\beta)}$ is small enough, our result ensures the failure of QRK under $D=1$ (recall that when $qD<1$ we will let the subsampled quantile be the smallest residual in the subsample).

\subsection{Proof of \cref{thm:lower}}\label{sec:lowerproof}

The approach to proving \cref{thm:lower} is first to identify a step at which the iterate is projected onto a corrupted row (we will use the last one over the first $T$ iterations) and then demonstrate that the subsequent iterations cannot significantly reduce the approximation error. The following lemma describes the minimum impact of projecting onto a corrupted row.

\begin{lem} \label{lem: corrupted error}
    If $\bX_{k+1}$ is obtained by projecting $\bx_k$ onto a corrupted row, which is equivalent to  
\begin{equation} \label{eq: corrupted_update_condition}
\updateid{k+1} \in B \quad \text{and} \quad \left|\left\langle \ba_{\updateid{k+1}}, \bx_{k} \right\rangle - b_{\updateid{k+1}}\right| \leq Q\left(\bm{x}_k,\subsample{i}{k+1}\right),
\end{equation}
then:
\begin{align} \label{eq: corrupted error}
\| \bX_{k+1} - \target \| \geq \min_{i \in B} |\epsilon_i|.
\end{align}
\end{lem}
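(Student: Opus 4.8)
The plan is to exploit the defining geometric property of an accepted Kaczmarz step. Because the hypothesis \eqref{eq: corrupted_update_condition} guarantees that the residual of the update sample lies at or below the subsampled quantile $Q(\bm{x}_k, \subsample{i}{k+1})$, the \emph{accept} branch of \cref{algo:quantileRK} is the one executed, so $\bm{X}_{k+1}$ is produced by the projection update $\bm{X}_{k+1} = \bm{x}_k - h_{\updateid{k+1}} \ba_{\updateid{k+1}}$ rather than by the stall $\bm{X}_{k+1} = \bm{x}_k$. Writing $i := \updateid{k+1} \in B$ and recalling that the rows are unit-norm, the first step is to verify that this update places $\bm{X}_{k+1}$ exactly on the hyperplane $\{\bm{u} : \langle \ba_i, \bm{u}\rangle = b_i\}$; since $h_i = \langle \ba_i, \bm{x}_k\rangle - b_i$ and $\|\ba_i\|^2 = 1$, a one-line computation gives
\[
\langle \ba_i, \bm{X}_{k+1}\rangle = \langle \ba_i, \bm{x}_k\rangle - h_i \|\ba_i\|^2 = b_i .
\]

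Second, I would translate this hyperplane membership into a statement about the error vector via the corrupted model \eqref{eq: problem}. Since $i \in B$, the $i$-th measurement satisfies $b_i = \langle \ba_i, \target\rangle + \epsilon_i$ with $\epsilon_i \neq 0$, so
\[
\langle \ba_i, \bm{X}_{k+1} - \target\rangle = \langle \ba_i, \bm{X}_{k+1}\rangle - \langle \ba_i, \target\rangle = b_i - \langle \ba_i, \target\rangle = \epsilon_i .
\]
In words, an accepted corrupted projection forces the new error vector to have a component of magnitude exactly $|\epsilon_i|$ along the unit direction $\ba_i$.

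Finally, Cauchy--Schwarz together with $\|\ba_i\| = 1$ yields $|\epsilon_i| = |\langle \ba_i, \bm{X}_{k+1} - \target\rangle| \le \|\ba_i\|\,\|\bm{X}_{k+1} - \target\| = \|\bm{X}_{k+1} - \target\|$, and since $i \in B$ we may lower-bound $|\epsilon_i| \ge \min_{j \in B} |\epsilon_j|$ to conclude. I do not anticipate a genuine technical obstacle here: the core relation $\langle \ba_i, \bm{X}_{k+1} - \target\rangle = \epsilon_i$ is an exact identity, not an estimate, so the only points requiring care are bookkeeping ones---confirming that \eqref{eq: corrupted_update_condition} selects the projection branch, using the unit-norm normalization so the projection lands precisely on the solution hyperplane of row $i$, and invoking the corrupted model to identify the residual of $\bm{X}_{k+1}$ against $\target$ with $\epsilon_i$. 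The lemma is nonetheless the engine of the lower bound, since it certifies that a single accepted corrupted step pushes the iterate at least $\min_{i\in B}|\epsilon_i|$ away from $\target$, regardless of how accurate $\bm{x}_k$ was beforehand.
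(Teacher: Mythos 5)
Your proposal is correct and follows essentially the same route as the paper's proof: the accepted projection places $\bX_{k+1}$ on the hyperplane $\langle \ba_{\updateid{k+1}}, \bX_{k+1}\rangle = b_{\updateid{k+1}}$, whence $\langle \ba_{\updateid{k+1}}, \bX_{k+1}-\target\rangle = \epsilon_{\updateid{k+1}}$, and Cauchy--Schwarz with unit-norm rows gives the bound. The only difference is that you spell out the intermediate bookkeeping (branch selection, hyperplane membership, the corrupted-model identity) that the paper leaves implicit.
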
  
\begin{proof}
 Under the condition, $\bX_{k+1}$ is projected onto the corrupted row with $\langle \ba_{\updateid{k+1}}, \bX_{k+1} \rangle = \bb_{\updateid{k+1}}$, and we have
$$\left\| \bX_{k+1} - \target \right\| \geq \left|\left\langle \ba_{\updateid{k+1}}, \bX_{k+1} - \target \right\rangle \right| = \left|\epsilon_{\updateid{k+1}}\right| \geq \min_{i \in B} \left|\epsilon_i\right|,$$ 
 as desired. 
\end{proof}
Further, 
such event occurs with probability lower bounded as follows.
\begin{lem} \label{lem: prob-disaster-event}
Given $\bx_k$, the event in \cref{eq: corrupted error} occurs with probability at least
$(\frac{\beta}{2})^{D+1}$.
\end{lem}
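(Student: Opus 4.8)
The plan is to exploit the independence between the update sample $\updateid{k+1}$ and the quantile subsample $\subsample{i}{k+1}$, both drawn i.i.d. uniformly on $[m]$, and to condition on the fixed $\bm{x}_k$ so that the absolute residuals $h_i := |\langle \bm{a}_i, \bm{x}_k\rangle - b_i|$, $i\in[m]$, become deterministic numbers. The event in \cref{eq: corrupted_update_condition} is then $\{\updateid{k+1}\in B\}\cap\{h_{\updateid{k+1}} \le Q_q(\bm{x}_k, \subsample{i}{k+1})\}$. Conditioning on $\updateid{k+1}=i$, I would first record an elementary sufficient condition: if every one of the $D$ residuals appearing in the subsample is at least $h_i$, then the subsample minimum is at least $h_i$, hence so is its $q$-quantile (the $q$-quantile is by definition an order statistic $z^*_{\lfloor qD\rfloor}$, or $z^*_1$ when $qD<1$, and in either case dominates the minimum). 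Since the $D$ subsample indices are i.i.d. uniform and independent of $\updateid{k+1}$, this yields
\[
\mathbb{P}\big(h_i \le Q_q(\bm{x}_k, \subsample{i}{k+1})\big) \ge \Big(\tfrac{|\{l \in [m]: h_l \ge h_i\}|}{m}\Big)^D .
\]

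Next I would sum this lower bound over the corrupted rows. Writing the residuals of the rows in $B$ in non-decreasing order as $h_{i_{(1)}} \le \cdots \le h_{i_{(\beta m)}}$, the row $i_{(s)}$ satisfies $|\{l \in [m]: h_l \ge h_{i_{(s)}}\}| \ge \beta m - s + 1$, because the corrupted rows ranked $s, s+1, \dots, \beta m$ already have residuals at least $h_{i_{(s)}}$ (and this remains valid under ties, since all rows ranked at or above $s$ qualify). Using $\mathbb{P}(\updateid{k+1}=i)=1/m$ and the independence noted above, the target probability $P$ obeys
\[
P = \frac{1}{m}\sum_{i\in B}\mathbb{P}\big(h_i \le Q_q(\bm{x}_k, \subsample{i}{k+1})\big) \ge \frac{1}{m^{D+1}}\sum_{s=1}^{\beta m}(\beta m - s + 1)^D = \frac{1}{m^{D+1}}\sum_{t=1}^{\beta m} t^D .
\]

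Finally I would lower-bound the power sum by retaining only its larger half: the at least $\beta m/2$ terms with $t \ge \beta m/2$ each exceed $(\beta m/2)^D$, so $\sum_{t=1}^{\beta m} t^D \ge \frac{\beta m}{2}\big(\frac{\beta m}{2}\big)^D = (\beta m)^{D+1}/2^{D+1}$, and therefore $P \ge (\beta/2)^{D+1}$, as claimed. None of the steps is genuinely hard; the only points requiring care are the choice of the sufficient event ``all subsample residuals exceed $h_i$'', which makes the argument uniform over the quantile level $q$ (including the degenerate regime $qD<1$), and the observation that summing over \emph{all} corrupted rows—rather than bounding a single worst-case row—is precisely what upgrades the naive $\frac{1}{m}\beta^D$ into the tight $(\beta/2)^{D+1}$.
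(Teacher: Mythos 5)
Your proof is correct and follows essentially the same route as the paper's: both exploit the independence of the update index $r_{k+1}$ and the quantile subsample, and both reduce to the sufficient condition that every subsampled residual dominates the residual of the chosen corrupted row, so that the subsample minimum (hence its $q$-quantile) exceeds it. The only difference is bookkeeping: the paper splits $B$ into a lower half $B_1$ and an upper half $B_2$ by residual size and lower-bounds the probability of the single event $\{r_{k+1}\in B_1\}\cap\{\text{all subsample indices in } B_2\}$, whereas you sum the per-row bound over all of $B$ and recover the same $\left(\beta/2\right)^{D+1}$ via the power-sum estimate $\sum_{t=1}^{\beta m} t^D \ge \frac{\beta m}{2}\left(\frac{\beta m}{2}\right)^D$ --- the same halving idea, applied at the end rather than at the start.
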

\begin{proof}
By \cref{lem: corrupted error}, it suffices to show that  \cref{eq: corrupted_update_condition} occurs with probability at least $(\frac{\beta}{2})^{D+1}$. Given $\bx_k$, we can always find $B_{1},B_{2}\subset B$ such that $|B_1|\ge \frac{\beta m}{2}$, $|B_2|\ge\frac{\beta m}{2}$, and the residuals in $B_2$ are uniformly larger than those in $B_1$:
$$\left|\left\langle \ba_{i_1},\bx_k\right\rangle - b_{i_1}\right|\le \left|\left\langle \ba_{i_2},\bx_k\right\rangle - b_{i_2}\right|,\qquad\forall i_1\in B_1,~i_2\in B_2.$$
 A sufficient condition for \cref{eq: corrupted_update_condition} is that the update index $\updateid{k+1}$ is selected from $B_{1}$, and all indices in the quantile subsample $\subsample{i}{k+1}$ are from $B_{2}$. Since $\updateid{k+1}$ and $\subsample{i}{k+1}$ are chosen independently from $\Uniform$, this occurs with probability at least $(\frac{\beta}{2})^{D+1}$.
\end{proof}

After  \cref{eq: corrupted error} happens, the error may decrease in subsequent iterations when  uncorrupted rows are accepted. Hence, we need to study the overall error reduction in these iterations. This amounts to analyzing the performance limit of standard RK, and we note that 
a lower bound   $\mathbb{E}\|\bX_{k+1} - \target\|^2 \ge (1 - \frac{c}{n}) \mathbb{E} \|\bX_{k} - \target\|^2$ was established in \cite[Thm. 3]{strohmer2009randomized}; see also \cite{steinerberger2021randomized}. However, this does not immediately transfer to a probabilistic statement. To that end, we introduce an additional parameter $\kappa$ and develop the following lemma. 

 \begin{lem} \label{lem: fast-contraction}
 Given $\bx_k$ and assume that $\bX_{k+1}$ is  obtained from $\bx_k$ by projecting onto an uncorrupted row. For $\kappa \in (0,1)$, the error satisfies 
\begin{equation} \label{eq:fast-contraction-error}
    \|\bX_{k+1} - \target\|^2 \geq  \left(1 - \frac{C}{\kappa n}\right) \|\be_{k}\|^2,
\end{equation}
 with probability at least $1 - \kappa$, where $C > 0$ is an absolute constant.
 \end{lem}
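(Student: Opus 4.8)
The plan is to reduce this probabilistic lower bound to a first-moment (averaging) estimate combined with Markov's inequality. The starting point is the exact one-step identity for an uncorrupted projection. Since $\updateid{k+1} = j \in B^c$ means $b_j = \langle \ba_j, \target\rangle$, we have $h_j = \langle \ba_j, \be_k\rangle$, and because the rows have unit norm the update $\bX_{k+1} = \bx_k - \langle \ba_j, \be_k\rangle \ba_j$ is an orthogonal projection; hence $\|\bX_{k+1} - \target\|^2 = \|\be_k\|^2 - \langle \ba_j, \be_k\rangle^2$. Thus lower bounding $\|\bX_{k+1}-\target\|^2$ is exactly the same as upper bounding the removed energy $\langle \ba_j, \be_k\rangle^2$, where the relevant randomness is the uniform choice of the update index $j$ over the uncorrupted set $B^c$.

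Next I would control the average removed energy. Summing over $j \in B^c$ gives $\sum_{j\in B^c}\langle \ba_j, \be_k\rangle^2 = \|\bA_{B^c}\be_k\|^2 \le \sigma_{\max}^2(\bA)\|\be_k\|^2$, since $\bA_{B^c}$ is a row-submatrix of $\bA$. \cref{assump2} is equivalent to $\sigma_{\max}(\bA)\sqrt{n/m}\le C_0$ (the factor $\sqrt{1-q'}$ cancels in $\Phi_{q'}$, so $\Phi_{q'}\sqrt{1-q'}=\sigma_{\max}(\bA)\sqrt{n/m}$ is independent of $q'$), whence $\sigma_{\max}^2(\bA)\le C_0^2 m/n$. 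Dividing by $|B^c| = (1-\beta)m$ yields the mean bound $\frac{1}{|B^c|}\sum_{j\in B^c}\langle \ba_j, \be_k\rangle^2 \le \frac{C_0^2}{(1-\beta)n}\|\be_k\|^2 =: \frac{C}{n}\|\be_k\|^2$ for an absolute constant $C$ (using that $\beta$ is bounded away from $1$). Equivalently, $\mathbb{E}\big[\langle \ba_{\updateid{k+1}}, \be_k\rangle^2\big] \le \frac{C}{n}\|\be_k\|^2$ when $\updateid{k+1}$ is uniform on $B^c$; the same bound can be read directly off \cref{lem: absolute sum bound}.

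Finally I would invoke Markov's inequality on the nonnegative variable $\langle \ba_{\updateid{k+1}}, \be_k\rangle^2$: for any $\kappa\in(0,1)$, with probability at least $1-\kappa$ we have $\langle \ba_{\updateid{k+1}}, \be_k\rangle^2 \le \frac{1}{\kappa}\cdot\frac{C}{n}\|\be_k\|^2$, and substituting into the one-step identity gives $\|\bX_{k+1}-\target\|^2 \ge \big(1 - \frac{C}{\kappa n}\big)\|\be_k\|^2$, as claimed. The one point requiring care is the conditioning implicit in ``projecting onto an uncorrupted row'': an accepted uncorrupted row additionally satisfies $|\langle \ba_j, \be_k\rangle| \le Q_q(\bx_k,\subsample{i}{k+1})$, so acceptance only biases $j$ toward smaller removed energy and can only help the lower bound; hence it is legitimate to bound the tail using the unconditional uniform-over-$B^c$ average. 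There is no serious obstacle here, and the main conceptual point is that only a first-moment bound is available. This is precisely what the $\kappa^{-1}$ dependence (rather than $\log\kappa^{-1}$) reflects: a single unfortunate alignment of $\be_k$ with one row can remove an $\Theta(1)$ fraction of the energy, so no concentration improvement is possible and Markov's inequality is essentially tight for this step.
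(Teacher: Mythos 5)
Your proposal is correct and takes essentially the same route as the paper: both start from the identity $\|\bX_{k+1}-\target\|^2=\|\be_k\|^2-\langle \ba_{\updateid{k+1}},\be_k\rangle^2$ and show that, with probability at least $1-\kappa$ over the choice of the update index, the removed energy is at most $\frac{C}{\kappa n}\|\be_k\|^2$, using exactly the spectral bound $\sigma_{\max}(\bA)\sqrt{n/m}\le C_0$ from \cref{assump2}. The only differences are presentational---the paper funnels the tail bound through the uncorrupted quantile $\tilde{Q}_{1-\kappa}$ and \cref{lem: A and ideal quantile} (itself a Markov-type counting bound), whereas you apply Markov's inequality directly to the uniform index over $B^c$---and your explicit observation that conditioning on acceptance only biases the index toward smaller removed energy handles a point the paper's proof leaves implicit.
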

\begin{proof}
Since $\bX_{k+1}$ is obtained from projecting $\bx_k$ onto an uncorrupted row, we have
\[
\left\|\bX_{k+1} - \target\right\|^2 = \left\|\be_{k} - \langle \ba_{\updateid{k+1}}, \be_k \rangle \ba_{\updateid{k+1}} \right\|^2 = \left\|\be_k\right\|^2 - \left|\langle \ba_{\updateid{k+1}}, \be_k \rangle\right|^2, 
\]
i.e., that the reduction in error at each step depends on $\left|\langle \ba_{\updateid{k+1}}, \be_k \rangle\right|$. 
For any $\kappa > 0$, the probability of the event 
$\{|\langle \ba_{\updateid{k+1}}, \be_k \rangle| \le  \tilde{Q}_{1 - \kappa}(\bm{x}_{k}) \}$ is $1-\kappa$, and we assume such event holds. Therefore, we have  
\begin{equation}
\left\|\bX_{k+1} - \target\right\|^2 \geq \left\|\be_k\right\|^2 - \tilde{Q}_{1 - \kappa}\left(\bm{X}_{k}\right)^2  \stackrel{\note{a}}{\ge} \left(1-\frac{\Phi_{1-\kappa}^2}{n}\right)\left\|\be_k\right\|^2\stackrel{\note{b}}{\ge} \left(1 - \frac{C}{\kappa n}\right) \left\|\be_k\right\|^2,
\label{eq:lower-bound-error-decrease}
\end{equation}   
where (a) follows from \cref{lem: A and ideal quantile} and (b) follows from  \cref{assum2bound} in \cref{assump2}. 
\end{proof}
We are now ready to prove \cref{thm:lower}. 
 \begin{proof}[Proof of \cref{thm:lower}]
We let $k^*$ be the largest $k\le T-1$ such that $\bX_{k^*+1}$ is obtained from $\bX_{k^*}$ by projecting onto some corrupted row. We first show that $k^*\ge T - \lceil \frac{n}{\log n}\rceil$ holds with high probability. To see this, we notice that $k^*<T - \lceil \frac{n}{\log n}\rceil$ implies that the last $\lceil\frac{n}{\log n}\rceil$ iterations (among the $T$ iterations) are not projections onto corrupted row, hence by \cref{lem: prob-disaster-event} we have 
\begin{align}
    \mathbb{P}\Big(k^* <T-\Big\lceil\frac{n}{\log n}\Big\rceil\Big)\le \Big(1-\big(\frac{\beta}{2}\big)^{D+1}\Big)^{\frac{n}{\log n}}\le \exp\Big(-\big(\frac{\beta}{2}\big)^{D+1}\frac{n}{\log n}\Big).
    \nonumber
\end{align}
Further using $D\le \max\{\frac{c_0\log T}{\log(2/\beta)},1\}$, which gives $(\frac{\beta}{2})^D\ge \min\{T^{-c_0},\frac{\beta}{2}\}$, we reach 
\begin{align}
   \mathbb{P}\Big(k^* <T-\Big\lceil\frac{n}{\log n}\Big\rceil\Big)\le \exp\Big(-\frac{\beta^2n}{4\log n}\Big) + \exp\Big(-\frac{\beta}{2}\frac{n}{T^{c_0}\log n}\Big).
    \nonumber
\end{align}
Hence, we can proceed on the event $k^*\ge T- \lceil\frac{n}{\log n}\rceil$ by ruling our the probability of $\exp(-\frac{\beta^2n}{4\log n}) + \exp(-\frac{\beta}{2}\frac{n}{T^{c_0}\log n})$. By \cref{lem: corrupted error}, the error of $\bX_{k^*+1}$ is lower bounded by $\|\bX_{k^*+1}-\bx^*\|^2\ge \min_{i\in B}|\epsilon_i|^2$. 

We further show the subsequent steps can not reduce such error too much. We set $\kappa = \frac{2C}{n}$ in \cref{lem: fast-contraction} to yield that for any $k$, if  $\bX_{k+1}$ is obtained from $\bx_{k}$ by projecting onto an uncorrupted row, the error satisfies $\|\bX_{k+1}-\bx^*\|^2 \geq \frac{1}{2} \|\bx_k-\bx^*\|^2$ with probability at least $1 - \frac{2C}{n}$ for some absolute constant $C$. Starting from $\bX_{k^*+1}$, we will run no more than $T-k^*-1\le \lfloor\frac{n}{\log n}\rfloor$ iterations to obtain $\bX_T$. By the definition of $k^*$, none of these iterations performs a projection onto a corrupted row. Hence, these iterations either do not change the iterate or perform projection onto an uncorrupted row. We only need to consider the projections onto uncorrupted rows that are ``nontrivial,'' and the number of such iterations is bounded by $\lfloor\frac{n}{\log n}\rfloor$.  By a union bound, with probability at least $1- \frac{2C}{n}\cdot \lfloor\frac{n}{\log n}\rfloor\ge 1-\frac{2C}{\log n}$, each of these projections (onto some uncorrupted row) can not reduce the squared $\ell_2$ error by a factor beyond $\frac{1}{2}$. 
Overall, we will have 
\begin{align}
    \| \bX_T-\bx^*\|^2 \ge \Big(\frac{1}{2}\Big)^{\lfloor\frac{n}{\log n}\rfloor}\|\bX_{k^*+1}-\target\|^2\ge \Big(\frac{1}{2}\Big)^{\lfloor\frac{n}{\log n}\rfloor}\min_{i\in B}|\epsilon_i|^2,
     \nonumber
\end{align}
with probability at least $1 - \exp(-\frac{\beta^2n}{4\log n}) - \exp(-\frac{\beta}{2}\frac{n}{T^{c_0}\log n})-\frac{2C}{\log n}$. 
\end{proof}

\section{Numerical Simulations}\label{sec:numerics}

The experiments presented in this section were conducted using MATLAB R2022b on an iMac (2023) featuring an Apple M3 chip (8-core CPU) and 24 GB of unified memory. In our simulations, we generate the rows of the matrix $\bm{A} \in \mathbb{R}^{m \times n}$ with $m = 50000$ and $n = 100$ as i.i.d. samples uniformly distributed on the unit sphere $\mathbb{S}^{n-1}$. The target vector $\bm{x}^* \in \mathbb{R}^n$ is also sampled uniformly from $\mathbb{S}^{n-1}$, while the initial iterate $\bm{x}_0$ is set to the zero vector. For a given corruption level $\beta \in (0,1)$, the corruption vector $\bm{\epsilon}$ is generated to corrupt the first $\beta m$ measurements and its nonzero entries are i.i.d. sampled uniformly from the interval $[-5,5]$. 
All experiments are run for $200n = 20000$ iterations with $q = 0.5$. The error $\|\bX_{k} - \target\|$ is recorded at each iteration $k$ and averaged over 10 independent trials.  The MATLAB codes for reproducing our simulations are available online.\footnote{\url{https://github.com/wtree101/matlab-rk-analysis}}


\subsection{Convergence under $D=\log(T),~\alpha m,~m$}
    We fix $\beta = 0.01$ and compare {approximation} error versus iteration and runtime for subsample sizes $D \in \{4, 40,5000, 50000\}$ in \Cref{fig:qrk_error_vs_iter_and_time}. Here, $D\in\{4,40\}$ simulates the logarithmically small subsample size in our theory, while $D=5000$ simulates the $\Theta(m)$ subsampled size in \cite{haddock_subsampled_2023}, and $D=50000$ simulates the full-sample quantiles in \cite{haddock_quantilebased_2022}. Across all $D$, the contraction rates per iteration are similar, but the computational cost per iteration grows with $D$, and hence regarding the runtime QRK with $D=5000,50000$ is much slower. 
    While our \cref{algo:quantileRK} draws the quantile subsample with replacement (as with \cite{haddock_quantilebased_2022}), we also tested QRK that draws the subsample without replacement (as adopted by \cite{haddock_subsampled_2023}). We find that both versions have  similar numerical performance; see \Cref{fig:qrk_error_vs_iter_and_time} (a).
    \begin{figure}[htbp]
                                \centering
                                                \begin{tabular}{cc}
                                                    \includegraphics[width=0.47\textwidth]{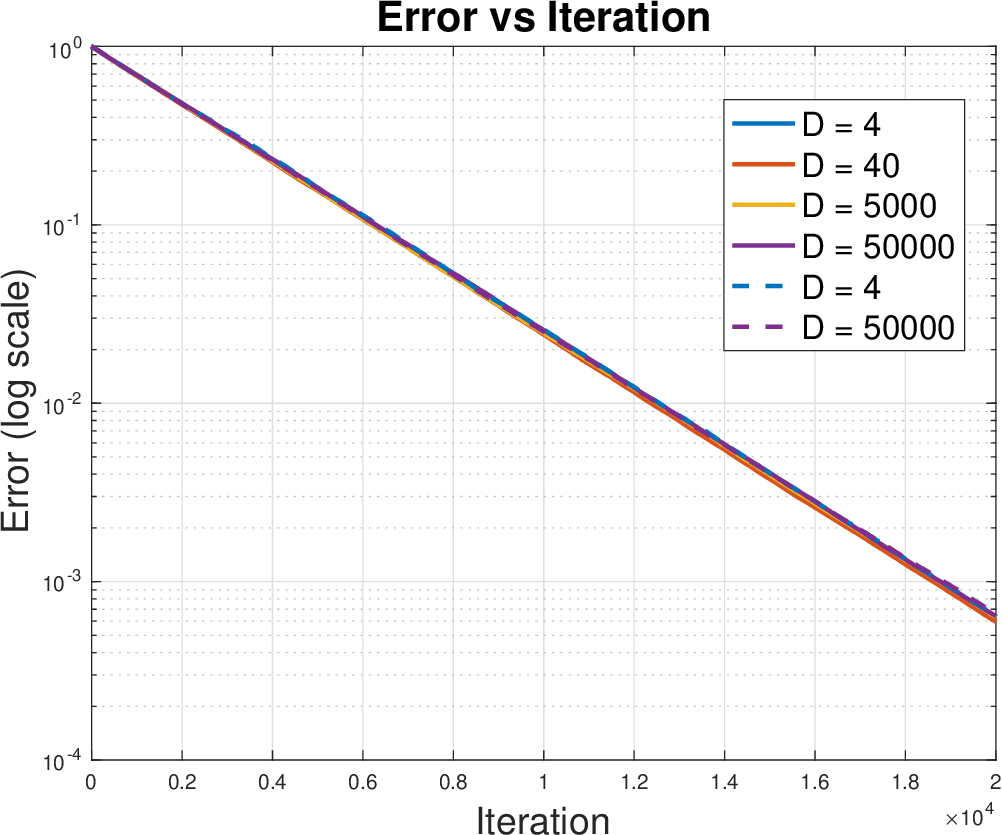} &
                                                    \includegraphics[width=0.47\textwidth]{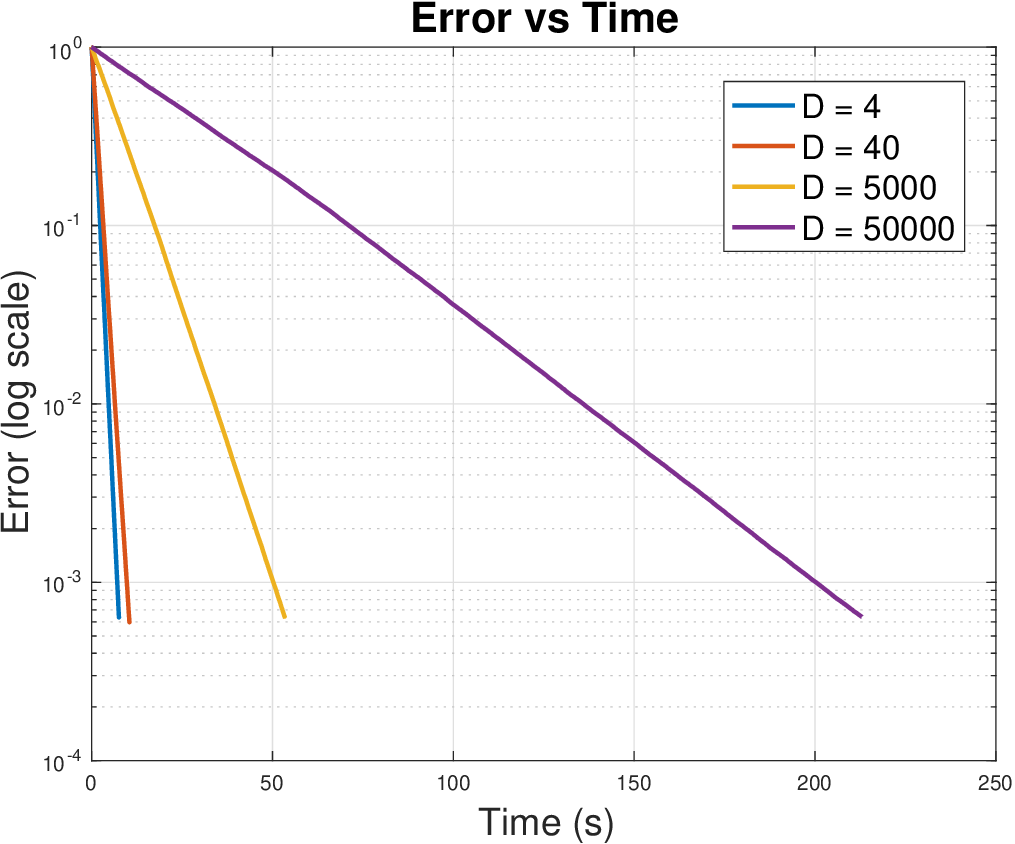} \\
                                                    \small (a) Error vs. iteration & \small (b) Error vs. runtime \\
                                                \end{tabular}
                                                \caption{Empirical convergence of QRK for $\beta=0.01$ and $q=0.5$ with varying $D$. (a) Error vs. iteration; (b) Error vs. runtime. Solid lines: subsampling with replacement; dashed lines: without replacement. }
                                                \label{fig:qrk_error_vs_iter_and_time}
                \end{figure}

    \subsection{Dependence of $D$ on $(T,\beta)$}
    We numerically simulate the impact of $T,\beta$ on the subsample size. To this end,
    we plot the ``error versus iteration'' curves for varying   $(\beta,D)\in \{0.01, 0.06, 0.11\}\times  \{4, 8, 12\}$. 
    When $\bx_{k+1}$ is projected onto a corrupted row, the error can significantly increase and this leads to ``jumps'' in the curves. While we use corruption bounded by $5$ in our experiment, in general, such jump can ruin the entire estimation procedure when the corruption is set arbitrarily large   (see \cref{lem: corrupted error}). Therefore, {\it one has to avoid the occurrence of such jump over the desired number of iterations.} In light of this, our experimental results in \Cref{fig:qrk_beta_effect} lead to the following conclusions that are consistent with our theory:
    \begin{itemize}
        \item  Under a fixed $T=20000$,  we compare \Cref{fig:qrk_beta_effect} (a)--(c) and find that increasing the corruption level $\beta$ necessitates a larger subsample size $D$ to avoid the occurrence of a  jump in error before the completion of $T$ iterations. For instance, under $D=8$, 
        the error decreases over the first $T$ iterations when $\beta = 0.06$, while a jump occurs when $\beta = 0.11$, and we have to use larger $D$ (such as $D=12$) to avoid the jump. This corroborates the $\frac{1}{\log(1/\beta)}$ dependence of $D$ on $\beta$. 
        \item   Under a fixed corruption level $\beta$, we compare the curves within \Cref{fig:qrk_beta_effect} (b) or (c) and find that increasing $T$ will require larger $D$. Specifically, in \Cref{fig:qrk_beta_effect} (c) with $\beta =0.11$, the error curve of $D=8$ decreases over the first $T_1=10000$ and then encounters the first jump, while the curve of $D=12$ decreases over the $T=20000$ iterations. This is consistent with the $\log(T)$ dependence of $D$ on $T$. 
    \end{itemize}
    

    \begin{figure}[htbp]
        \centering
        \begin{tabular}{ccc}
            \includegraphics[width=0.3\textwidth]{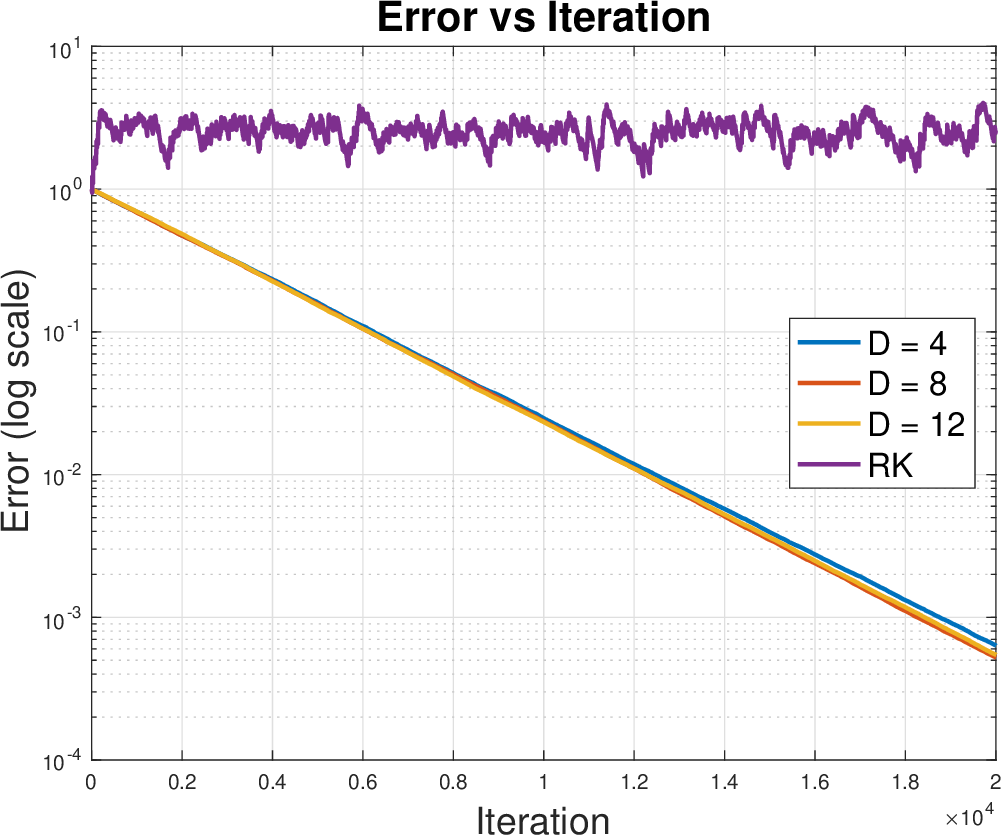} &
            \includegraphics[width=0.3\textwidth]{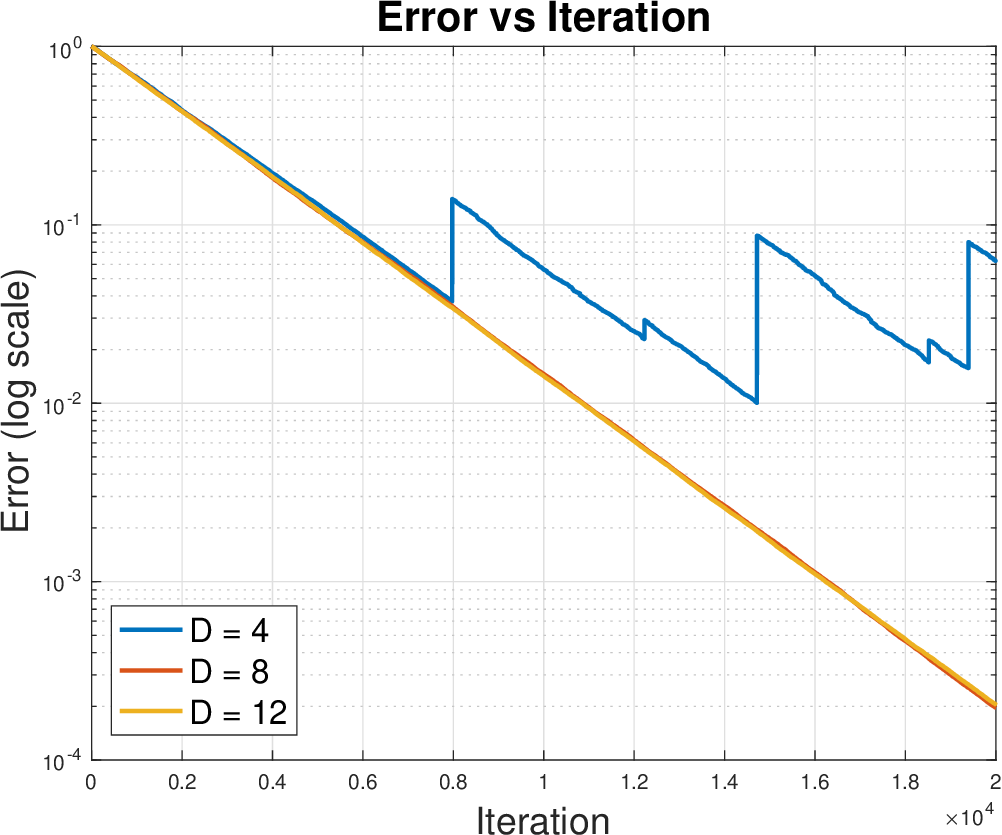} &
            \includegraphics[width=0.3\textwidth]{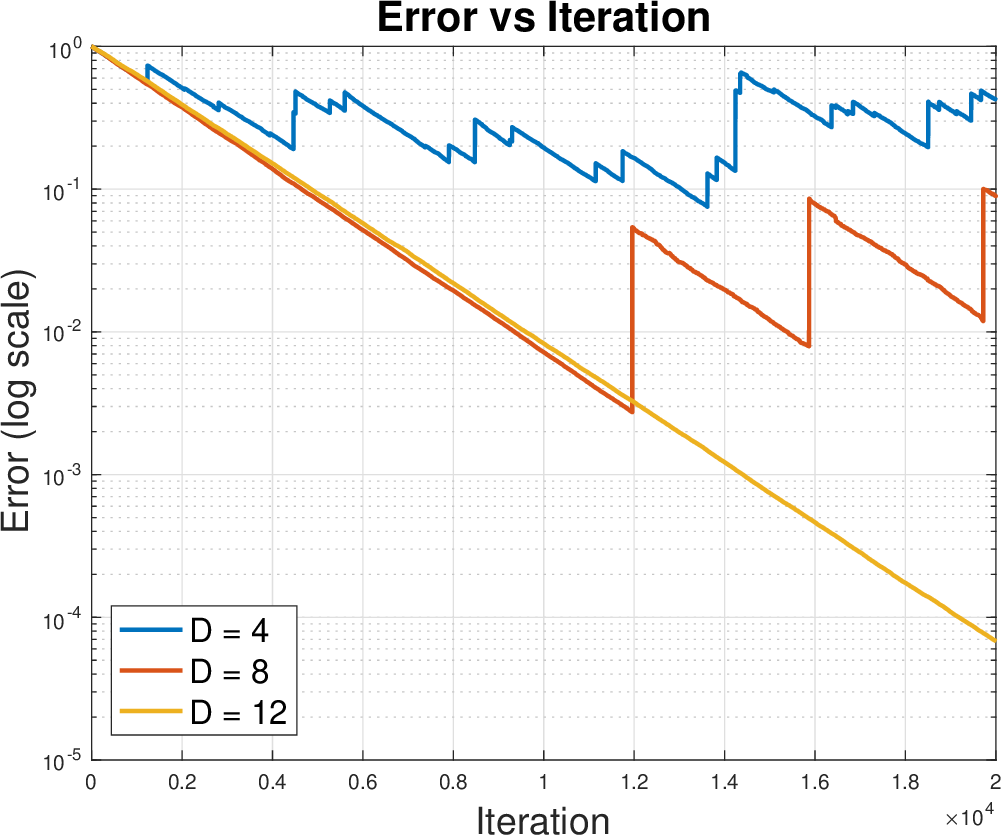} \\
            \small (a) $\beta=0.01$ & \small (b) $\beta=0.06$ & \small (c) $\beta=0.11$
        \end{tabular}
        \caption{Error v.s. iteration for QRK under different corruption levels $\beta$ and subsample sizes $D$.} 
        \label{fig:qrk_beta_effect}
    \end{figure}
  

 \section{Concluding Remarks} \label{sec:conclusion}
 
In this work, we provided a theoretical analysis on the subsample size of the quantile-based randomized Kaczmarz (QRK) method. Our main contribution is to identify (the order of) the minimal subsample size $D$ required for QRK to converge over $T$ iterations. {In particular, we show} that $D = \Theta\big(\frac{\log T}{\log(1/\beta)}\big)$ is both sufficient and necessary, and note that such a subsample size is typically a massive reduction on the previously known $D=\Theta(m)$. Our work thus bridges the gap between the previous theoretical guarantees for QRK, which require full-sample quantiles, and its practical implementations, which only use a very small subsample to compute the quantiles. Numerical experiments corroborate our theoretical findings and confirm that a fairly small subsample size is sufficient for the applications of QRK to solving large corrupted linear systems.

In order to prove our results, we introduced a number of new technical ingredients. These include a two-sided probabilistic bounds on the subsampled quantiles, a stopping time to control $\mathbb{E}\big(\|\bX_T-\bx^*\|^2 1_{\Omega^c}\big)$, and a probabilistic statement on the performance limit of standard RK. We believe that these techniques are of interests to the analysis of (Q)RK and other randomized algorithms. In fact, since its introduction     \cite{haddock_quantilebased_2022}, QRK has attracted a lot of subsequent research interests such as block QRK \cite{cheng2022block,needell2014paved}, reverse  QRK \cite{battaglia2025reverse},  subspace constrained QRK \cite{lok2024subspace}, sparse QRK \cite{zhang2025quantile}, QRK for corrupted tensor linear system \cite{castillo2025quantile,ma2022randomized}, QRK under time-varying noise and corruption \cite{coria2024quantile}, among others. Note that all of these works require quantiles computed from the full sample, and we believe our techniques can be similarly used to substantially reduce the subsample size in these QRK variants. We leave these promising directions for future work.

 In addition to these future directions, we note that while the order of the minimal subsample size was found in our work, multiplicative constants remain unspecified. Another research direction is to derive explicit constants for the upper bound and lower bound on $D$. Particularly, for a  fixed small enough $\beta>0$, it is of great interest to find explicit constants $c_*<C^*$ which are ideally close, such that
\begin{gather}
    \text{QRK with $D\ge C^*\log T$ converges over the first $T$ iterations, with high probability;}\label{succcon}\\\label{failcon}
    \text{QRK with $D\le c_*\log T$ returns a bad estimate $\bx_T$, with high probability.}
\end{gather}
 A further question will be to explore a potential phase transition phenomenon in QRK, that is, whether there exists a pair $(c_*,C^*)$ satisfying $\frac{C^*}{c_*}=1+o(1)$ such that \cref{succcon}--\cref{failcon} hold.


\bibliographystyle{plain}
\bibliography{ref}

\begin{thebibliography}{10}

\bibitem{battaglia2025reverse}
Emeric Battaglia and Anna Ma.
\newblock Reverse quantile-rk and its application to quantile-rk.
\newblock {\em Numerical Linear Algebra with Applications}, 32(3):e70024, 2025.

\bibitem{castillo2025quantile}
Alejandra Castillo, Jamie Haddock, Iryna Hartsock, Paulina Hoyos, Lara Kassab, Alona Kryshchenko, Kamila Larripa, Deanna Needell, Shambhavi Suryanarayanan, and Karamatou~Yacoubou Djima.
\newblock Quantile-based randomized kaczmarz for corrupted tensor linear systems.
\newblock {\em arXiv preprint arXiv:2503.18190}, 2025.

\bibitem{cheng2022block}
Lu~Cheng, Benjamin Jarman, Deanna Needell, and Elizaveta Rebrova.
\newblock On block accelerations of quantile randomized kaczmarz for corrupted systems of linear equations.
\newblock {\em Inverse Problems}, 39(2):024002, 2022.

\bibitem{coria2024quantile}
Nestor Coria, Jamie Haddock, and Jaime Pacheco.
\newblock On quantile randomized kaczmarz for linear systems with time-varying noise and corruption.
\newblock {\em arXiv preprint arXiv:2403.19874}, 2024.

\bibitem{durrett2019probability}
Rick Durrett.
\newblock {\em Probability: theory and examples}, volume~49.
\newblock Cambridge university press, 2019.

\bibitem{haddock_subsampled_2023}
Jamie Haddock, Anna Ma, and Elizaveta Rebrova.
\newblock On subsampled quantile randomized kaczmarz.
\newblock In {\em 2023 59th Annual Allerton Conference on Communication, Control, and Computing (Allerton)}, pages 1--8. IEEE, 2023.

\bibitem{haddock_quantilebased_2022}
Jamie Haddock, Deanna Needell, Elizaveta Rebrova, and William Swartworth.
\newblock Quantile-{Based} {Iterative} {Methods} for {Corrupted} {Systems} of {Linear} {Equations}.
\newblock {\em SIAM Journal on Matrix Analysis and Applications}, 43(2):605--637, June 2022.

\bibitem{herman1993algebraic}
Gabor~T Herman and Lorraine~B Meyer.
\newblock Algebraic reconstruction techniques can be made computationally efficient (positron emission tomography application).
\newblock {\em IEEE transactions on medical imaging}, 12(3):600--609, 1993.

\bibitem{karczmarz1937angenaherte}
Stefan Karczmarz.
\newblock Angenaherte auflosung von systemen linearer glei-chungen.
\newblock {\em Bull. Int. Acad. Pol. Sic. Let., Cl. Sci. Math. Nat.}, pages 355--357, 1937.

\bibitem{lok2024subspace}
Jackie Lok and Elizaveta Rebrova.
\newblock A subspace constrained randomized kaczmarz method for structure or external knowledge exploitation.
\newblock {\em Linear Algebra and its Applications}, 698:220--260, 2024.

\bibitem{ma2022randomized}
Anna Ma and Denali Molitor.
\newblock Randomized kaczmarz for tensor linear systems.
\newblock {\em BIT Numerical Mathematics}, 62(1):171--194, 2022.

\bibitem{needell2010randomized}
Deanna Needell.
\newblock Randomized kaczmarz solver for noisy linear systems.
\newblock {\em BIT Numerical Mathematics}, 50:395--403, 2010.

\bibitem{needell2014paved}
Deanna Needell and Joel~A Tropp.
\newblock Paved with good intentions: analysis of a randomized block kaczmarz method.
\newblock {\em Linear Algebra and its Applications}, 441:199--221, 2014.

\bibitem{steinerberger2021randomized}
Stefan Steinerberger.
\newblock Randomized kaczmarz converges along small singular vectors.
\newblock {\em SIAM Journal on Matrix Analysis and Applications}, 42(2):608--615, 2021.

\bibitem{steinerberger2023quantile}
Stefan Steinerberger.
\newblock Quantile-based random kaczmarz for corrupted linear systems of equations.
\newblock {\em Information and Inference: A Journal of the IMA}, 12(1):448--465, 2023.

\bibitem{strohmer2009randomized}
Thomas Strohmer and Roman Vershynin.
\newblock A randomized kaczmarz algorithm with exponential convergence.
\newblock {\em Journal of Fourier Analysis and Applications}, 15(2):262--278, 2009.

\bibitem{tan_phase_2019}
Yan~Shuo Tan and Roman Vershynin.
\newblock Phase retrieval via randomized {Kaczmarz}: theoretical guarantees.
\newblock {\em Information and Inference: A Journal of the IMA}, 8(1):97--123, March 2019.

\bibitem{wang2015randomized}
Chuang Wang, Ameya Agaskar, and Yue~M Lu.
\newblock Randomized kaczmarz algorithm for inconsistent linear systems: An exact mse analysis.
\newblock In {\em 2015 International Conference on Sampling Theory and Applications (SampTA)}, pages 498--502. IEEE, 2015.

\bibitem{zhang2025quantile}
Lu~Zhang, Hongxia Wang, and Hui Zhang.
\newblock Quantile-based random sparse kaczmarz for corrupted and noisy linear systems.
\newblock {\em Numerical Algorithms}, 98(3):1575--1610, 2025.

\bibitem{zouzias2013randomized}
Anastasios Zouzias and Nikolaos~M Freris.
\newblock Randomized extended kaczmarz for solving least squares.
\newblock {\em SIAM Journal on Matrix Analysis and Applications}, 34(2):773--793, 2013.

\end{thebibliography}

\end{document}